\setlist[enumerate]{leftmargin=.5in}
\setlist[itemize]{leftmargin=.5in}
\newtheorem{theorem}{Theorem}[section]
\newtheorem{proposition}[theorem]{Proposition}
\newtheorem{lemma}[theorem]{Lemma}
\newtheorem{definition}[theorem]{Definition}
\newtheorem{remark}{Remark}[section]
\newcommand\fdem{$\Box$}
\newcommand\cE{{\cal E}}
\newcommand\cH{{\cal H}}
\newcommand\cF{{\cal F}}
\newcommand\cL{{\cal L}}
\newcommand\cN{{\cal N}}
\newcommand\cM{{\cal M}}
\newcommand\cX{{\cal X}}
\newcommand\cV{{\cal V}}
\def\bbr{{\mathbb R}}
\def\text#1{\hbox{#1}}
\def\E{{\bf E}}
\def\P{{\bf P}}
\def\B{{\bf B}}
\def\C{{\bf C}}
\def\H{{\bf H}}
\def\c{{\bf c}}
\def\q{{\bf q}}
\def\u{{\bf u}}
\def\me{\mathrm{e}}
\def\d{\mathrm{d}}
\def\dsl{\displaystyle}
\def\build #1_#2{\mathrel{\mathop{\kern 0pt #1}\limits_{#2}}}
\newcommand{\zs}[1]{{\mathchoice{#1}{#1}{\lower.25ex\hbox{$\scriptstyle#1$}}
{\lower0.25ex\hbox{$\scriptscriptstyle#1$}}}}
\numberwithin{equation}{section}
\begin{document}
\title{
Optimal investment and consumption for pairs trading financial markets on small time interval
}

\author{
Sahar Albosaily\thanks{
Laboratoire de Math\'ematiques Raphael Salem,
 UMR 6085 CNRS- Universit\'e de Rouen,  France 
  and 
 University of Hail, Saudi Arabia, 
 ORCID iD: 0000-0002-5714-7834,
 e-mail:sahar.albosaily@etu.univ-rouen.fr
}
  \, and \, 
 Serguei Pergamenshchikov\thanks{
 Laboratoire de Math\'ematiques Raphael Salem,
 UMR 6085 CNRS- Universit\'e de Rouen Normandie,  France
and
International Laboratory of Statistics of Stochastic Processes and
Quantitative Finance, National Research Tomsk State University,
 e-mail:
Serge.Pergamenshchikov@univ-rouen.fr } 
}

\date{}

\maketitle

\begin{abstract}
  In this paper we consider a pairs trading financial market with the spread of risky assets defined by the Ornstein-Uhlenbeck (OU) process. We implement an optimal strategy for power utility functions for investment/consumption problem. Through the Feynman-Kac (FK) method, we study the Hamilton-Jacobi-Bellman (HJB) equation for this problem. Moreover, the existence and uniqueness has been shown for classical solution for the HJB equation. In addition, the numeric approximation for the solution of the HJB equation has been studied and the convergence rate has been established and it is been found that the convergence rate is extremely explosive.
\end{abstract}

{\bf keywords}
 Optimality, 
Feynman--Kac mapping, 
Hamilton--Jacobi--Bellman equation, 
It\^o formula, 
Brownian motion, 
Ornstein--Uhlenbeck process,
Stochastic processes,
Financial market,
Spread market.

{\bf AMS subject classification}
primary 62P05, secondary 60G05

\section{Introduction}\label{sec:In}

This paper deals with an optimal investment/consumption problem during a fixed
time interval $[0,T]$ for a financial market generated by risky spread assets defined through the Ornstein--Uhlenbeck (OU) processes.
Such problems are of prime interest for  practical investors such as electricity gas markets.
 Also in other sectors like the microstructioe level within the airline industry (see, for example, 
 \cite{CaldeiraMoura2013}) as well it is known in many hedge funds \cite{CaldeiraMoura2013}.
%
Usually, for such model, one uses a dynamical programming method (see, for example, \cite{Karatzasshreve1998}.
%
 Unfortunately, we can not use the Hamilton--Jacobi--Bellman (HJB) analysis method developed for the Black--Scholes market since for the OU model in the HJB equation there is an additional variable corresponding to the risky asset. 
Note that in \cite{BoguslavskyBoguslavskaya2004} for the pure investment problem, they found the HJB solution in explicit form. 
Unfortunately, we can not apply this method to the general investment/consumption problem in view of an additional strongly nonlinear term due to the consumption. 
Moreover, even in the pure investment problem, (see, for example, \cite{BoguslavskyBoguslavskaya2004}) the HJB solution is extremely explosive, i.e.  
it goes to infinity in a squared exponential power ($e^{s^2}$) rate  as the variable corresponding to the risky assets  ($s$) 
goes to infinity in this financial market.
By this reason,  we can not use the analytical tool of Black--Scholes model to  proof  the verification theorem for spread market. 
 %
 %
%
In this paper we develop a new method for the probabilistic analysis of the parabolic PDE.
Similarly to \cite{BerdjanePergamenchtchikov2013}, 
we study the HJB equation  through the Feynman--Kac (FK) representation. To this end
we introduce a special metric space in which the FK mapping is contracted. Taking this into account we show 
the fixed-point theorem for this mapping and we show that the fixed-point solution is the classical solution for the HJB equation in our case. 
Moreover, by using the verification theorem  we find the optimal financial strategies.
 It turned out that the optimal investment and consumption strategies depend on the solution of a nonlinear parabolic partial differential equation. 
 Therefore, to calculate the optimal strategies one needs to study  numerical schemes.

The rest of the paper is organized as follows. 
In \cref{sec:Mm}
we introduce the financial market.
In \cref{sec:MP} 
we define all necessary parameters. 
In \cref{sec:HJB} 
we write the HJB equation. 
In \cref{sec:Mr} 
we state the main results of the paper.
In \cref{sec:PFKm} 
we study the properties of the FK mapping.
In \cref{sec:FP} 
we study the properties of the fixed-point function.
The proofs of the main results are given in \cref{sec:Pr}. 
The corresponding verification theorem is stated in the Appendix  
 with some auxiliary results.



\bigskip
\section{Market model}\label{sec:Mm}

Let 
$ (\Omega, \cF_\zs{T}, (\cF_\zs{t})_\zs{0\le t\le T}, \P)$
be a standard filtered probability space with 
$ (\cF_\zs{t})_\zs{0\le t\le T}$ 
adapted Wiener processes 
$ (W_\zs{t})_\zs{0\le t\le T} $. 
Our financial market consists of one {\em riskless bond}  
$ (\check{S}_\zs{t})_\zs{0\le t\le T}$ 
and {\em risky spread stocks}
$ (S_\zs{t})_\zs{0\le t\le T}$ 
governed by the following equations:
\begin{equation}\label{sec:Mm.1}
\left\{
		\begin{array}{ll}
								\d \check{S}_\zs{t}& = r\check{S}_\zs{t} \d  t, 
			\hfill
								\check{S}_\zs{0} = 1 ,
			\\[2mm]
							 \d  S_\zs{t}& =   - \kappa S_\zs{t} \d  t+ \sigma \d  W_\zs{t},
			\qquad
								S_\zs{0}>0 .
		\end{array}
\right.
\end{equation}
Here the constant
$ \kappa> 0$  
is the market mean-reverting parameter from $ \bbr$ 
and 
$ \sigma>0$ 
is the market volatility.
We assume that the interest rate
 $ r \le \kappa $.
Let now 
$ \check{\alpha}_\zs{t}$ be the number of riskless assets $ \check{S}$ 
and 
$ \alpha_\zs{t}$ be the number of risky assets in the moment $ 0 \le t \le T $,
 and the consumption rate is given by a non negative  integrated function $ (c_\zs{t})_\zs{0\le t\le T}$ \cite{Karatzasshreve1998}.
Thus the wealth process is 
$$
X_\zs{t} = \check{\alpha}_\zs{t} \check{S}_\zs{t}+ \alpha_\zs{t} S_\zs{t}.
$$
Using the self financial principle from 
\cite{Karatzasshreve1998}
we get
\begin{equation}\label{sec:Mm.1+1}
 \d  X_\zs{t} =   \check{\alpha}_\zs{t} \d \check{S}_\zs{t}+ \alpha_\zs{t}  \d  S_\zs{t} -  c_\zs{t}  \d  t.
\end{equation}
We define the financial strategy as
 $$
 \upsilon  =  (\upsilon_\zs{t})_\zs{0  \leq  t  \leq  T}  =  (\alpha_\zs{t}, c_\zs{t})_{0  \leq  t  \leq  T},
 $$
where 
$ c_\zs{t}\ge 0$
 is the consumption rate  
and 
$ \alpha_\zs{t}$ 
 is the investment position in the risky asset.
So, replacing now in \cref{sec:Mm.1+1} the differentials  $  \d  \check{S}_\zs{t}$  and 
$  \d  S_\zs{t}$  by their definitions in \cref{sec:Mm.1} we obtain the differential equation for the wealth process
\begin{equation}\label{sec:Md.3}
 \d  X_\zs{t}^{\upsilon}   =  (rX_\zs{t}^{\upsilon} -  \kappa_\zs{1} \alpha_\zs{t} S_\zs{t} ) \d  t + \alpha_\zs{t} \sigma  \d  W_\zs{t}  -  c_\zs{t}
 \d  t,
\end{equation}
where $ \kappa_\zs{1} = \kappa+r>0$.

\begin{definition} \label{De.sec:Md.3}
The financial strategy $ \upsilon = (\upsilon_\zs{t})_\zs{0  \leq  t  \leq  T}$
is called admissible if this process is adapted and
the equation \cref{sec:Md.3} has a unique strong nonnegative solution.
\end{definition}

\noindent 
We denote by $ \cV$ the set of all admissible financial strategies.
For initial endowment $ x>0$, 
admissible strategy $ \upsilon$ in $ \mathcal{V}$
and
the state process $ \varsigma_\zs{t} = (X^{\upsilon}_\zs{t},S_\zs{t})$ , we introduce for $ 0<\gamma<1$ 
the following value function 
\begin{equation}\label{sec:Md.4}
J(\varsigma, t, \upsilon) : =  \E_\zs{\varsigma, t} \bigg( \int_\zs{t}^T c^{\gamma}_\zs{u}  \d  u + \varpi (X_\zs{T}^\upsilon)^{\gamma}\bigg),
\end{equation}
where $ \varpi > 0$ is some fixed constant, $ \E_\zs{\varsigma, t}$ is the conditional expectation with respect to $ \varsigma_\zs{t}= \varsigma = (x,s)$. We set $J(\varsigma, \upsilon)= J(\varsigma, 0, \upsilon)$.
\\
Our goal is to maximize the value function \cref{sec:Md.4}, i.e.
\begin{equation}\label{eq: supvalue}
\sup_\zs{ \upsilon \in \cV} J(\varsigma, \upsilon )
\end{equation}
To do this we use the dynamical programming method. Therefore, we need to study the problem \cref{sec:Md.4} for any 
 $ 0 \le t \le T$.
 \begin{remark}
 The coefficient $0 < \varpi < \infty$, explains the investor preference between consumption and pure investment problem. Therefore, we did not consider the case where $\varpi = 0$, as in reality the trader is more interested in the terminal wealth than consumption. 
 \end{remark}
\bigskip

\section{Main Parameters}\label{sec:MP}

First we introduce the following ordinary differential equation
\begin{equation}\label{eq_g}
g'(t) - 2 \gamma_\zs{2} g(t)+ \gamma_\zs{1} g^2(t) + \gamma_\zs{3} = 0\quad\mbox{and}\quad g(T) = 0,
\end{equation}
where 
$$
  \displaystyle\gamma_\zs{1} = \frac{\sigma^2}{1 -  \gamma},\quad
\gamma_\zs{2} = \frac{\gamma \kappa_\zs{1}}{1 -  \gamma}+ \kappa ,
\quad
\kappa_\zs{1} =   \kappa+r
\quad
\mbox{and}
\quad
 \gamma_\zs{3} = \frac{\gamma \kappa_\zs{1}^2}{(1 -  \gamma) \sigma^2}  .
 $$
 One can check directly that 
\begin{equation}\label{funct_g}
g(t) =  \check{\gamma}_\zs{2} -  \vartheta -  \frac{2 \vartheta(\check{\gamma}_\zs{2} -  \vartheta)}{ \me^{\omega (T - t) }(\check{\gamma}_\zs{2}+\vartheta)  -  \check{\gamma}_\zs{2}+ \vartheta }  ,
\end{equation}
where 
$ \dsl\check{\gamma}_\zs{2} =  \gamma_\zs{2}/\gamma_\zs{1}, 
\quad
 \check{\gamma}_\zs{3} = \gamma_\zs{3}/\gamma_\zs{1},
 \quad
\omega =   2 \vartheta \gamma_\zs{1}
\quad
\mbox{and}
\quad 
\vartheta =  \sqrt{\check{\gamma}^2_\zs{2} -  \check{\gamma}_\zs{3}} $ .
\\[3mm]
Note that $g(t)$ is decreasing, i.e., $\max_\zs{0 \le t \le T} g(t)= g(0)$.
Taking into account that $r \le \kappa$,
we get 
$\check{\gamma}^2_\zs{2} \ge \check{\gamma}_\zs{3}$. Furthermore, we set
\begin{multline}\label{eq:B0.B1}
\B_\zs{1} = \frac{1}{\gamma_\zs{1}}
\left( 
	\sqrt{\frac{\pi}{2T}}
	+
	\sqrt{\frac{\vert\pi - 4T \sigma^2 \varpi^{\frac{1}{\gamma -1}}\vert}{2T}}
\right)
\quad
\mbox{and}
\quad
\B_\zs{0} = 
\left( 
		q_\zs{1} +\frac{\gamma_\zs{1}}{2} \bold{B}^2_\zs{1} 
\right)T 
 ,
\end{multline}
where
$ \dsl{q_\zs{1} = g(0)  \sigma^2 / 2 + r \gamma +  (1 -  \gamma)  \varpi^{\frac{1}{\gamma  - 1}}}$. 
We denote by $ C^{1,0}_\zs{+} \big(\bbr \times [0,T] \big)$
the set of all positive functions from $ C^{1,0} \big(\bbr \times [0,T] \big)$, i.e. the set of all 
$ \bbr \times [0,T] \to \bbr_\zs{+}$ continuous partial derivatives with respect to the first variable $s$ and continuous functions in the second variable $t$. 
Now
we introduce the following set
\begin{equation}\label{sec:FKm.1}
\cX = \Big\{ 
						h \in C^{1,0}_\zs{+} \big( \bbr \times [0,T]\big)
		: \quad
						 \sup_\zs{s, t} h(s, t)  \leq  \bold{B_\zs{0}}
		,\quad
						 \sup_\zs{s, t} |h_\zs{s}(s, t)| \leq  \bold{B_\zs{1}} \Big\}.
\end{equation}
For some $ \varkappa >1$,  which we will precise later, we introduce the metric in this space
\begin{equation}\label{sec:FKm.2}
		\rho(f,h) = \sup_\zs{\substack{s \in \mathbb{R} , 
																0  \leq  t  \leq  T}}
								 \me^{- \varkappa (T - t)}\Upsilon_\zs{f,h}(s,t)
,
\end{equation}
where $\Upsilon_\zs{f,h}(s,t) = |h(s, t) -  f(s, t)|+ |h_\zs{s}(s, t) -  f_\zs{s}(s,t)|$.
Now for any $ 0\le t\le T$ and $ s\in\bbr$ we introduce the the process $ (\eta^{s, t}_\zs{u})_\zs{t\le u\le T}$
as the solution of the following stochastic differential equation
\begin{equation}\label{sec:FKm.4}
\d  \eta^{s, t}_\zs{u}
 = g_\zs{1}(u) \eta^{s, t}_\zs{u}  \d  u + \sigma \d  \check{W}_\zs{u},
\qquad
 \eta^{s, t}_\zs{t} = s,
\end{equation}
where 
$ g_\zs{1}(t) =  \gamma_\zs{1}  g(t) -  \gamma_\zs{2}  $
and
$ \dsl{(\check{W}_\zs{u})_\zs{u\ge 0}}$ is a standard Brownian motion. It is clear that
$
 \eta^{s, t}_\zs{u}
 \sim  \mathcal{N} (s \, \mu(u,t), \sigma_\zs{1}^2(u,t)) ,
$
with
\begin{align}\label{eqn:mu&sigma}
\mu(u,t) =  \exp\bigg\{ \dsl\int^u_\zs{t} g_\zs{1}(\nu)  \d  \nu \bigg\}
				\quad
				\mbox{and}
				\quad
 \sigma_\zs{1}^2(u,t) = \sigma^2 \int^u_\zs{t} \mu^2(u,z)  \d  z .
 \end{align}
Now  for any 
$ h\in\cX$  
we define the FK  mapping  as
\begin{equation}\label{sec:FKm.3}
\cL_\zs{h}(s,t) =   \int^T_\zs{t}\E  \Psi_\zs{h}(\eta^{s, t}_\zs{u}, u)  \d u,
\end{equation}
where 
$ \dsl \Psi_\zs{h}(s, t)= \Gamma_\zs{0}\Big(s, t, h( s, t), h_\zs{s}( s, t)\Big)$ 
and
\begin{equation} \label{eqn: GAMMA0}
\Gamma_\zs{0}(s, t ,y_\zs{1},y_\zs{2})  =
\frac{\sigma^2 y^2_\zs{2}}{2(1 -  \gamma)} + \frac{\sigma^2 g(t)}{2} + r \gamma +(1 -  \gamma) \varpi_\zs{1}G(s, t,y_\zs{1}).
\end{equation}
Here, the coefficient 
$ \varpi_\zs{1}= \varpi^{- 1/(1- \gamma)}$
 and
\begin{equation}\label{sec:FKm.66}
G(s,t, y) = \exp \bigg\{ - \frac{1}{1 -  \gamma} \bigg(\frac{s^2}{2} g(t) +y \bigg)\bigg\}.
\end{equation}
In this paper we assume that $T < T_\zs{0}$ and
\begin{equation}\label{eq: T0}
T_\zs{0}=
 \min 
 		\bigg(\frac{\kappa (1- \gamma)}{2 (3+ \gamma) \kappa_\zs{2}} 
													  					, \frac{\gamma (1- \gamma)}{ (3+ \gamma)(\gamma+1) \sigma^2 g(0)} ,
 		\frac{\pi}{4 \sigma^2}
 		\bigg) ,
\end{equation}
where $\kappa_\zs{2}= \kappa^2_\zs{1} \Big(1/\sigma^2+ 1/2+  g(0)/\kappa_\zs{1} \Big). $

\begin{remark}\label{sec:FKm.1+++1}
Note that we use the FK  mapping
to study the HJB equation which will be defined in the next section.
\end{remark}
%
%
\section{Hamilton--Jacobi--Bellman equation }\label{sec:HJB}

Denoting by  $ \varsigma_\zs{t} = (X_\zs{t},S_\zs{t})$,
we can rewrite  equations \cref{sec:Mm.1} and \cref{sec:Md.3}
as,
\begin{equation}\label{sec:HJB.1}
 \d  \varsigma_\zs{t} = a(\varsigma_\zs{t}, \upsilon_\zs{t})  \d  t +b (\varsigma_\zs{t}, \upsilon_\zs{t})  \d  W_\zs{t},
\end{equation} 
where
$$
a(\varsigma, \u)  =  \begin{pmatrix} \ r x - \kappa_\zs{1} \alpha s  - c \\  -  \kappa s \end{pmatrix},
\quad 
b(\varsigma, \u) =  \begin{pmatrix} \ \alpha \sigma \\ \sigma \end{pmatrix}
\quad
\mbox{and}
\quad
\u = (\alpha,c)
.
$$
We introduce the Hamilton function, for any 
 $$
 q = \begin{pmatrix}
q_\zs{1} \\ q_2 \end{pmatrix}
  ,\quad
  M   =  \begin{pmatrix}
 M_{11} \ \ M_{12} \\ M_{21} \ \ M_{22} 
 \end{pmatrix} 
\quad
\mbox{and} 
\quad
 0 \leq  t  \leq  T,
 $$
 we set,
\begin{equation} \label{sec:HJB.2}
 H(\varsigma, t, q, M): =  \sup_{\u \in \Theta} H_0(\varsigma, t, q, M, \u), \qquad \Theta \in \mathbb{R} \times \mathbb{R}_+,
\end{equation}
 where
 $
  H_0( \varsigma, t, q, M, \u): =  a'(\varsigma, t, \u) q+ \frac{1}{2} \textbf{tr}[bb'(\varsigma, t, \u) M] +c^{\gamma},
$
with the prime $ '$ here denotes the transposition.
 \\
In order to find the solution to the value function \cref{eq: supvalue} we need to solve the HJB equation  which is given by
\begin{align} \label{Hamilton--Jacobi--Bellman equation}
\begin{cases}
 z_\zs{t} (\varsigma, t)+H(\varsigma, t, \partial z(\varsigma, t), \partial^2 z(\varsigma, t)) = 0 , 
 \qquad
  t \in [0,T] ,
  \\[2mm]
 z(\varsigma, T) = \varpi x^{\gamma}, 
 \hfill 
 \varsigma \in \mathbb{R}^2 ,
\end{cases}
\end{align}
where 
$$
\partial z(\varsigma, t) = \begin{pmatrix}
z_\zs{x} \\ z_\zs{s} \end{pmatrix} 
\qquad
 \mbox{and}
  \qquad
 \partial^2 z(\varsigma, t) = \begin{pmatrix}
z_{xx} \quad z_{xs} \\ z_{sx} \quad z_{ss} \end{pmatrix}.$$
Moreover, here
 \begin{align*}
H_0(\varsigma, t, q, M, \u) =  & \frac{\alpha^2 \sigma^2}{2} M_{11} +(  \sigma^2 M_{12} -  \kappa_\zs{1} s q_\zs{1}) \alpha + \frac{1}{2} \sigma^2 M_{22} 
\\[2mm]
&  + r x q_\zs{1} -  \kappa s q_2  -  c q_\zs{1} +c^\gamma.
 \end{align*}
From \cref{sec:HJB.2} we find that for $ q_\zs{1}>0$
 \begin{align} \label{sec:HJB.3}
 \alpha_\zs{0}(s,q,M)  = \frac{\kappa_\zs{1} s q_\zs{1}}{\sigma^2 M_{11}} -  \frac{M_{21}}{M_{11}} \quad \mbox{and} \quad
 c_\zs{0}(s,q,M) = (\frac{q_\zs{1}}{\gamma})^{\frac{1}{\gamma - 1}}.
 \end{align}
 By substituting these conditions into the HJB equation \cref{Hamilton--Jacobi--Bellman equation} for the value function, we obtain the following nonlinear PDE.  
 \begin{align} \label{sec:HJB.4}
 \begin{split}
  z_\zs{t}(\varsigma, t) +\frac{1}{2} \frac{( \sigma^2 z_{xs}  -  \kappa_\zs{1}s z_\zs{x})^2}{\sigma^2 |z_\zs{xx}|}+ \frac{\sigma^2 z_\zs{ss}}{2} +rx z_\zs{x}  - \kappa s z_\zs{s} \\
 + (1 - \gamma) (\frac{z_\zs{x}}{\gamma})^{\frac{\gamma}{\gamma - 1}} = 0 ,  
 \end{split}
 \end{align}
 where $ z(\varsigma, T) = \varpi x^{\gamma}$. 
\\
 To study this equation we use the following form for the solution
 \begin{equation}\label{HBJ_form++}
 z( x, s,t) =  \varpi x^\gamma
 U(s,t)
 \quad\mbox{and}\quad
 U(s,t) = \exp\bigg\{\frac{s^2}{2} g(t)+ Y(s,t) \bigg\}.
 \end{equation}
 The function $g(.)$ is defined in \cref{funct_g}, and 
 \begin{align} \label{sec:HJB.0}
\begin{cases}
Y_\zs{t}(s,t)+ \frac{1}{2}\sigma^2 Y_{ss}(s,t)+ s g_\zs{1}(t)Y_\zs{s}(s,t)+\Psi_\zs{Y}(s,t) = 0,
\\[2mm]
Y(s,T) = 0,
\end{cases}
\end{align} 
where $ \Psi_\zs{Y}(s,t)$ is given in \cref{eqn: GAMMA0}
and the function $ g_\zs{1}(.)$ is defined in
 \cref{sec:FKm.4}.
 \\
As we will see later, that the equation \cref{sec:HJB.0} has a solution in $ \C^{2,0}(\bbr \times [0,T])$
which can be represented as a fixed point for the FK mapping 
\begin{equation}  \label{sec:fixed_point_1}
h(s,t) =  \E \int^T_\zs{t} \Psi_\zs{h}( \eta^{s, t}_\zs{u}, u)  \d  u = \cL_\zs{h}(s,t)
\,.
\end{equation}
Using the solution by equation \cref{HBJ_form++}, we define the functions 
 \begin{align}\label{sec:alpha_c_00}
 \begin{split}
\check{\alpha}_\zs{0} \big( \varsigma, t \big)
		& = 
 		\frac{\kappa_\zs{1}  s   z_\zs{x}(\varsigma, t) }{\sigma^2  z_\zs{xx}(\varsigma, t)} -  \frac{ z_{xs}(\varsigma, t)}{ z_\zs{xx}(\varsigma, t)}
 		 =  
 		\check{\beta}(s, t)  x,
\\[3mm]  
\check{c}_\zs{0}(\varsigma, t)
		&
 = 
\bigg(\frac{ z_\zs{x}(\varsigma, t)}{\gamma} \bigg)^{\frac{1}{\gamma - 1}}
=  \check{G}(s,t)  x,
\end{split}
\end{align}
where 
$$
\check{\beta}(s,t)=\frac{1}{1- \gamma} \Big( s g(t)+ Y_\zs{s}(s,t)- \frac{\kappa_\zs{1}}{\sigma^2} s \Big)
\quad
\mbox{and}
\quad
\check{G}(s,t)= \varpi^{\frac{1}{\gamma -1}}  G(s, t, Y(s, t)).
$$
Now we set the following stochastic equation to define the optimal wealth process, i.e., we set 
\begin{equation}\label{eq:dXt*}
\d X^*_\zs{t}= a^*(t)  X^*_\zs{t}  \d t 
+
b^*(t)  X^*_\zs{t}  \d W_\zs{t} ,
\end{equation}
where 
$
\quad  a^*(t)= r- \kappa_\zs{1} S_\zs{t} \check{\beta}(S_\zs{t}, t)-  \check{G}(S_\zs{t},t)
\qquad
\mbox{and}
\qquad
b^*(t)= \sigma  \check{\beta}(S_\zs{t}, t).
$
\\
By It\^o formula we can obtain that 
$$
X^*_\zs{t}= x  \exp \bigg\{ \int^t_\zs{0} a^*(u) \d u \bigg\}  \cE_\zs{0, t}(b^*) \,.
$$
Using the stochastic differential equation \cref{eq:dXt*} we define the optimal strategies:
 \begin{align} \label{opt. stgy.11}
\alpha_\zs{t}^*  
 =   
 \check{\alpha}_\zs{0}(\varsigma^*_\zs{t}, t) 
\quad 
\mbox{and} 
\quad
c_\zs{t}^* 
 =   
\check{c}_\zs{0}(\varsigma^*_\zs{t}, t)  ,
\end{align}
where $ \varsigma^*_\zs{t}= (X^*_\zs{t}, S_\zs{t})$ and $X^*_\zs{t}$ is defined in \cref{eq:dXt*}. 
 \\
\begin{remark}
 Note,  the main difference in the HJB  equation \cref{sec:HJB.4} from the one in \cite{BoguslavskyBoguslavskaya2004}  is the last nonlinear term as we see we can not use the solution method from \cite{BoguslavskyBoguslavskaya2004}. One can check that the solution for pure investment problem from \cite{BoguslavskyBoguslavskaya2004} can be obtained in \cref{opt. stgy.11} as $\varpi \rightarrow \infty$.
 \end{remark}

\section{Main results}\label{sec:Mr}

First we study the HJB equation.
\begin{theorem}\label{sec:solY}
Assume that $ 0 < T <T_\zs{0}$, with $ T_\zs{0}$ is given in \cref{eq: T0}, then equation 
\cref{Hamilton--Jacobi--Bellman equation}
 is the solution defined by 
 \cref{HBJ_form++}, 
 where $ Y$ is the unique solution of 
 \cref{sec:HJB.0}
  in $ \cX$ 
  and is the fixed point for the FK mapping, i.e., 
$
 Y=\cL_\zs{Y}.
$
\end{theorem}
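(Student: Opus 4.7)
\textbf{Proof plan for Theorem \ref{sec:solY}.}

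The plan is to reduce the nonlinear PDE \cref{sec:HJB.0} to a fixed point problem $Y=\cL_Y$ for the Feynman--Kac mapping \cref{sec:FKm.3} on the metric space $(\cX,\rho)$, solve it by Banach's contraction principle, upgrade the fixed point to a classical solution, and finally substitute back into \cref{HBJ_form++} to recover a classical solution of the HJB equation. The key technical quantity is the smallness condition $T<T_\zs{0}$, which controls both the stability of the ball $\cX$ under $\cL$ and the contraction constant via the parameter $\varkappa$.

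First I would verify the \emph{invariance} $\cL(\cX)\subseteq \cX$. Given $h\in\cX$, the bound $\|h\|_\infty\le \B_\zs{0}$ and $\|h_\zs{s}\|_\infty\le \B_\zs{1}$ must be transported through $\Psi_\zs{h}$ and the Gaussian expectation $\E\Psi_\zs{h}(\eta^{s,t}_\zs{u},u)$. Since $\Psi_\zs{h}=\Gamma_\zs{0}(\cdot,\cdot,h,h_\zs{s})$ is dominated by $\tfrac{\sigma^2}{2(1-\gamma)}\B_\zs{1}^2+\tfrac{\sigma^2}{2}g(0)+r\gamma+(1-\gamma)\varpi_\zs{1}$ (the exponential factor $G$ being $\le 1$ since $g\ge 0$ and one can absorb the remainder into $q_\zs{1}$), integrating over $[t,T]$ yields $\cL_\zs{h}\le \B_\zs{0}$ provided the definition of $\B_\zs{0}$ in \cref{eq:B0.B1} is used. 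For the derivative bound, I would differentiate \cref{sec:FKm.3} under the expectation using the standard smoothness of the transition density of the Gaussian process $\eta^{s,t}_\zs{u}$; the resulting $\partial_\zs{s}\cL_\zs{h}$ is of the form $\int_\zs{t}^T\E[\Psi_\zs{h}(\eta,u)\mu(u,t)\eta/\sigma_\zs{1}^2(u,t)]\,\d u$ plus a term from $G$, which, after using $\sigma_\zs{1}^2(u,t)\gtrsim \sigma^2(u-t)$ and a Gaussian $L^1$-computation giving the $\sqrt{\pi/(2T)}$ factor, is bounded by $\B_\zs{1}$ from \cref{eq:B0.B1}; this is where the first and third terms in $T_\zs{0}$ enter.

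Next I would show \emph{contraction}. For $f,h\in\cX$, the difference $\Upsilon_\zs{f,h}$ is Lipschitz in $(y_\zs{1},y_\zs{2})$ with constants controlled by $\B_\zs{0},\B_\zs{1},g(0),\kappa_\zs{2}$, using that $G$ in \cref{sec:FKm.66} is globally Lipschitz on $\cX$ (since $|G|\le 1$ and the factor in front of $s^2$ is nonnegative), so one obtains $|\Psi_\zs{f}(s,u)-\Psi_\zs{h}(s,u)|\lesssim \Upsilon_\zs{f,h}(s,u)$ uniformly in $s$. Multiplying by $\me^{-\varkappa(T-t)}$ and passing to the sup gives
\begin{equation*}
\rho(\cL_\zs{f},\cL_\zs{h})\le C(T_\zs{0})\int_\zs{t}^T \me^{-\varkappa(u-t)}\,\d u \; \rho(f,h)\le \frac{C(T_\zs{0})}{\varkappa}\rho(f,h),
\end{equation*}
and the analogous estimate for the $s$-derivative, using the Gaussian tail bounds for $\mu,\sigma_\zs{1}$, produces the same $1/\varkappa$ factor. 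Choosing $\varkappa$ large enough (and using the second inequality defining $T_\zs{0}$ to keep the Lipschitz constant of $\Psi_\zs{h}$ under control through $\gamma_\zs{1}, g(0)$), we obtain a strict contraction on the complete metric space $(\cX,\rho)$. Banach's theorem then yields a unique $Y\in\cX$ with $Y=\cL_\zs{Y}$.

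Finally I would establish the \emph{regularity} and \emph{HJB verification}. Since $\Psi_\zs{Y}$ is continuous and bounded on $\bbr\times[0,T]$ and the driving SDE \cref{sec:FKm.4} has smooth (Gaussian) transition densities, standard Feynman--Kac regularity (parabolic smoothing by the Gaussian semigroup, together with the bound on $Y_\zs{s}$ from $\cX$) upgrades $Y=\cL_\zs{Y}$ to $Y\in C^{2,0}(\bbr\times[0,T])$, and the infinitesimal generator of $\eta^{s,t}$ applied to $\cL_\zs{h}$ shows that $Y$ solves \cref{sec:HJB.0} classically with $Y(\cdot,T)=0$. Substituting \cref{HBJ_form++} into \cref{sec:HJB.4} and using the ODE \cref{eq_g} for $g$ to cancel the $s^2$-coefficient and the definition of $\Psi_\zs{Y}$ to match the remaining terms, one checks that $z(x,s,t)=\varpi x^\gamma U(s,t)$ solves the HJB equation \cref{Hamilton--Jacobi--Bellman equation} with terminal data $\varpi x^\gamma$. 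The main obstacle is the simultaneous control of the invariance and contraction with the \emph{same} constants $\B_\zs{0},\B_\zs{1},\varkappa$: the three min-terms in $T_\zs{0}$ are precisely tuned so that the Lipschitz constant of $G$ on $\cX$, the $s$-derivative estimate, and the Gaussian integrability all fit; tracking these explicit constants carefully is the delicate part of the argument.
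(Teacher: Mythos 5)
Your fixed-point construction (invariance of $\cX$, contraction in the metric $\rho$ for large $\varkappa$, Banach's theorem) reproduces what the paper does in Propositions \ref{Pr.sec:FKm.1}--\ref{Pr.sec:FKm.4}, and your final substitution into \cref{Hamilton--Jacobi--Bellman equation} via \cref{HBJ_form++} and the Riccati equation \cref{eq_g} is the same computation the paper performs in Section \ref{sec:HJB}. The gap is in your regularity step. You assert that because $\Psi_\zs{Y}$ is \emph{continuous and bounded}, ``standard Feynman--Kac regularity'' upgrades the fixed point $Y=\cL_\zs{Y}$ to a classical $C^{2,0}$ solution of \cref{sec:HJB.0}. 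That is not true in general: for a parabolic equation with source term, boundedness and continuity of the source do not suffice for the Duhamel/Feynman--Kac integral to be twice differentiable in $s$ and once in $t$; one needs (local) H\"older continuity of the source in both variables. Here $\Psi_\zs{Y}=\Gamma_\zs{0}(s,t,Y,Y_\zs{s})$ depends on $Y_\zs{s}$, which membership in $\cX$ only bounds — it says nothing about the modulus of continuity of $Y_\zs{s}$. Establishing that $Y_\zs{s}$ is H\"older in $s$ and that $Y, Y_\zs{s}$ are H\"older in $t$ is precisely the content of Propositions \ref{prop: h/ s2-s1} and \ref{Pro:h.hs(t2-t1)}, which occupy all of Section \ref{sec:FP} and which your proposal neither proves nor identifies as necessary. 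You locate the ``delicate part'' in the tuning of the constants $\B_\zs{0},\B_\zs{1},\varkappa$, but the constants are routine; the real difficulty is this H\"older bootstrap.

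The paper also takes a different (and safer) route for the identification itself: rather than differentiating $\cL_\zs{Y}$ twice under the integral sign, it freezes the source, considers the \emph{linear} Cauchy problem \cref{eq: Y_t} with known right-hand side $\Psi_\zs{h}$, invokes the Ladyzhenskaya--Solonnikov--Uraltseva existence theorem (\cref{CaPrt00 theorem}, whose H\"older hypothesis $C_\zs{5}$ is exactly what Section \ref{sec:FP} supplies) to obtain a bounded classical solution, and then shows by It\^o's formula, localization with the stopping times $\tau_\zs{n}$, and monotone/dominated convergence that this classical solution admits the representation $\E\int_\zs{t}^T\Psi_\zs{h}(\eta^{s,t}_\zs{u},u)\,\d u=\cL_\zs{h}=h$. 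This last identification argument is absent from your plan. To make your proposal complete you would need to either carry out the direct second-order differentiation of the Gaussian integral (which again requires the H\"older estimates on $\Psi_\zs{Y}$) or adopt the paper's linearize--solve--identify scheme.
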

\begin{theorem}\label{sec: optimal 3}
Assume that $0<T<T_\zs{0}$,
 Then the optimal value of 
 $ J(t, \varsigma, \upsilon)$
  is given by
 $$
 \max_\zs{\upsilon \in \cV} J(\varsigma, t, \upsilon) =  J(\varsigma, t, \upsilon^*) =  \varpi x^\gamma U(s,t) ,
 $$
 where the optimal control 
 $ \upsilon^* =  (\alpha^*, c^*)$
  for all 
  $ 0  \leq  t  \leq  T$
   is given in \cref{opt. stgy.11} 
   with the function $ Y$ defined in \cref{sec:fixed_point_1}. 
 The optimal wealth process 
 $ (X_\zs{t}^*)_\zs{0  \leq  t  \leq  T}$ 
  is the solution to \cref{eq:dXt*}.
 \end{theorem}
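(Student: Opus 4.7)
The plan is to invoke a verification principle (presumably the one alluded to in the Appendix) on top of the existence result from Theorem~\ref{sec:solY}. By that theorem, the function
\begin{equation*}
z(x,s,t) = \varpi x^\gamma U(s,t), \qquad U(s,t) = \exp\bigl\{\tfrac{1}{2} s^2 g(t) + Y(s,t)\bigr\},
\end{equation*}
is a classical $C^{2,1}$ solution of the HJB equation \cref{Hamilton--Jacobi--Bellman equation} on $\bbr^2\times[0,T]$ with terminal datum $\varpi x^\gamma$, where $Y\in\cX$ is the FK fixed point. Because the candidate feedback maps $\check{\alpha}_\zs{0}, \check{c}_\zs{0}$ in \cref{sec:alpha_c_00} are obtained by substituting the partial derivatives of $z$ into the pointwise maximizers $(\alpha_\zs{0},c_\zs{0})$ from \cref{sec:HJB.3}, they attain the supremum defining the Hamiltonian $H$ at every $(\varsigma,t)$. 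The proof then splits into admissibility of $\upsilon^*=(\alpha^*,c^*)$ and a supermartingale/martingale verification for $z$ along arbitrary/optimal trajectories.

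For admissibility, I would exploit the explicit representation $X^*_\zs{t}= x\exp\{\int_\zs{0}^t a^*(u)\d u\}\cE_\zs{0,t}(b^*)$ stated right after \cref{eq:dXt*}. This is manifestly positive, and the coefficients $a^*,b^*$ are well defined and locally bounded because the membership $Y\in\cX$ yields $|Y|\le \B_\zs{0}$ and $|Y_\zs{s}|\le \B_\zs{1}$ uniformly (so $\check\beta$ is linear in $s$ and $\check G$ is a Gaussian-type exponential), while $S_\zs{t}$ has Gaussian moments of every order. Together with $c^*_\zs{t}=\check G(S_\zs{t},t)X^*_\zs{t}\ge 0$ and adaptedness, this gives $\upsilon^*\in\cV$.

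For the verification step, I would apply the It\^o formula to $z(X^\upsilon_\zs{t},S_\zs{t},t)$ for an arbitrary $\upsilon\in\cV$. Writing $\varsigma^\upsilon_\zs{t}=(X^\upsilon_\zs{t},S_\zs{t})$ and combining with the HJB equation one obtains that
\begin{equation*}
M^\upsilon_\zs{t} := z(\varsigma^\upsilon_\zs{t},t) + \int_\zs{0}^t c_\zs{u}^{\gamma}\d u
\end{equation*}
has non-positive drift (since $z_\zs{t}+H_\zs{0}(\varsigma,t,\partial z,\partial^2 z,\u)\le 0$ for every admissible control value $\u$), with equality at $\u=(\check\alpha_\zs{0},\check c_\zs{0})$. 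Hence $M^\upsilon$ is a local supermartingale, and a local martingale for $\upsilon=\upsilon^*$. Localising by a sequence $\tau_\zs{n}\uparrow T$, taking expectations and using the terminal value $z(\varsigma,T)=\varpi x^\gamma$ yields $z(\varsigma,0)\ge J(\varsigma,0,\upsilon)$, with equality at $\upsilon^*$. Running the same argument from any time $t\in[0,T]$ gives the claimed identity $\max_\zs{\upsilon\in\cV}J(\varsigma,t,\upsilon)=\varpi x^\gamma U(s,t)$.

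The main obstacle is the passage $n\to\infty$: one must produce uniform integrability of $z(\varsigma^\upsilon_\zs{\tau_\zs{n}},\tau_\zs{n})$ and of the consumption integral, which amounts to moment control of $\varpi(X^\upsilon_\zs{t})^\gamma\exp\{\tfrac{1}{2}g(t)S_\zs{t}^2+Y(S_\zs{t},t)\}$. Here the boundedness of $Y$ on $\cX$ reduces matters to estimating $\E\exp\{\tfrac{1}{2}g(0)S_\zs{t}^2\}(X^\upsilon_\zs{t})^\gamma$, which is finite under the standing assumption $T<T_\zs{0}$; indeed, the third component $T<\pi/(4\sigma^2)$ in \cref{eq: T0} is precisely what controls the Gaussian tail of $S_\zs{t}$ against the squared-exponential blow-up of $U$. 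This integrability analysis, most likely encapsulated by the appendix's verification theorem, is where the technical work concentrates; once it is in place, the supermartingale/martingale dichotomy delivers both optimality and the explicit form of the value function and optimal wealth $X^*_\zs{t}$.
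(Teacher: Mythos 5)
Your proposal follows essentially the same route as the paper: there, too, the HJB solution $z=\varpi x^{\gamma}U$ supplied by \cref{sec:solY} is fed into the verification theorem of the Appendix, with the feedback maximizer condition, admissibility of $\upsilon^{*}$ via the explicit exponential form of $X^{*}$, and the uniform-integrability condition handled by \cref{sec: verif 1}, which bounds $\sup_{\tau}\E\, z^{\check{\delta}}(\varsigma_{\tau},\tau)$ for some $\check{\delta}>1$ through exponential moment estimates valid for $T<T_{0}$. The only minor slip is your attribution of the integrability to the third component $\pi/(4\sigma^{2})$ of the minimum in \cref{eq: T0}; the moment estimates in \cref{sec: verif 1} actually rest on the first two components, while the third is used elsewhere (for the Feynman--Kac mapping).
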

\noindent
Let us now define the approximation sequence 
$ (h_\zs{n})_\zs{n\geq1} $
for 
$ h$
 as 
 $ h_\zs{0} = 0 $,
 and for 
 $ n\ge 1 $,
as
\begin{equation}
\label{sec:FKm.11}
h_\zs{n} = \cL_\zs{h_\zs{n - 1}} \,.
\end{equation}
In the following theorems we show that the approximation sequence goes to the fixed function $ h$, i.e. $ h =  \cL_\zs{h}$.
 \begin{theorem}\label{sec: optimal 2}
For any
$ 0<\delta<1/2$,
 the approximation 
$$
\parallel h - h_\zs{n} \parallel  \leq  O(n^{ -  \delta n})\quad \mbox{as} \quad n \rightarrow \infty.
$$
 \end{theorem}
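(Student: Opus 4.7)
The target rate $n^{-\delta n}$ decays faster than any geometric one, so the Banach contraction principle alone does not suffice. The plan is to exploit the Volterra structure of $\cL$ (integration over $[t,T]$) to derive successive bounds with factorial-type decay, and then apply Stirling's formula to translate this into $n^{-n/2}$, which is $o(n^{-\delta n})$ for every $0<\delta<1/2$.

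First I would establish Lipschitz-type estimates for $\cL$ and $\partial_s\cL$ on $\cX$. Since $\Gamma_0$ has bounded partial derivatives on the bounded set $\cX$, one gets $|\Psi_h(x,u)-\Psi_f(x,u)|\le L\,\Upsilon_{h,f}(x,u)$, and integrating against the law of $\eta_u^{s,t}\sim\cN(s\mu(u,t),\sigma_1^2(u,t))$ yields
\begin{equation*}
|\cL_h(s,t)-\cL_f(s,t)|\;\le\;L\int_t^T \E\,\Upsilon_{h,f}(\eta_u^{s,t},u)\,du.
\end{equation*}
For the $s$-derivative, I would differentiate the Gaussian density directly (rather than $\Psi_h$) to obtain
\begin{equation*}
\partial_s \E\,\Psi_h(\eta_u^{s,t},u)=\frac{\mu(u,t)}{\sigma_1^2(u,t)}\,\E\bigl[(\eta_u^{s,t}-s\mu(u,t))\,\Psi_h(\eta_u^{s,t},u)\bigr],
\end{equation*}
which crucially does not require $h_{ss}$. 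Using Cauchy--Schwarz together with the estimate $\sigma_1(u,t)\asymp\sigma\sqrt{u-t}$ as $u\to t$ then gives
\begin{equation*}
|\partial_s\cL_h(s,t)-\partial_s\cL_f(s,t)|\;\le\;L'\int_t^T\frac{1}{\sqrt{u-t}}\,\sup_x\Upsilon_{h,f}(x,u)\,du.
\end{equation*}

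Setting $\phi_n(t)=\sup_s\Upsilon_{h_n,h_{n-1}}(s,t)$ and combining the two estimates above with $h=h_n$, $f=h_{n-1}$, one gets a Volterra recursion
\begin{equation*}
\phi_{n+1}(t)\;\le\;C\int_t^T\bigl(1+(u-t)^{-1/2}\bigr)\,\phi_n(u)\,du.
\end{equation*}
Because $h_0=0$, the initial bound $\phi_1(t)\le\B_0+\B_1$ is uniform on $\cX$. Iterating and using the beta identity $\int_t^T (u-t)^{-1/2}(T-u)^{(k-1)/2}\,du=B(1/2,(k+1)/2)(T-t)^{k/2}$, an induction (the telescoping of the ratios of gamma functions) yields
\begin{equation*}
\phi_n(t)\;\le\;K\,\frac{(C_1\sqrt{T})^{n-1}}{\Gamma((n+1)/2)}.
\end{equation*}
Stirling gives $\Gamma((n+1)/2)\ge c\,((n-1)/(2e))^{(n-1)/2}$, hence $\phi_n(0)=O(n^{-n/2})$ up to polynomial factors. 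Since the weight $\me^{-\varkappa(T-t)}\le 1$ in $\rho$ only improves the bound, $\|h_n-h_{n-1}\|\lesssim\phi_n(0)$, and the super-geometric decay makes the telescoping tail $\|h-h_n\|\le\sum_{k>n}\|h_k-h_{k-1}\|$ dominated by its first term. This delivers $\|h-h_n\|=O(n^{-n/2})=O(n^{-\delta n})$ for any $\delta<1/2$.

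\textbf{Main obstacle.} The crux is the derivative estimate: differentiating $\cL_h$ in $s$ must be done \emph{without} invoking $h_{ss}$, since only $h$ and $h_s$ are controlled on $\cX$. The Gaussian-density trick accomplishes this but pays the price of a $1/\sqrt{u-t}$ singularity, which is what limits the iteration to $\Gamma(n/2)$-type decay rather than $n!$, and is precisely the source of the boundary $\delta<1/2$. A secondary technical point is tracking constants carefully so that $C_1\sqrt{T}$ cannot outpace the gamma-function growth; the smallness $T<T_0$ from \eqref{eq: T0} is what secures this.
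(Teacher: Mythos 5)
Your proof is correct, but it takes a genuinely different route from the paper's. The paper does not unwind the singular Volterra kernel: it leans entirely on \cref{Pr.sec:FKm.3} and \cref{Pr.sec:FKm.4}, where the contraction constant in the weighted metric \cref{sec:FKm.2} is $\lambda=\tilde{\B}_\zs{2}/\sqrt{\varkappa}$ and can therefore be made as small as desired by enlarging the weight $\varkappa$. Converting $\rho(h,h_\zs{n})\le (\B_\zs{0}+\B_\zs{1})\lambda^{n}/(1-\lambda)$ back to the uniform norm costs a factor $\me^{\varkappa T}$; choosing $\lambda=1/\sqrt{n}$, i.e.\ $\varkappa=n\tilde{\B}_\zs{2}^{2}$, balances the geometric gain $\lambda^{n}=n^{-n/2}$ against the geometric loss $\me^{\varkappa T}=C^{n}$ and yields $O(n^{-\delta n})$ for every $\delta<1/2$. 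Your argument instead iterates $\phi_\zs{n+1}(t)\le C\int_\zs{t}^{T}\bigl(1+(u-t)^{-1/2}\bigr)\phi_\zs{n}(u)\,\d u$ via the beta identity and Stirling to get $1/\Gamma((n+1)/2)$ decay directly. The two are dual manifestations of the same fact: the bound $\int_\zs{t}^{T}\me^{\varkappa(T-u)}(u-t)^{-1/2}\,\d u\le \sqrt{\pi/\varkappa}\,\me^{\varkappa(T-t)}$ that produces $\lambda\sim\varkappa^{-1/2}$ encodes precisely the $(u-t)^{-1/2}$ singularity of the differentiated Gaussian kernel that caps your iteration at $\Gamma(n/2)$-type growth, and both arguments locate the threshold $\delta<1/2$ for the same reason. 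Your route avoids re-tuning the metric for each $n$ (in the paper $\varkappa$ must depend on $n$, which sits a little awkwardly with the single fixed $\varkappa$ of \cref{Pr.sec:FKm.3}); the paper's route avoids the gamma-function bookkeeping. One small correction to your closing remark: the condition $T<T_\zs{0}$ of \cref{eq: T0} is not what prevents $(C_\zs{1}\sqrt{T})^{n}$ from outpacing $\Gamma((n+1)/2)$ --- the gamma function beats any geometric factor for arbitrary $T$. The smallness of $T$ is needed earlier, to guarantee $\cL:\cX\to\cX$ (\cref{Pr.sec:FKm.1}) so that all iterates $h_\zs{n}$ remain in $\cX$ and the Lipschitz constants ($2\B_\zs{1}$ for the squared derivatives, $1/(1-\gamma)$ for $G$) are uniform along the sequence.
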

\begin{remark}
Note that the convergence rate is super geometrical.
\end{remark}
 \noindent
 Now we define the approximation. We set
 $$
 \alpha^*_\zs{n}(\varsigma, t)= \check{\beta}_\zs{n}(s, t) x
 \quad
 \mbox{and}
 \quad
 c^*_\zs{n}(\varsigma, t)= \check{G}_\zs{n}(s,t) x
 $$
where
$$
\check{\beta}_\zs{n}(s,t)= \frac{1}{1- \gamma}
						\bigg( 
						s  g(t) + \frac{\partial h_\zs{n}(s, t)}{\partial s}  - \frac{\kappa_\zs{1}}{\sigma}  s 
						\bigg)
		\quad
		\mbox{and}
		\quad
\check{G}_\zs{n}(s,t)= \varpi^{\frac{1}{\gamma-1}}  G(s, t, h_\zs{n}(s,t)).
$$
\begin{theorem} \label{sec: optimal 1}
For any  
$0< \delta <1/2$ 
$$
\sup_\zs{\substack{\varsigma \\[2mm] 0 \le t \le T}} \Big( \big| \alpha^*(\varsigma, t) -  \alpha^*_\zs{n}(\varsigma, t) \big| + \big| c^*(\varsigma, t) -  c^*_\zs{n}(\varsigma, t) \big| \Big)  \leq  O(n^{ -   \delta n}) ,
				\quad
				\mbox{as} 
				\quad
				n \rightarrow \infty.
				$$
 \end{theorem}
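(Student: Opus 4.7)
\medskip

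\noindent\textbf{Proof plan for \Cref{sec: optimal 1}.} The plan is to reduce both discrepancies $\alpha^*-\alpha^*_\zs{n}$ and $c^*-c^*_\zs{n}$ to linear functionals of $Y-h_\zs{n}$ and $Y_\zs{s}-\partial_\zs{s}h_\zs{n}$, and then invoke \Cref{sec: optimal 2}. Recall that the metric $\rho$ in \cref{sec:FKm.2} dominates both the value and spatial derivative discrepancies: for every $(s,t)$,
\begin{equation*}
|Y(s,t)-h_\zs{n}(s,t)|+|Y_\zs{s}(s,t)-\partial_\zs{s}h_\zs{n}(s,t)|\;\le\;\me^{\varkappa T}\,\rho(Y,h_\zs{n})\,.
\end{equation*}
Since \Cref{sec: optimal 2} gives $\rho(Y,h_\zs{n})\le O(n^{-\delta n})$, it suffices to establish pointwise Lipschitz-type bounds, uniform in $(s,t)\in\bbr\times[0,T]$ and in $x$ on any bounded range, of the two differences in terms of the above quantity.

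\medskip

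\noindent\textbf{Step 1 (investment).} From the formulas in \cref{sec:alpha_c_00} and the definition of $\check{\beta}_\zs{n}$, the $s$-dependent parts $s\,g(t)$ and the linear term in $s$ cancel, so
\begin{equation*}
\check{\beta}(s,t)-\check{\beta}_\zs{n}(s,t)=\frac{1}{1-\gamma}\bigl(Y_\zs{s}(s,t)-\partial_\zs{s}h_\zs{n}(s,t)\bigr).
\end{equation*}
Hence $|\alpha^*(\varsigma,t)-\alpha^*_\zs{n}(\varsigma,t)|\le \frac{|x|}{1-\gamma}\,\me^{\varkappa T}\rho(Y,h_\zs{n})$, which immediately gives the claimed rate once $x$ is controlled.

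\medskip

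\noindent\textbf{Step 2 (consumption).} By \cref{sec:alpha_c_00}, $\check{c}_\zs{0}(\varsigma,t)-c^*_\zs{n}(\varsigma,t)=\varpi^{1/(\gamma-1)}\,x\,\bigl(G(s,t,Y(s,t))-G(s,t,h_\zs{n}(s,t))\bigr)$. From \cref{sec:FKm.66}, $\partial_\zs{y}G(s,t,y)=-\tfrac{1}{1-\gamma}G(s,t,y)$. Since $g$ is decreasing with $g(T)=0$, we have $g(t)\ge 0$ on $[0,T]$, and since $Y,h_\zs{n}\in\cX$ are nonnegative and bounded by $\B_\zs{0}$, the argument of the exponential defining $G$ is nonpositive up to a shift of $-\B_\zs{0}/(1-\gamma)$. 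Therefore $G$ is globally bounded by $\exp\{\B_\zs{0}/(1-\gamma)\}$ on the relevant range, yielding a uniform Lipschitz constant $L=(1-\gamma)^{-1}\exp\{\B_\zs{0}/(1-\gamma)\}$ of $G$ in its third variable. Combined with the $\rho$-bound on $|Y-h_\zs{n}|$, this yields $|c^*(\varsigma,t)-c^*_\zs{n}(\varsigma,t)|\le |x|\,\varpi^{1/(\gamma-1)}L\,\me^{\varkappa T}\rho(Y,h_\zs{n})$.

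\medskip

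\noindent\textbf{Step 3 (conclusion and obstacles).} Summing the two contributions and applying \Cref{sec: optimal 2} with the same $\delta\in(0,1/2)$ gives the asserted super-geometric rate. The main technical point is the uniformity in $s\in\bbr$ in Step 2: the exponent $-s^2 g(t)/(2(1-\gamma))$ could in principle destroy control for large $|s|$ if $g$ were negative, so the crucial input is the monotonicity of $g$ established after \cref{funct_g} together with the terminal condition $g(T)=0$, ensuring $g(t)\ge 0$ and thus making $G(s,t,\cdot)$ uniformly Lipschitz on $[0,\B_\zs{0}]$. A secondary point is the linear dependence on $x$: the supremum in the statement should be understood as being taken with $x$ in a bounded set (equivalently, the statement amounts to the convergence $\check{\beta}_\zs{n}\to\check{\beta}$ and $\check{G}_\zs{n}\to\check{G}$ at the super-geometric rate), after which the factor $x$ can be pulled out and the desired estimate is immediate.
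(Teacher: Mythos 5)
The paper itself gives no proof of \cref{sec: optimal 1} (Section~\ref{sec:Pr} proves only \cref{sec:solY}, \cref{sec: optimal 3} and \cref{sec: optimal 2}), so there is nothing to compare against line by line; your reduction to \cref{sec: optimal 2} via the Lipschitz structure of $\check{\beta}$ and $\check{G}$ is clearly the intended and essentially the only route, and it is correct in substance. Two remarks. First, a small imprecision in your setup: you pass through $\rho(Y,h_\zs{n})$ with a factor $\me^{\varkappa T}$, but in the proof of \cref{sec: optimal 2} the parameter $\varkappa$ is chosen to grow with $n$ (namely $\varkappa=n\tilde{\B}_\zs{2}^2$), so for a fixed $\varkappa$ the bound $\rho(Y,h_\zs{n})=O(n^{-\delta n})$ is not what that theorem delivers; you should cite its conclusion directly as a bound on $\sup_\zs{s,t}\bigl(|Y-h_\zs{n}|+|Y_\zs{s}-\partial_\zs{s}h_\zs{n}|\bigr)$, which is exactly the quantity $\Upsilon^*_\zs{h,h_\zs{n}}$ estimated there. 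Second, your Lipschitz constant for $G$ can be taken simply as $1/(1-\gamma)$: since $g(t)\ge 0$ and the third argument ranges over $[0,\B_\zs{0}]$, the exponent in \cref{sec:FKm.66} is nonpositive, so $G\le 1$ and $|\partial_\zs{y}G|\le (1-\gamma)^{-1}$; this is the same Lipschitz bound the paper uses inside the proof of \cref{Pr.sec:FKm.3}, and your constant $L=(1-\gamma)^{-1}\exp\{\B_\zs{0}/(1-\gamma)\}$ is a harmless overestimate. Your observation that the literal supremum over all $\varsigma=(x,s)$ cannot hold as stated because both strategies are linear in $x$ (so one must either restrict $x$ to a bounded set or read the claim as convergence of $\check{\beta}_\zs{n}$ and $\check{G}_\zs{n}$) is a fair and correct criticism of the statement as printed.
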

\begin{remark}
 As it is  seen from \cref{sec:solY} the approximation scheme for the HJB equation implies the approximation for the optimal strategy  with  super geometrical rate. 
 i.e. more rapid than any geometrical ones.
 \end{remark}


\section{Properties of the Feynman--Kac mapping}\label{sec:PFKm}

We need to study the properties of the mapping (\ref{sec:FKm.3}).

\begin{proposition} \label{Pr.sec:FKm.0}
The space
$ (\cX, \rho)$
 is the completed metrical space.
\end{proposition}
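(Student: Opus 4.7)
The plan is standard: first observe that $\rho$ defined in \cref{sec:FKm.2} is indeed a metric on $\cX$, and then verify completeness by extracting a limit of an arbitrary Cauchy sequence and showing it lies in $\cX$. The key observation is that the weight $\me^{-\varkappa(T-t)}$ is bounded between $\me^{-\varkappa T}$ and $1$ on $[0,T]$, so convergence in $\rho$ is equivalent to simultaneous uniform convergence of the functions and their $s$-derivatives on $\bbr\times[0,T]$. The fact that every element of $\cX$ has $|h|\le \bold{B}_0$ and $|h_s|\le\bold{B}_1$ ensures $\rho$ takes only finite values, and symmetry, nondegeneracy, and the triangle inequality are routine.

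For completeness, I would take a Cauchy sequence $(h_n)_{n\ge 1}\subset\cX$. By the weight equivalence above, both $(h_n)$ and $((h_n)_s)$ are Cauchy in the uniform norm on $\bbr\times[0,T]$. The space $C_b(\bbr\times[0,T])$ of bounded continuous functions on $\bbr\times[0,T]$ is complete under the sup norm, so there exist $h,w\in C_b(\bbr\times[0,T])$ with
\begin{equation*}
\sup_{s,t}|h_n(s,t)-h(s,t)|\to 0\quad\mbox{and}\quad \sup_{s,t}|(h_n)_s(s,t)-w(s,t)|\to 0.
\end{equation*}
Pointwise passage to the limit in the inequalities defining $\cX$ gives $0\le h\le \bold{B}_0$ and $|w|\le \bold{B}_1$.

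The main point to check is that $h$ has a continuous partial derivative with respect to $s$ and that this derivative equals $w$; this is the substantive step. For fixed $t\in[0,T]$ and $s_0,s\in\bbr$, the fundamental theorem of calculus yields
\begin{equation*}
h_n(s,t)-h_n(s_0,t)=\int_{s_0}^{s}(h_n)_s(\xi,t)\,\d\xi.
\end{equation*}
Uniform convergence of $(h_n)_s$ to $w$ on the (compact) interval of integration, combined with pointwise convergence of $h_n$ to $h$, lets us pass to the limit on both sides to obtain $h(s,t)-h(s_0,t)=\int_{s_0}^{s}w(\xi,t)\,\d\xi$. Since $w$ is continuous in $s$, it follows that $h_s(s,t)$ exists and equals $w(s,t)$, and is continuous in $s$; continuity of $h$ in $t$ is inherited from uniform convergence. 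Hence $h\in C^{1,0}_+(\bbr\times[0,T])$ and, together with the inherited bounds, $h\in\cX$.

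Finally, the double uniform convergence of $h_n\to h$ and $(h_n)_s\to h_s$ yields
\begin{equation*}
\rho(h_n,h)\le \sup_{s,t}\bigl(|h_n-h|+|(h_n)_s-h_s|\bigr)\longrightarrow 0,
\end{equation*}
so $h_n\to h$ in $(\cX,\rho)$. The only slightly delicate ingredient is the differentiation-under-uniform-convergence step above; everything else is bookkeeping on sup norms and straightforward passage to the limit in the defining bounds of $\cX$.
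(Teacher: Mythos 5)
Your proof is correct and follows the same route as the paper, which simply asserts that $\cX$ is closed in the complete space $C^{1,0}(\bbr\times[0,T])$ and leaves the verification to the reader; you supply exactly that verification (weight equivalence of $\rho$ with the sup norm, completeness of $C_b$, the fundamental-theorem-of-calculus argument identifying the limit of the derivatives with the derivative of the limit, and passage to the limit in the defining bounds). No substantive difference from the paper's intended argument.
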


%
\begin{proposition}\label{Pr.sec:FKm.1}
Assume that 
$ T\le \pi/16   $.
 Then 
 $ \cL_\zs{h}\in\cX$
for any 
$ h\in\cX $,
 i.e. 
 $ \cL_\zs{h}:\cX\to\cX $.
\end{proposition}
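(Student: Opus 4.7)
The plan is to verify the four defining properties of \(\cX\) for the image \(\cL_h\): joint continuity of \(\cL_h\) and of \(\partial_s\cL_h\); nonnegativity; the sup bound by \(\mathbf{B}_0\); and the derivative bound by \(\mathbf{B}_1\). The available inputs are the pointwise constraints \(0\le h\le \mathbf{B}_0\), \(|h_s|\le \mathbf{B}_1\), and the explicit Gaussian law \(\eta^{s,t}_u\sim\cN(s\mu(u,t),\sigma_1^2(u,t))\) from \eqref{eqn:mu&sigma}.

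Positivity and the \(\mathbf{B}_0\) bound I would handle by a term-by-term inspection of \(\Gamma_0\). Because \(0<\gamma<1\), \(g\ge 0\) on \([0,T]\), \(h\ge 0\), \(G>0\), and (in the usual sign convention for the interest rate) \(r\ge 0\), every summand in \eqref{eqn: GAMMA0} is nonnegative, so \(\cL_h\ge 0\). For the sup estimate, \(|h_s|\le\mathbf{B}_1\) gives \(\sigma^2 h_s^2/(2(1-\gamma))\le\gamma_1\mathbf{B}_1^2/2\); the hypotheses \(h\ge 0\) and \(s^2 g(t)\ge 0\) give \(G\le 1\), hence \((1-\gamma)\varpi_1 G\le(1-\gamma)\varpi^{1/(\gamma-1)}\); and monotonicity of \(g\) gives \(\sigma^2 g(t)/2\le\sigma^2 g(0)/2\). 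Summing the four pieces yields \(\Psi_h\le q_1+\gamma_1\mathbf{B}_1^2/2=\mathbf{B}_0/T\) pointwise, and integrating over \([t,T]\) then produces \(\cL_h(s,t)\le(T-t)\mathbf{B}_0/T\le\mathbf{B}_0\).

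For the derivative bound, which is the core of the argument, note that \(\Psi_h\) involves \(h_s\) but not \(h_{ss}\), so I cannot differentiate the integrand in \(s\) directly. I would instead transfer the derivative onto the Gaussian density via the standard score identity
\[
\partial_s\E\Psi_h(\eta^{s,t}_u,u)=\frac{\mu(u,t)}{\sigma_1(u,t)}\,\E\big[\Psi_h(\eta^{s,t}_u,u)\,Z_u\big],\qquad Z_u:=\frac{\eta^{s,t}_u-s\mu(u,t)}{\sigma_1(u,t)}\sim\cN(0,1).
\]
The piece of \(\Psi_h\) that is constant in the spatial variable (namely \(\sigma^2 g(u)/2+r\gamma\)) is orthogonal to \(Z_u\) and drops out, so only \(A_h:=\sigma^2 h_s^2/(2(1-\gamma))\) and \(B_h:=(1-\gamma)\varpi_1 G(\cdot,\cdot,h)\) contribute. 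The sup bounds \(\|A_h\|_\infty\le\gamma_1\mathbf{B}_1^2/2\) and \(\|B_h\|_\infty\le(1-\gamma)\varpi_1\) together with \(|\E[V Z_u]|\le\|V\|_\infty\E|Z_u|=\|V\|_\infty\sqrt{2/\pi}\) then yield
\[
\big|\partial_s\E\Psi_h(\eta^{s,t}_u,u)\big|\le\Big(\tfrac{\gamma_1}{2}\mathbf{B}_1^2+(1-\gamma)\varpi_1\Big)\sqrt{\tfrac{2}{\pi}}\,\frac{\mu(u,t)}{\sigma_1(u,t)},
\]
so that after integrating over \(u\in[t,T]\) the whole problem reduces to a sharp estimate of \(\int_t^T\mu(u,t)/\sigma_1(u,t)\,\d u\).

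That last step is the main obstacle and the place where the smallness hypothesis \(T\le\pi/16\) enters. In the clean limit \(g_1\equiv 0\) one has \(\sigma_1^2(u,t)=\sigma^2(u-t)\) and the integral is exactly \(2\sqrt{T-t}/\sigma\); for nonzero drift \(g_1=\gamma_1 g-\gamma_2\) the deviations must be controlled through the ODE \eqref{eq_g} together with the other smallness constraints absorbed into the definition of \(T_0\) in \eqref{eq: T0}. Once the integral is pinned down, matching the result to \(\mathbf{B}_1\) is a one-line algebraic manipulation using the quadratic relation defining \(\mathbf{B}_1\) through \eqref{eq:B0.B1}, namely \(\gamma_1\mathbf{B}_1^2-2\sqrt{\pi/(2T)}\,\mathbf{B}_1+2(1-\gamma)\varpi_1=0\), which is the equation whose positive roots are the two square-root terms in \(\mathbf{B}_1\). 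Finally, the joint continuity of \(\cL_h\) and \(\partial_s\cL_h\) needed for \(\cL_h\in C^{1,0}_+\) is a standard dominated convergence argument using the same integrable majorants supplied by the bounds above.
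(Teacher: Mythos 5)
Your proposal is correct and follows essentially the same route as the paper: the $\mathbf{B}_0$ bound by term-by-term estimation of $\Gamma_0$ using $g(t)\le g(0)$, $|h_s|\le\mathbf{B}_1$ and $G\le 1$, and the derivative bound via the Gaussian score identity (which is exactly the content of the paper's Lemma~\ref{Lem: Prob.sol}, including the bound $\mu(u,t)/\sigma_1(u,t)\le 1/(\sigma\sqrt{u-t})$ for the integral you flag as the main obstacle), followed by matching against the defining quadratic for $\mathbf{B}_1$. The only differences are cosmetic: you make the nonnegativity and continuity checks explicit where the paper leaves them implicit.
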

\begin{proof}
The function 
$ \cL_\zs{h}(s,t)$
 is  given in \cref{sec:FKm.3} and can be written as
\begin{align*}
\cL_\zs{h}(s,t) =    \frac{\sigma^2}{2} \int^T_\zs{t} g(u)  \d  u + \frac{\sigma^2}{2(1 - \gamma)} \E \int^T_\zs{t} h^2_\zs{s} ( \eta^{s, t}_\zs{u}, u)  \d  u +r \gamma(T - t)
\\
+ (1 -  \gamma)   \varpi^{\frac{1}{\gamma  - 1}}  \E \int^T_\zs{t} G (\eta^{s, t}_\zs{u}, u, h(\eta^{s, t}_\zs{u}, u))  \d  u ,
\end{align*}
with $ G(s, t, y)$ is given in \cref{sec:FKm.66}.
\\
Therefore, 
\begin{align}\label{Prop rep 2}
|\cL_\zs{h}(s,t)|  \leq   \frac{\sigma^2}{2}  g(0) (T - t) + \frac{\sigma^2}{2(1 - \gamma)} \pmb{B}_\zs{1}^2 (T - t)+r \gamma(T - t) \nonumber
\\
+ (1 -  \gamma)   \varpi^{\frac{1}{\gamma  - 1}}   (T - t)\le  \B_\zs{0} , 
\end{align}
where  $ \B_\zs{0}$ and $\B_\zs{1}$ are given in \cref{eq:B0.B1}.
Then by taking the derivative with respect to 
$ s $,
 we get
\begin{multline*}
\frac{\partial}{\partial s}\cL_\zs{h}(s,t) = \frac{\sigma^2}{2(1 - \gamma)} \frac{\partial}{\partial s}\E \int^T_\zs{t} h^2_\zs{s} ( \eta^{s, t}_\zs{u}, u)  \d  u 
\\[2mm]
+ (1 -  \gamma)   \varpi^{\frac{1}{\gamma  - 1}} \frac{\partial}{\partial s}\E \int^T_\zs{t} G (\eta^{s, t}_\zs{u}, u, h(\eta^{s, t}_\zs{u}, u))  \d  u .
\end{multline*}
From \cref{Lem: Prob.sol} and as
 $ \dsl{\parallel  G \big(\eta^{s, t}_\zs{u} , u, h( \eta^{s, t}_\zs{u}, u)  \big) \parallel_\zs{t, \infty}  \leq 1} $, 
 we have
$$
\bigg|
		\frac{\partial}{\partial s}\cL_\zs{h}(s,t) 
\bigg|
				 \leq  
\frac{\sigma}{(1 - \gamma)} \sqrt{\frac{2 (T - t)}{\pi}} \pmb{B}_\zs{1}^2+ (1 -  \gamma)   \varpi^{\frac{1}{\gamma  - 1}}  \frac{2}{\sigma} \sqrt{\frac{2 (T - t)}{\pi}} .
$$
 Then by taking into account the definition of 
 $ \pmb{B}_\zs{1}$
  in \cref{sec:FKm.1} we obtain,
$$
			\bigg|
			\frac{\partial}{\partial s}\cL_\zs{h}(s,t)
			\bigg|
			\leq  
			\frac{\sigma}{(1 -  \gamma)}  \sqrt{\frac{2T}{\pi}} \pmb{B}_\zs{1}^2 
		    + (1 - \gamma)   \varpi^{\frac{1}{\gamma  - 1}}  \frac{2}{\sigma} \sqrt{\frac{2T}{\pi}} 
			 \leq 
			\pmb{B}_\zs{1} .
$$
So, we get that $\cL_\zs{h} \in \cX$.
Hence \cref{Pr.sec:FKm.1}.
\end{proof} 
\begin{proposition}\label{Pr.sec:FKm.2}
For all 
$ f \in \cX$, for all 
 $ s $, and
 $ 0 \le t \le T $,
$$
\frac{\partial}{\partial s} \cL_\zs{f}(s, t) =  \int^T_\zs{t} 
	\Bigg(
				\int_\zs{\bbr} \Gamma_\zs{0} \Big( z, t, f(z, u), f_\zs{s}(z, u)  \Big) \; \varrho(s, t, z, u)   \d  z
			   \Bigg)
			    \d  u ,
$$ 
where $\Gamma_\zs{0}$ is as in \cref{eqn: GAMMA0} and 
\begin{equation}\label{eq:varrho}
\varrho(s, t, z, u) =  \frac{\partial}{\partial s } \varphi(s, t, z, u)  =   K  \frac{\mu(u,t) }{ \sigma_\zs{1} (u,t)} \; \varphi(s, t, z, u) \,, 
\end{equation}
where 
\begin{equation}\label{eq:varphi}
\varphi(s, z, u)= \frac{ \me^{-\frac{K^2}{2}}}{\sqrt{2 \pi} \sigma_\zs{1}(u, t)  }
\quad
\mbox{ and}
\quad
 K(s, z, u) =  \frac{z -  s  \mu (u, t) }{ \sigma_\zs{1}(u, t) } \,.
\end{equation}
\end{proposition}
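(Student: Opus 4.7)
\medskip

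\noindent\textbf{Proof plan.} The strategy is to write the expectation in the definition of $\cL_f$ as an integral against the explicit Gaussian density of $\eta^{s,t}_u$, and then differentiate under the integral sign, since the only $s$-dependence remaining is inside the density. Concretely, because $\eta^{s,t}_u\sim\cN(s\mu(u,t),\sigma_1^2(u,t))$ by \cref{eqn:mu&sigma}, we have
\begin{equation*}
\E\,\Psi_f(\eta^{s,t}_u,u)\;=\;\int_\bbr \Gamma_\zs{0}\!\left(z,u,f(z,u),f_\zs{s}(z,u)\right)\varphi(s,t,z,u)\,\d z,
\end{equation*}
with $\varphi(s,t,z,u)$ the density introduced in \cref{eq:varphi}. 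Substituting into \cref{sec:FKm.3} and applying Fubini yields the double integral
\begin{equation*}
\cL_\zs{f}(s,t)=\int_t^T\!\int_\bbr \Gamma_\zs{0}\!\left(z,u,f(z,u),f_\zs{s}(z,u)\right)\varphi(s,t,z,u)\,\d z\,\d u.
\end{equation*}

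\medskip

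\noindent The next step is to justify $\partial_\zs{s}$ passing inside both integrals. Since $f\in\cX$, the bounds in \cref{sec:FKm.1} give $|f|\le\B_\zs{0}$ and $|f_\zs{s}|\le\B_\zs{1}$; moreover $g(t)\ge 0$ on $[0,T]$ by the monotonicity noted after \cref{funct_g}, so the function $G$ of \cref{sec:FKm.66} is bounded by $\exp(\B_\zs{0}/(1-\gamma))$, and $\Gamma_\zs{0}(z,u,f(z,u),f_\zs{s}(z,u))$ is uniformly bounded in $(z,u)$ by a constant $C_f$. Using the elementary identity $\partial_\zs{s}\varphi(s,t,z,u)=K\,\mu(u,t)\sigma_\zs{1}^{-1}(u,t)\,\varphi(s,t,z,u)$ (derived below), the dominating integrand is $C_f\,|K|\,\mu(u,t)\sigma_\zs{1}^{-1}(u,t)\,\varphi(s,t,z,u)$, whose $z$-integral equals $C_f\sqrt{2/\pi}\,\mu(u,t)\sigma_\zs{1}^{-1}(u,t)$, integrable in $u\in[t,T]$. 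This legitimizes differentiation under the integral and produces the claimed formula.

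\medskip

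\noindent Finally, the elementary computation for $\partial_\zs{s}\varphi$: from \cref{eq:varphi}, $\varphi=(\sqrt{2\pi}\sigma_\zs{1})^{-1}\me^{-K^2/2}$ with $K(s,z,u)=(z-s\mu(u,t))/\sigma_\zs{1}(u,t)$, so $\partial_\zs{s}K=-\mu(u,t)/\sigma_\zs{1}(u,t)$ and
\begin{equation*}
\partial_\zs{s}\varphi\;=\;-K\,\partial_\zs{s}K\,\varphi\;=\;K\,\frac{\mu(u,t)}{\sigma_\zs{1}(u,t)}\,\varphi,
\end{equation*}
which is exactly \cref{eq:varrho}. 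Combining the three steps gives the proposition.

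\medskip

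\noindent The only delicate point is the justification of differentiation under the integral; all other steps are direct calculations. The uniform bound on $\Gamma_\zs{0}$ provided by membership $f\in\cX$ together with the positivity of $g$ is the ingredient that makes that justification straightforward.
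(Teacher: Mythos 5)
Your proof is correct and follows essentially the same route as the paper: both write $\E\,\Psi_f(\eta^{s,t}_u,u)$ as an integral against the Gaussian density $\varphi$, use the identity $\partial_s\varphi=K\,\mu\,\sigma_1^{-1}\varphi$, and justify moving $\partial_s$ inside via the bound $\int_\bbr|\varrho|\,\d z\le\sqrt{2/\pi}\,\mu/\sigma_1$ together with the uniform bound on $\Gamma_0$ coming from $f\in\cX$ (the paper just carries out the interchange by hand with an explicit difference quotient and a vanishing remainder term rather than citing domination). The only nicety worth tightening is that your dominating integrand $C_f|K|\mu\sigma_1^{-1}\varphi$ depends on $s$; for the standard differentiation lemma you should take the supremum over $s$ in a compact neighbourhood, which still yields an integrable bound of the same order.
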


\begin{proposition}\label{Pr.sec:FKm.3}
The mapping 
$ \cL$
 is constraint in $ \cX $,  i.e. for any 
 $0 < \lambda <1$,
 there exists $ \varkappa \ge 1$
 in the metric \cref{sec:FKm.2}
 such that for any $ h$ 
 and 
 $ f \in \cX$ 
\begin{equation}\label{rho(Lh.Lf)}
\rho(\cL_\zs{h},\cL_\zs{f})\le
\lambda
\rho(h,f)
\,.
\end{equation}
\end{proposition}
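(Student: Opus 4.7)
The plan is to bound $\rho(\cL_h,\cL_f)$ by splitting $\Upsilon_{\cL_h,\cL_f}(s,t)$ into the value term $|\cL_h-\cL_f|$ and the derivative term $|\partial_s\cL_h-\partial_s\cL_f|$, and for each showing the exponential weight $e^{-\varkappa(T-t)}$ introduces a factor that shrinks to $0$ as $\varkappa\to\infty$. The central observation is that $\Psi_h-\Psi_f$ is Lipschitz in the pair $(h,h_s)$ uniformly in $(s,t)$, so the two supremum norms hidden in $\rho$ can be transferred through the FK integration.

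First I would establish a pointwise Lipschitz estimate of the form
\[
|\Psi_h(s,t)-\Psi_f(s,t)|\le C_*\,\Upsilon_{h,f}(s,t),
\qquad
C_*=\max\!\Big(\tfrac{\sigma^2\B_1}{1-\gamma},\,\varpi_1\Big).
\]
For the quadratic term $\frac{\sigma^2}{2(1-\gamma)}(h_s^2-f_s^2)$ I would factor and use $|h_s|,|f_s|\le\B_1$ inherited from $\cX$ to get $|h_s^2-f_s^2|\le 2\B_1|h_s-f_s|$. For the $G$ term, I would use $G(s,t,y)\le 1$ (since $g(t)\ge 0$ and $h,f\ge 0$ on $\cX$) and the identity $\partial_y G=-\tfrac{1}{1-\gamma}G$, giving $|G(s,t,h)-G(s,t,f)|\le\tfrac{1}{1-\gamma}|h-f|$. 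The resulting Lipschitz constant cancels the $(1-\gamma)$ prefactor in $\Gamma_0$.

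Next, using the definition of $\rho$, at any point $(z,u)$ we have $\Upsilon_{h,f}(z,u)\le e^{\varkappa(T-u)}\rho(h,f)$. For the value piece I would write
\[
|\cL_h(s,t)-\cL_f(s,t)|\le C_*\int_t^T \E\,\Upsilon_{h,f}(\eta^{s,t}_u,u)\,du\le C_*\rho(h,f)\,\frac{e^{\varkappa(T-t)}-1}{\varkappa}.
\]
Multiplying by $e^{-\varkappa(T-t)}$ produces a factor $\le C_*/\varkappa$. For the derivative piece I would invoke \cref{Pr.sec:FKm.2} to write
\[
|\partial_s\cL_h-\partial_s\cL_f|(s,t)\le C_*\int_t^T\!\!\int_\bbr \Upsilon_{h,f}(z,u)|\varrho(s,t,z,u)|\,dz\,du.
\]
The inner integral is computed from \cref{eq:varrho,eq:varphi} as
\[
\int_\bbr|\varrho(s,t,z,u)|\,dz=\frac{\mu(u,t)}{\sigma_1(u,t)}\,\E|N(0,1)|=\sqrt{\tfrac{2}{\pi}}\,\frac{\mu(u,t)}{\sigma_1(u,t)}.
\]

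The main obstacle is the integrable singularity $\mu(u,t)/\sigma_1(u,t)\sim 1/(\sigma\sqrt{u-t})$ as $u\downarrow t$. I would handle it by the substitution $v=u-t$:
\[
e^{-\varkappa(T-t)}\!\int_t^T e^{\varkappa(T-u)}\frac{\mu(u,t)}{\sigma_1(u,t)}\,du
\le\frac{C}{\sigma}\!\int_0^{T-t}\frac{e^{-\varkappa v}}{\sqrt v}\,dv
\le\frac{C}{\sigma}\sqrt{\tfrac{\pi}{\varkappa}}.
\]
Combining, I obtain
\[
\rho(\cL_h,\cL_f)\le \Big(\tfrac{C_*}{\varkappa}+\tfrac{C_*C'}{\sqrt{\varkappa}}\Big)\rho(h,f),
\]
and choosing $\varkappa$ large enough that the bracket is $\le\lambda$ yields \cref{rho(Lh.Lf)}. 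The only delicate point is ensuring the constant $C'$ arising from $\mu/\sigma_1$ is independent of $s$ and $t\in[0,T]$; this follows because $g_1$ is continuous and bounded on $[0,T]$, giving a uniform two-sided bound on $\mu(u,t)$ and on $\sigma_1(u,t)/\sqrt{u-t}$.
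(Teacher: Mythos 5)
Your proposal is correct and follows essentially the same route as the paper's own proof: the same split into the value term (giving a $1/\varkappa$ factor) and the derivative term (giving a $1/\sqrt{\varkappa}$ factor via the integrable singularity $\int_{\bbr}|\varrho|\,\d z\le \c^{*}/\sqrt{u-t}$), with the same Lipschitz bounds $|h_\zs{s}^2-f_\zs{s}^2|\le 2\B_\zs{1}|h_\zs{s}-f_\zs{s}|$ and $|G(\cdot,\cdot,y_\zs{1})-G(\cdot,\cdot,y_\zs{2})|\le\frac{1}{1-\gamma}|y_\zs{1}-y_\zs{2}|$. The only differences are cosmetic (a $\max$ rather than a sum for the Lipschitz constant, and slightly more explicit handling of the $e^{-\varkappa v}/\sqrt{v}$ integral).
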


\begin{proof}
Using the definition of  the mapping $\mathcal{L}_h$ in \cref{sec:FKm.3}
we obtain that for any $h$  and  $f$ from  $\cX$,
\begin{align*}
\mathcal{L}_h - \mathcal{L}_f  = & \frac{\sigma^2}{2(1 - \gamma)} \E \int^T_\zs{t} \bigg( h^2_\zs{s}(\eta^{s, t}_\zs{u}, u) -  f^2_\zs{s}( \eta^{s, t}_\zs{u}, u) \bigg)  \d  u
\\
&+(1 - \gamma)   \varpi^{\frac{1}{\gamma  - 1}}  \E \int^T_\zs{t} \Bigg( G \bigg( \eta^{s, t}_\zs{u}, u, h( \eta^{s, t}_\zs{u},u)\bigg) -  G \bigg(\eta^{s, t}_\zs{u}, u, f( \eta^{s, t}_\zs{u},u) \bigg) \Bigg)  \d  u \,.
\end{align*}
Taking into account that 
 the function $G$ is lipschitzian, i.e. 
for any $y_\zs{1}\ge 0$ and $y_\zs{2}\ge 0$
$$ 
\big|G(s, t, y_\zs{1}) -  G(s,  t, y_2)\big|  \leq  \frac{1}{1 -  \gamma}| y_\zs{1} -  y_2|
\,,
$$
we obtain that
\begin{align} \label{Prob. sol. 2}
|\cL_\zs{h} -  \cL_\zs{f}|  \leq  \frac{\sigma^2}{2(1 - \gamma)} \Bigg|\int^T_\zs{t} \E & \Big(h^2_\zs{s}( \eta^{s, t}_\zs{u}, u) - f^2_\zs{s}( \eta^{s, t}_\zs{u}, u) \Big)  \d  u \Bigg| \nonumber
\\[2mm]
&+   \varpi^{\frac{1}{\gamma  - 1}}  \E  \int^T_\zs{t} \Big| h( \eta^{s, t}_\zs{s} , u) -  f( \eta^{s, t}_\zs{u}, u) \Big|  \d  u .
\end{align}
Recall that $f$ and $h$ belong to  $\cX$, i.e. the difference for the squares of their derivatives can be estimated as
$\big|h^2_\zs{s}(z, u) - f^2_\zs{s}( z, u)\big|  \leq  2 \bold{B_\zs{1}}  |h_\zs{s}( z, u) - f_\zs{s}( z, u)|$,
therefore, 
\begin{align*}
\big|\mathcal{L}_h(s,t) -  \mathcal{L}_f(s,t)\big|  \leq & \bigg(\frac{\sigma^2 \bold{B_\zs{1}}}{(1 -  \gamma)}+  \varpi^{\frac{1}{\gamma  - 1}}  \bigg) \int^T_\zs{t} \Upsilon^*_\zs{h,f}(u) \me^{ - \varkappa (T - u)} \me^{\varkappa (T - u)}  \d  u ,
\end{align*}
where 
$\dsl\Upsilon^*_\zs{h,f}(t) =  \sup_\zs{y \in \bbr^p} \Upsilon_\zs{h,f}(y, t)$. In view of the definition
 \cref{sec:FKm.1} 
\begin{align*}
\big|\mathcal{L}_h(s,t) -  \mathcal{L}_f(s,t)\big|   \leq & \bigg(\frac{\sigma^2 \bold{B_\zs{1}}}{(1 -  \gamma)}+  \varpi^{\frac{1}{\gamma  - 1}}  \bigg)  \rho(h,f) \int^T_\zs{t}  \me^{\varkappa (T - u)}  \d  u
 \\[2mm]
  \leq & \bigg(\frac{\sigma^2 \bold{B_\zs{1}}}{(1 -  \gamma)}+  \varpi^{\frac{1}{\gamma  - 1}}  \bigg) \frac{\rho(h,f)}{\varkappa} \me^{\varkappa (T - t)} .
\end{align*}
Therefore for all  $ 0  \leq  t  \leq  T $,
$$
\sup_{s \in \bbr} |\cL_h(s, t) - \mathcal{L}_f(s,t)|  \leq  \frac{\bold{\tilde{B}_\zs{1}}}{\varkappa} \rho(h,f) \me^{\varkappa (T-t)} ,
\quad
\bold{\tilde{B}_\zs{1}} =  \frac{\sigma^2 \bold{B_\zs{1}}}{(1 -  \gamma)}+\varpi^{\frac{1}{\gamma -1}} .
$$
The partial derivative of $ \mathcal{L}(s,t)$  with respect to $ s$  is given by
\begin{align*}
\frac{\partial}{\partial s} \mathcal{L}_h(s,t) = & \frac{\sigma^2}{2(1- \gamma)} \; \E \;\frac{\partial}{\partial s} \int^T_\zs{t}  h^2_\zs{s}(\eta^{s, t}_\zs{u}, u)  \d  u 
\\[2mm]
&+ (1- \gamma)  \varpi^{\frac{1}{\gamma -1}}  \E \; \frac{\partial}{\partial s} \int^T_\zs{t}  G(\eta^{s, t}_\zs{u}, u, h(\eta^{s, t}_\zs{u}))  \d  u .
\end{align*}
By taking the expectation we obtain
\begin{align*}
\frac{\partial}{\partial s} \mathcal{L}_h(s,t) = & \frac{\sigma^2}{2(1- \gamma)} \int^T_\zs{t} \int_{\bbr} h^2_\zs{s}(z, u) \frac{\partial}{\partial s} \varphi (z,u)  \d  z \;  \d  u
\\[2mm]
&+ (1- \gamma)  \varpi^{\frac{1}{\gamma -1}}   \int^T_\zs{t} \int_{\bbr} G(z,  u, h(z, u)) \frac{\partial}{\partial s}\varphi(z, u)  \d  z \;   \d  u ,
\end{align*}
where $\dsl\varrho(s,t,z,u)= \partial \varphi (s,t,z,u)/\partial s$ and $\varphi (s,t,z,u)$ is given in \cref{eq:varphi}.
Therefore,  for $u>t$ 
and for some constant $ \c^{*}>0$
\begin{equation}\label{eqn: int(rho)<c}
\sup_\zs{s \in \bbr}\int_{\bbr} |\varrho(s,t,z,u)|  \d  z \; \leq \; \frac{\c^{*}}{\sqrt{ (u-t)}} .
\end{equation}
Putting now
$\hat{\alpha}_\zs{1}= \sigma^2(2- 2\gamma)^{-1}$
and $\hat{\alpha}_\zs{2}= (1- \gamma) \varpi^{\frac{1}{\gamma -1}}$,
we obtain that
\begin{align*}
\big|\frac{\partial}{\partial s} \mathcal{L}_h(s,t)&-  \frac{\partial}{\partial s} \mathcal{L}_f(s,t) \Big| =   \bigg|\int^T_\zs{t} \int_{\bbr} \Big(\hat{\alpha}_\zs{1} (h^2_\zs{s}(z, u)-f^2_\zs{s}(z, u))
\\[2mm]
&+ \hat{\alpha}_\zs{2} ( G(z, u, h(z, u)) - G(z, u, f(z, u)) \Big) \varrho(s,t,z,u)  \d  z \d  u \bigg| .
\end{align*}
Note here,  that
$$
\Big| \hat{\alpha}_\zs{1} (h^2_\zs{s}(z, u)-f^2_\zs{s}(z, u))+\hat{\alpha}_\zs{2} ( G(z, u, h(z, u)) - G(z, u, f(z, u))  \Big| 
 \leq  
 \bold{B_\zs{2}} \Upsilon^*_\zs{f,h}(u) ,
$$
where $\bold{B_\zs{2}}=\Big(2\hat{\alpha}_\zs{1} \bold{\tilde{B}_\zs{1}} + \hat{\alpha}_\zs{2} (1- \gamma) \Big)$.
Thus,
$$
\bigg|\frac{\partial}{\partial s} \mathcal{L}_h(s,t)-  \frac{\partial}{\partial s} \mathcal{L}_f(s,t) \bigg| \leq   \bold{B_\zs{2}} \int^T_\zs{t} \Upsilon^*_\zs{f,h}(u) \bigg( \int_\zs{\bbr} |\varrho(s,t,z,u) |  \d  z \bigg) du .
$$
Using here the bound 
\cref{eqn: int(rho)<c}, we obtain that
$$
\Big|\frac{\partial}{\partial s} \mathcal{L}_h(s,t)-  \frac{\partial}{\partial s} \mathcal{L}_f(s,t) \Big| \; \leq \;  \bold{B_\zs{2}} \; \sqrt{\frac{2}{\pi}}  \int^T_\zs{t} \frac{1}{\sqrt{u-t}} \Upsilon^*_\zs{f,h}(u) \me^{- \varkappa(T-u)} \me^{ \varkappa(T-u)} \d u .
$$
Using again here the definition
\cref{sec:FKm.1} we get
\begin{multline*}
\big|\frac{\partial}{\partial s} \mathcal{L}_h(s,t)-  \frac{\partial}{\partial s} \mathcal{L}_f(s,t) \Big|  \leq   \sqrt{\frac{2}{\pi}}  \bold{B_\zs{2}}\; \rho(f,h) \int^T_\zs{t} \frac{\me^{ \varkappa(T-u)} }{\sqrt{u-t}}  \d  u
\\[2mm]
  \leq    \frac{2}{\pi} \bold{B_\zs{2}} \; \rho(f,h) \me^{ \varkappa(T-t)} \int^T_\zs{t} \frac{\me^{ -\varkappa(u-t)} }{\sqrt{u-t}}
 \d  u 
\le 
 \bold{B_\zs{2}}\;  \rho(f,h) \frac{\me^{ \varkappa(T-t)}}{\sqrt{\varkappa}} .
\end{multline*}
and,  therefore, 
$$
\big|\frac{\partial}{\partial s} \mathcal{L}_h(s,t)-  \frac{\partial}{\partial s} \mathcal{L}_f(s,t) \Big|  \leq   \bold{B_\zs{2}}  \rho(f,h) \frac{\me^{ \varkappa(T-t)}}{\sqrt{\varkappa}} .
$$
Thus
\begin{multline*}
\big| \cL_\zs{h}(s,t)-  \cL_\zs{f}(s,t) \big| + \bigg| \frac{\partial}{\partial s} \cL_\zs{h}(s,t)-  \frac{\partial}{\partial s}\cL_\zs{f}(s,t) \bigg| 
			\\
			 \leq   \Bigg( \frac{\bold{\tilde{B}_\zs{1}}}{\varkappa}   \me^{\varkappa (T-t)} +  \bold{B_\zs{2}} \; \frac{\me^{ \varkappa(T-t)}}{\sqrt{\varkappa}} \Bigg) \rho(f,h) \,.
\end{multline*}
So, taking into account that $ \varkappa >1$, we get
$$
\rho(\cL_{h}, \cL_{f})  \leq   \frac{\bold{\tilde{B}}_\zs{2}}{\sqrt{\varkappa}}  \rho(f,h) ,
\quad
\bold{\tilde{B}}_\zs{2}= \bold{\tilde{B}}_\zs{1}+ \bold{B_\zs{2}}.
$$
Choosing here $ \varkappa= (\bold{\tilde{B}}_\zs{2})^2/\lambda^2$
we obtain the inequality \cref{rho(Lh.Lf)}. 
Hence \cref{Pr.sec:FKm.3}.
\end{proof}

\begin{proposition}\label{Pr.sec:FKm.4}
For the mapping $  \cL $ there exists a unique fixed point $  h $   from $ \cX $,   i.e. $ \cL_\zs{h} = h $,   such that for any $ n\ge 1$   and  for any $ \varkappa > (\bold{\tilde{B}}_\zs{2})^2$
\begin{equation}
\label{sec:FKm.22}
\rho(h,h_\zs{n})\le \B^{*}\lambda^{n} ,
\quad 
\lambda= \frac{\bold{\tilde{B}}_\zs{2}}{\sqrt{\varkappa}},
\end{equation}
where
$
 \dsl\B^{*} =  (\B_\zs{0}+\B_\zs{1} )/(1- \lambda)$,
with $ \B_\zs{0}$ and $ \B_\zs{1}$ are defined in \cref{eq:B0.B1}.
\end{proposition}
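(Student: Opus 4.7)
The plan is to apply the Banach fixed-point theorem in the metric space $(\cX,\rho)$, using the three preceding propositions as the essential ingredients.

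First, I would combine the structural results established above: by \cref{Pr.sec:FKm.0} the space $(\cX,\rho)$ is complete; by \cref{Pr.sec:FKm.1} the mapping $\cL$ sends $\cX$ into itself; and by \cref{Pr.sec:FKm.3}, given any $0<\lambda<1$, one may choose $\varkappa$ large enough that $\rho(\cL_h,\cL_f)\le \lambda\,\rho(h,f)$ for all $h,f\in\cX$. Concretely, with the specific threshold $\varkappa>(\tilde{\B}_\zs{2})^2$, the contraction constant becomes $\lambda=\tilde{\B}_\zs{2}/\sqrt{\varkappa}<1$, matching the statement. Applying the Banach fixed-point theorem then yields a unique $h\in\cX$ with $\cL_\zs{h}=h$.

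Next I would derive the quantitative convergence bound \cref{sec:FKm.22}. Iterating the contraction inequality on the sequence $h_\zs{n}=\cL_\zs{h_\zs{n-1}}$ gives, for every $n\ge 1$,
\begin{equation*}
\rho(h_\zs{n+1},h_\zs{n})\le \lambda\,\rho(h_\zs{n},h_\zs{n-1})\le \lambda^{n}\rho(h_\zs{1},h_\zs{0}).
\end{equation*}
Since $h_\zs{0}\equiv 0$ and $h_\zs{1}=\cL_\zs{0}\in\cX$ by \cref{Pr.sec:FKm.1}, the definition of $\cX$ in \cref{sec:FKm.1} furnishes $\sup_\zs{s,t}|h_\zs{1}(s,t)|\le \B_\zs{0}$ and $\sup_\zs{s,t}|\partial_\zs{s} h_\zs{1}(s,t)|\le \B_\zs{1}$. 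Because the weight $\me^{-\varkappa(T-t)}$ in \cref{sec:FKm.2} is bounded by $1$, this gives $\rho(h_\zs{1},h_\zs{0})\le \B_\zs{0}+\B_\zs{1}$. Summing a geometric series along the telescoping chain,
\begin{equation*}
\rho(h_\zs{m},h_\zs{n})\le \sum_\zs{k=n}^{m-1}\rho(h_\zs{k+1},h_\zs{k})\le (\B_\zs{0}+\B_\zs{1})\sum_\zs{k=n}^{m-1}\lambda^{k}\le \frac{\B_\zs{0}+\B_\zs{1}}{1-\lambda}\,\lambda^{n},
\end{equation*}
for all $m>n$. Passing to the limit $m\to\infty$ and using continuity of $\rho(\cdot,h_\zs{n})$ along Cauchy sequences in the complete space $(\cX,\rho)$ yields $\rho(h,h_\zs{n})\le \B^{*}\lambda^{n}$ with $\B^{*}=(\B_\zs{0}+\B_\zs{1})/(1-\lambda)$.

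I do not anticipate a serious obstacle: once the three preceding propositions are in hand the proof is the standard Banach argument, and the only quantitative input specific to this setting is the a priori bound $\rho(h_\zs{1},0)\le \B_\zs{0}+\B_\zs{1}$, which is immediate from the fact that $\cL$ maps into $\cX$. The mildly delicate point worth stating explicitly is that the contraction inequality of \cref{Pr.sec:FKm.3} requires $\varkappa$ to be chosen in advance depending on the desired $\lambda$; with the assumption $\varkappa>(\tilde{\B}_\zs{2})^2$ in the proposition, this choice is made concrete and both $\lambda$ and $\B^{*}$ take the claimed form.
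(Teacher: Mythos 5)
Your proposal is correct and follows essentially the same route as the paper: both iterate the contraction from \cref{Pr.sec:FKm.3}, bound $\rho(h_\zs{1},h_\zs{0})\le \B_\zs{0}+\B_\zs{1}$ from the definition of $\cX$, telescope and sum the geometric series, and invoke completeness from \cref{Pr.sec:FKm.0} to pass to the limit. Your write-up is in fact slightly more explicit than the paper's (e.g.\ in noting that the weight $\me^{-\varkappa(T-t)}\le 1$), but there is no substantive difference in method.
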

\begin{proof}
We want to show that the approximation sequence $ (h_\zs{n})_\zs{n \geq 1}$ 
 converge to a fixed point $h $, where $h_0=0$  and  
$h_\zs{n}  = \cL_\zs{h_\zs{n-1}}$ for $n \geq 1$.
Using here \cref{Pr.sec:FKm.2}, we obtain that 
$\rho(h_\zs{n}, h_\zs{n+1}) =  \rho(\cL_\zs{h_\zs{n-1}}, \cL_\zs{h_\zs{n}})  \leq  \lambda  \rho(h_\zs{n-1}, h_\zs{n})$.
Therefore, 
$$
\rho(h_\zs{n}, h_\zs{n+1}) \leq  \lambda \rho(\cL_\zs{h_\zs{n-1}}, \cL_\zs{h_\zs{n}})  \leq  \lambda^2  \rho(h_\zs{n-2}, h_\zs{n-1})  \leq  ...  \leq  \lambda^n  \rho(h_\zs{0}, h_\zs{1}) .
$$
Note that \cref{sec:FKm.1} implies directly that $ \rho(h_\zs{0}, h_\zs{1}) \le \B_\zs{0}+\B_\zs{1}$.
So, for $ m>n $,
$$
\rho(h_\zs{n}, h_\zs{m}) \leq (\lambda^n+ \lambda^{n+1}+ ... + \lambda^{m-1}) (\B_\zs{0}+\B_\zs{1})
 \le 
 \sum_{i = n}^\infty \lambda^i 
 (\B_\zs{0}+\B_\zs{1})
 .
$$
Therefore, there exists $h$, such that $\rho (h_\zs{n}, h ) \rightarrow 0 $, i.e., for all $n$,
 we obtain \cref{sec:FKm.2}. Hence \cref{Pr.sec:FKm.4}.
\end{proof}
\section{Properties of the fixed-point function $h$}\label{sec:FP}

In this section we study some regularity properties for the function $ h$. First we study the smoothness with respect to the variable $ s $.
\begin{proposition} \label{prop: h/ s2-s1}
If $ h \in \cX$ is a fixed point for $ \cL$ i.e.  $ h = \cL_\zs{h} $, then for any 
$ 0  <  \beta  <  1 $,
$$
\sup_\zs{0  \leq  t  \leq  T} \sup_\zs{|s_\zs{1}| , |s_\zs{2}| }  \frac{ \big| h_\zs{s}(s_\zs{1}, t)- h_\zs{s}(s_\zs{2}, t) \big| }{|s_\zs{1}- s_\zs{2}|^\beta} < + \infty .
$$
\end{proposition}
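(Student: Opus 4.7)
Since $h$ is a fixed point, $h=\cL_h$, and so by \cref{Pr.sec:FKm.2} we have the integral representation
\begin{equation*}
h_\zs{s}(s,t) \;=\; \int^T_\zs{t}\!\!\int_\zs{\bbr} \Gamma_\zs{0}\!\bigl(z,u,h(z,u),h_\zs{s}(z,u)\bigr)\,\varrho(s,t,z,u)\,\d z\,\d u.
\end{equation*}
The first step is to observe that the integrand $\Gamma_\zs{0}(z,u,h(z,u),h_\zs{s}(z,u))$ is uniformly bounded on $\bbr\times[0,T]$: indeed $h\in\cX$ gives $0\le h\le \B_\zs{0}$ and $|h_\zs{s}|\le \B_\zs{1}$, while $g(t)\ge 0$ together with $h\ge 0$ forces $G(z,u,h(z,u))\le 1$ from \cref{sec:FKm.66}. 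So one reduces the problem to estimating
\begin{equation*}
I(\delta):=\sup_\zs{s_\zs{1},s_\zs{2}}\int^T_\zs{t}\!\!\int_\zs{\bbr}\bigl|\varrho(s_\zs{1},t,z,u)-\varrho(s_\zs{2},t,z,u)\bigr|\,\d z\,\d u,\qquad \delta=|s_\zs{1}-s_\zs{2}|.
\end{equation*}

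The second step is to produce two complementary $L^1_\zs{z}$-bounds on the increment of $\varrho$. The first is the trivial one, obtained by the triangle inequality from \cref{eqn: int(rho)<c}:
\begin{equation*}
\int_\zs{\bbr}|\varrho(s_\zs{1},t,z,u)-\varrho(s_\zs{2},t,z,u)|\,\d z \;\le\; \frac{2\c^{*}}{\sqrt{u-t}}.
\end{equation*}
The second comes from a direct Gaussian calculation on $\partial^2_\zs{s}\varphi$: writing $w=(z-s\mu)/\sigma_\zs{1}$ gives $\partial^2_\zs{s}\varphi=(\mu^2/\sigma_\zs{1}^2)(w^2-1)\varphi$, so $\int_\zs{\bbr}|\partial^2_\zs{s}\varphi|\,\d z\le C\mu^2/\sigma_\zs{1}^2$. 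Since $\mu(u,t)$ is uniformly bounded from above and below on $[0,T]$ and $\sigma_\zs{1}^2(u,t)\ge c(u-t)$ for some $c>0$, the mean value theorem in $s$ yields
\begin{equation*}
\int_\zs{\bbr}|\varrho(s_\zs{1},t,z,u)-\varrho(s_\zs{2},t,z,u)|\,\d z \;\le\; \frac{C\,\delta}{u-t}.
\end{equation*}

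The third and final step is an interpolation of the two bounds. For a cutoff $h_\zs{0}\in(0,T-t)$,
\begin{equation*}
I(\delta)\;\le\; \int_\zs{t}^{t+h_\zs{0}}\frac{2\c^{*}}{\sqrt{u-t}}\,\d u \;+\; C\,\delta\int_\zs{t+h_\zs{0}}^T\frac{\d u}{u-t}
\;\le\; 4\c^{*}\sqrt{h_\zs{0}}+C\,\delta\,\log(T/h_\zs{0}).
\end{equation*}
Given any $0<\beta<1$, choosing $h_\zs{0}=\delta^{2\beta}$ gives $I(\delta)\le C_\zs{\beta}\,\delta^{\beta}$ for all small $\delta$, and a global Lipschitz bound handles large $\delta$ via $|h_\zs{s}|\le\B_\zs{1}$. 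Multiplying by the uniform bound on $\Gamma_\zs{0}$ yields the claim.

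The only nontrivial ingredient is the second $L^1_\zs{z}$-estimate on $\partial^2_\zs{s}\varphi$, but this is a one-line Gaussian computation; everything else is bookkeeping. The splitting at $h_\zs{0}=\delta^{2\beta}$ is the standard way of trading the non-integrable singularity $1/(u-t)$ against the integrable singularity $1/\sqrt{u-t}$, and it is precisely what loses us the endpoint $\beta=1$.
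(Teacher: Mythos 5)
Your proposal is correct and follows essentially the same route as the paper: the same integral representation of $h_\zs{s}$ via $\varrho=\partial_\zs{s}\varphi$, the same two $L^1_\zs{z}$-bounds (the crude $\c^{*}/\sqrt{u-t}$ bound and the mean-value bound through $\varrho_\zs{s}=\partial^2_\zs{s}\varphi$ with $\int_\zs{\bbr}|\varrho_\zs{s}|\,\d z\le \c^{*}/\sigma_\zs{1}^2\le \c^{*}/(u-t)$), and the same splitting of the time integral at $\delta_\zs{1}=\delta^{2\beta}$, with large $\delta$ handled trivially. No gaps.
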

\begin{proof}
$$
\frac{\partial}{\partial s} h(s, t) =\int^T_\zs{t}\int_\zs{\bbr} \Psi_\zs{h}( z, u) \; \varrho( s, t, z, u)\d  z\d  u ,
$$
where $ \Psi_\zs{h}( z, u)$ and $ \varrho( s, t, z, u)$ are given in \cref{eqn: GAMMA0} and \cref{eq:varrho} respectively.
\\
Therefore,
$$
\bigg| \frac{\partial}{\partial s} h( s_\zs{1}, t)- \frac{\partial}{\partial s} h(s_\zs{2}, t) \bigg| 
 = 
 \bigg| \int^T_\zs{t} \Psi_\zs{h}( z, u) \bigg(  \int_\zs{\bbr}  \Big( \varrho( s_\zs{1},t, z, u)  -  \varrho( s_\zs{2},t, z, u)  \Big)  \d  z \bigg)     \d  u \bigg| .
$$
If $  \delta > 1$ then,
\begin{align*}
\frac{1}{ \delta^\beta}  \Big| \frac{\partial}{\partial s} h( s_\zs{1}, t)- \frac{\partial}{\partial s} h(s_\zs{2}, t) \Big|   \leq  &
\int^T_\zs{t}  \bigg( \int_\zs{\bbr}  \big| \varrho( s_\zs{1},t, z, u) \big|  \d  z \bigg) \;  \d  u  \\[2mm]
&+ \; 
\int^T_\zs{t}  \; \bigg( \int_\zs{\bbr}  \big| \varrho( s_\zs{2},t, z, u) \big|  \d  z \bigg)\;  \d  u \;  <  + \infty .
\end{align*}
For $ 0 <  \delta  =  |s_\zs{1}- s_\zs{2} | <1$,
Then, 
$$
\frac{1}{ \delta^\beta}  \Big| \frac{\partial}{\partial s} h( s_\zs{1}, t)- \frac{\partial}{\partial s} h(s_\zs{2}, t) \Big|   \leq  
\pmb{B}_\zs{0} \; \;  \int^T_\zs{t}  \Bigg(  \int_\zs{\bbr}  \frac{ \big| \varrho( s_\zs{1},t, z, u) - \varrho( s_\zs{2},t, z, u) \big| }{ \delta^\beta}   \d  z \Bigg)   \d  u
$$
where from \cref{Prop rep 2},  $ \int^T_\zs{t} \Psi_\zs{h}( z, u) \;  \d  u  \leq  \pmb{B}_\zs{0}$, and $ \pmb{B}_\zs{0}$ is 
 given in \cref{eq:B0.B1}.
Let 
$$
I( \delta) =  \int^T_\zs{t}  \Bigg(  \int_\zs{\bbr}  \frac{ \big| \varrho( s_\zs{1},t, z, u) - \varrho( s_\zs{2},t, z, u) \big| }{ \delta^\beta}   \d  z \Bigg)   \d  u .
$$
Then we can rewrite it as 
\begin{align*}
I( \delta) =  \int^{t+ \delta_\zs{1}}_\zs{t}   \int_\zs{\bbr}  \frac{ \big| \varrho( s_\zs{1},t, z, u) - \varrho( s_\zs{2},t, z, u) \big| }{ \delta^\beta}   \d  z  \;   \d  u \; 
\\[2mm]
+ \int^{T}_\zs{t+ \delta_\zs{1}}   \int_\zs{\bbr}  \frac{ \big| \varrho( s_\zs{1},t, z, u) - \varrho( s_\zs{2},t, z, u) \big| }{ \delta^\beta}   \d  z  \;    \d  u .
\end{align*}
Putting $ \delta_\zs{1}= \delta^{2 \beta}$
we obtain that
\begin{align*}
I( \delta)
 \leq  
\frac{1}{ \delta^\beta} \int^{t+ \delta_\zs{1}}_\zs{t}   \Bigg( \int_\zs{\bbr}   \big| \varrho( s_\zs{1},t, z, u) \big|  \d  z+ \int_\zs{\bbr} \big| \varrho( s_\zs{2},t, z, u) \big|    \d  z \Bigg)    \d  u
\\[2mm]
+ \frac{1}{ \delta^\beta} \int^{T}_\zs{t+  \delta_\zs{1}}   \int_\zs{\bbr}  \big| \varrho( s_\zs{1},t, z, u) - \varrho( s_\zs{2},t, z, u) \big|     \d  z  \d  u.
\end{align*}
Taking into account the bound \cref{Prob. sol. 2}, we estimate the integral $ I(\lambda)$ as
$$
I( \delta) \leq  \frac{ \sqrt{2}}{\sqrt{\pi} } 
\; + \; \frac{1}{ \delta^\beta} \int^{T}_\zs{t+ \delta_\zs{1}}   \int_\zs{\bbr}  \big| \varrho( s_\zs{1},t, z, u) - \varrho( s_\zs{2},t, z, u) \big|    \d z  \;  \d u .
$$
Then
$$
I( \delta) \leq  \frac{ \sqrt{2}}{\sqrt{\pi} } 
\; + \; \frac{1}{ \delta^\beta} \int^{T}_\zs{t +  \delta_\zs{1}}   \int^{s_\zs{2}}_\zs{s_\zs{1}} \; \Bigg( \int_\zs{\bbr}  \big|  \varrho_\zs{s} (v, t, z, u)  \big| \;   \d z \Bigg) \;  \d v \;  \d u ,
$$
where
$$
\varrho_\zs{s}  =   
								\frac{\partial}{\partial s} \varrho(v, t, z, u)
								= \frac{\mu^2}{\sqrt{2  \pi} \sigma_\zs{1}^3} \me^{\frac{-K^2}{2}} \Big( K^2 -1 \Big),
$$
where $ K= K(s, z, u)$ is given in \cref{eq:varphi}.
Thus 
$$
|\varrho_\zs{s}|   \leq  \frac{1}{\sqrt{2  \pi} \sigma_\zs{1}^3} \me^{\frac{-K^2}{2}} \Big( K^2 +1 \Big) 
$$
and
$$
\int_\zs{\bbr} |\varrho_\zs{s}(v, t, z, u) | \d z  =  
\frac{ 1}{ \sigma_\zs{1}^2}  \int_\zs{\bbr}    | K^2 +1 | \me^{\frac{-K^2}{2}}    \d K
 \le  \frac{\c^{*}}{\sigma_\zs{1}^2} .
$$
Taking into account that  
$
\sigma_\zs{1}^{-2}  \leq  \c^{*} (u-t)^{-1}
$
for some $ \c^{*}>0 $,
we get
$$
I( \delta) \leq  \frac{2 \sqrt{2}}{\sqrt{\pi } } 
\; 
+
 \;
  \c^{*}  \delta^{1-\beta} \int^{T}_\zs{t+  \delta_\zs{1}} \frac{1}{ u-t} \;   \d u
  								 \le
  								  \c^{*} + \delta^{1-\beta} (|\ln \delta_\zs{1}| + \vert \ln T\vert) .
$$
Hence \cref{prop: h/ s2-s1}.
\end{proof}
\\[2mm]Now we need to study the smoothness property with respect to $ t $.
We show now  that the function $ f$ and its derivatives are H\"oldarians.
\begin{proposition}
\label{Pro:h.hs(t2-t1)}
Let $ h = \cL_\zs{h}$, with $  h \in \cX $. 
Therefore, for all $ t$, for all $ N \geq 1$, and $  0< \beta < 1/2 $,
$$
\sup_\zs{\substack{0  \leq  t_\zs{1}  \leq  T \\[2mm]  0  \leq  t_\zs{2}  \leq  T}}
\; \; 
\sup_\zs{|s|  \leq  N } \Bigg(  \frac{\big| h(s, t_\zs{1}) - h(s, t_\zs{2}) \big| + \big| h_\zs{s}(s, t_\zs{1})- h_\zs{s}(s, t_\zs{2}) \big|}{|t_\zs{1}- t_\zs{2}|^\beta} \Bigg) < + \infty \; .
$$
\end{proposition}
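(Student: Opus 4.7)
The plan is to reuse the Feynman--Kac integral form $h = \cL_\zs{h}$ together with the splitting trick developed in \cref{prop: h/ s2-s1}. Writing $h(s,t) = \int_t^T \int_\bbr \Psi_\zs{h}(z,u) \varphi(s,t,z,u) \d z \d u$ and $h_\zs{s}(s,t) = \int_t^T \int_\bbr \Psi_\zs{h}(z,u) \varrho(s,t,z,u) \d z \d u$, recall that $|\Psi_\zs{h}| \le \B_\zs{0}$ uniformly on $\bbr \times [0,T]$ (this bound is already extracted inside the proof of \cref{Pr.sec:FKm.1}). Fix $0 \le t_\zs{1} < t_\zs{2} \le T$, put $\delta = t_\zs{2} - t_\zs{1}$, and split each difference as the near piece over $[t_\zs{1}, t_\zs{2}]$ (where only the $t_\zs{1}$-density contributes) plus the far piece over $[t_\zs{2}, T]$ (where the two densities must be compared).

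For $h(s,t_\zs{1}) - h(s,t_\zs{2})$ the near piece is trivially bounded by $\B_\zs{0} \delta$. For the far piece I would use the Markov property of $\eta^{s,t}_\zs{u}$ to rewrite the integrand as $\E[F(\eta^{s,t_\zs{1}}_\zs{t_\zs{2}}, u) - F(s, u)]$, where $F(x,u) = \E \Psi_\zs{h}(\eta^{x,t_\zs{2}}_\zs{u}, u)$ has derivative $F_\zs{x}(x,u) = \int \Psi_\zs{h}(z,u) \varrho(x,t_\zs{2},z,u) \d z$ bounded for $|x| \le N$ by \cref{eqn: int(rho)<c}. Combined with the Gaussian estimate $\E |\eta^{s,t_\zs{1}}_\zs{t_\zs{2}} - s| = O(\sqrt{\delta})$ for bounded $s$ (which follows from $\mu(t_\zs{2},t_\zs{1}) - 1 = O(\delta)$ and $\sigma_\zs{1}(t_\zs{2},t_\zs{1}) = O(\sqrt{\delta})$), this yields $|h(s,t_\zs{1}) - h(s,t_\zs{2})| \le C\sqrt{\delta}$, hence a H\"older-$\beta$ bound for any $\beta<1/2$.

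For $h_\zs{s}(s,t_\zs{1}) - h_\zs{s}(s,t_\zs{2})$ the near piece is $O(\sqrt{\delta})$ directly from $|\Psi_\zs{h}| \le \B_\zs{0}$ and $\int|\varrho(s,t_\zs{1},z,u)|\d z \le c^{*}/\sqrt{u-t_\zs{1}}$. For the far piece I would reproduce the trick of \cref{prop: h/ s2-s1}: set $\delta_\zs{1} = \delta^{2\beta}$ and decompose $\int_\zs{t_\zs{2}}^T = \int_\zs{t_\zs{2}}^{t_\zs{2}+\delta_\zs{1}} + \int_\zs{t_\zs{2}+\delta_\zs{1}}^T$. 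On the short interval the triangle inequality together with the $1/\sqrt{u-t_\zs{j}}$ bound gives an $O(\sqrt{\delta_\zs{1}})$ contribution. On the long interval apply the mean value theorem in $t$, $\varrho(s,t_\zs{1},z,u) - \varrho(s,t_\zs{2},z,u) = \int_{t_\zs{1}}^{t_\zs{2}} \varrho_\zs{t}(s,\tau,z,u) \d \tau$, and use a derivative estimate of the form $\int_\bbr |\varrho_\zs{t}(s,\tau,z,u)| \d z \le C(u-\tau)^{-3/2}$; integration from $t_\zs{2}+\delta_\zs{1}$ produces $O(\delta/\sqrt{\delta_\zs{1}}) = O(\delta^{1-\beta})$. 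Balancing $\sqrt{\delta_\zs{1}}\sim\delta^{1-\beta}$ fixes the choice $\delta_\zs{1}=\delta^{2\beta}$ and forces the ceiling $\beta<1/2$.

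The main obstacle will be justifying the density-derivative estimate $\int_\bbr |\varrho_\zs{t}(s,\tau,z,u)|\d z \le C(u-\tau)^{-3/2}$. Since $\varrho = K\mu\varphi/\sigma_\zs{1}$ with $\sigma_\zs{1}^2(u,t)\sim\sigma^2(u-t)$ as $u\to t$, differentiating $\mu$, $\sigma_\zs{1}$ and $K$ in $t$ and then integrating the Gaussian against the resulting polynomials in $K$ must be done carefully, but the outcome is exactly one extra factor of $1/\sigma_\zs{1}^2 \sim 1/(u-t)$ compared with the bound $\int|\varrho|\d z \le c^{*}/\sqrt{u-t}$ already used inside \cref{Pr.sec:FKm.3}. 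This is the parabolic analogue of the earlier estimate, and the extra singular factor is precisely what caps the H\"older exponent at $1/2$ in the statement.
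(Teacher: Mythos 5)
Your argument is correct and reaches the stated conclusion, but both halves run along genuinely different lines from the paper's proof. For the increment of $h$ itself, the paper differentiates the Gaussian kernel $\varphi$ in $t$, proves $\int_\zs{\bbr}|\varphi_\zs{t}(s,\theta,z,u)|\,\d z\le \c^{*}(1+|s|)/(u-\theta)$, and splits the time integral at $t_\zs{2}+\delta^{2\beta}$ to obtain a bound of order $\delta^{2\beta}+\delta|\ln\delta|$; you instead invoke the Markov property of $\eta^{s,t}$, reduce the far piece to a Lipschitz estimate on the smoothed function $F(x,u)=\E\,\Psi_\zs{h}(\eta^{x,t_\zs{2}}_\zs{u},u)$ --- whose $x$-derivative is controlled \emph{uniformly in $x$} (not just for $|x|\le N$, which is what you actually need since $\eta^{s,t_\zs{1}}_\zs{t_\zs{2}}$ is unbounded) by \cref{Lem: Prob.sol} --- and pay only $\E|\eta^{s,t_\zs{1}}_\zs{t_\zs{2}}-s|=O(\sqrt{\delta})$ for $|s|\le N$; this is a cleaner route and even yields the exponent $1/2$ for this half. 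For the increment of $h_\zs{s}$, the paper factors $h_\zs{s}=\int \q_\zs{1}\q_\zs{2}\,\d u$ with $\q_\zs{1}=\E\,\xi\Psi_\zs{h}(s\mu+\sigma_\zs{1}\xi,u)$ and $\q_\zs{2}=\mu/\sigma_\zs{1}$, and must invoke the spatial H\"older continuity of $\Psi_\zs{h}$, i.e.\ \cref{prop: h/ s2-s1} via \cref{eq: Psih(s2)-Psih(s1)}, to control $\q_\zs{1}(u,t_\zs{2})-\q_\zs{1}(u,t_\zs{1})$; you keep $\Psi_\zs{h}$ frozen and push the whole $t$-increment onto the kernel $\varrho$, which requires $\int_\zs{\bbr}|\varrho_\zs{t}(s,\tau,z,u)|\,\d z\le C(1+|s|)(u-\tau)^{-3/2}$. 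That estimate is correct: since $\partial_\zs{t}\mu$ and $\partial_\zs{t}\sigma_\zs{1}^{2}$ are $O(1)$, differentiating $\varrho=\mu K\me^{-K^2/2}/(\sqrt{2\pi}\sigma_\zs{1}^{2})$ produces polynomials in $K$ times $\sigma_\zs{1}^{-4}$, and the change of variables $\d z=\sigma_\zs{1}\d K$ gives exactly one extra factor $\sigma_\zs{1}^{-2}\sim(u-t)^{-1}$ over \cref{eqn: int(rho)<c}, in parallel with the $\varrho_\zs{s}$ computation already done in \cref{prop: h/ s2-s1}. Your route has the advantage of not depending on \cref{prop: h/ s2-s1} at all, at the cost of this more singular kernel bound; both routes end with the same $\delta^{\beta}$ versus $\delta^{1-\beta}$ balance and hence the same ceiling $\beta<1/2$. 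The only loose ends are bookkeeping: the uniform bound on $\Psi_\zs{h}$ is of order $\B_\zs{0}/T$ rather than $\B_\zs{0}$, and the $(1+|s|)$ factor in the kernel-derivative bound is harmless only because the supremum is taken over $|s|\le N$.
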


\begin{proof}
Firstly, note that
$$
 h(s,t) = \int^T_\zs{t} \Gamma (s,  t, u)  \d u
 \quad
 \mbox{and}
 \quad
  \Gamma (s, t, u) =   \int_\zs{\bbr} \; \Psi_\zs{h}( z, u)  \varphi(s, ,t , z, u) \;   \d z .
 $$
Therefore, for any $ 0 \le t_\zs{1} \le t_\zs{2} \le T$
$$
 h(s, t_\zs{2} )-  h(s, t_\zs{1} )  =  \int^T_\zs{t_\zs{2}} 
 						\Big( \Gamma (s, t_\zs{2}, u)  \d u
 						-  \Gamma (s, t_\zs{1}, u) \Big)  \d u
 - \int^{t_\zs{2}}_\zs{t_\zs{1}} \Gamma (s, t_\zs{1}, u)  \d u \; .
 $$
 Let now $ \delta= t_\zs{2}- t_\zs{1}$ and $\delta_\zs{1}= \delta^{2 \beta}$ for some $0 < \beta < 1/2$.
 Talking into account that $ \Gamma$   is bounded, we obtain that for some $ \c^{*}>0$ 
 \begin{equation}\label{eqn: h(s, t2)-h(s, t1)}
\frac{1}{ \delta^\beta} \big| h(s, t_\zs{2})-  h(s, t_\zs{1} ) \big|
 \le 
\frac{\c^{*}}{ \delta^{\beta}}   I( \delta) + c \;  \delta ^{1 - \beta} \; ,
 \end{equation}
 where
 $
 I(\delta)=   \int^{T}_\zs{t_\zs{2}} \int_\zs{\bbr}\vert \Omega(z, u)\vert \d z \d u$
 and $\Omega(z, u)=\varphi (s, t_\zs{2}, z, u) - \varphi (s, t_\zs{1}, z, u)$. 
 We reprent this term as $
 I(\delta)=  I_\zs{1}( \delta)+I_\zs{2}(\delta)$,
 where
 $$
 I_\zs{1}( \delta)=\int^{t_\zs{2}+ \delta_\zs{1}}_\zs{t_\zs{2}}   \int_\zs{\bbr}\vert\Omega(z, u)\vert \d z \d u
 \quad
 \mbox{and} 
 \quad
 I_\zs{2}(\delta)= \int^{T}_\zs{t_\zs{2} +  \delta_\zs{1}}\int_\zs{\bbr}\vert\Omega(z, u)\vert \d z \d u.
 $$
It is clear that  $I_\zs{1}( \delta)\le 2 \delta_\zs{1}$. To estimate the term $I_\zs{2}(\delta)$ note that
$$
\vert \Omega(z, u)\vert
=
|\varphi (s, t_\zs{2},z,u) - \varphi (s,t_\zs{1},z,u) | 
 \leq
\int^{t_\zs{2}}_\zs{t_\zs{1}} \big| \varphi_\zs{t} (s, \theta, z, u) \big|  \d \theta 
,
$$
where
$$
\varphi_\zs{t} (s, \theta, z, u)=
\frac{\partial}{\partial t}  \varphi (s, t, z, u) =  \Bigg( \frac{\sigma_\zs{2}(u, t)}{2 \; \sqrt{2 \pi} \; \sigma^3_\zs{1}(u, t)}\;  - \frac{K K'}{\sqrt{2 \pi}  \sigma_\zs{1}} \Bigg) 
\me^{-\frac{K^2}{2}} ,
$$
the prime $'$ is the derivative with respect to $t$ and $\sigma_\zs{2}=(\sigma_\zs{1})'$. Denoting by  $\mu_\zs{1}=\mu'$
we obtain that
\begin{align*}
K'  = \Big( \frac{z- s \mu}{\sigma_\zs{1}}  \Big)' \; 
 =  \; \frac{s \mu_\zs{1}}{\sigma_\zs{1}} - \frac{z- s \mu }{\sigma^2_\zs{1}} (\sigma_\zs{1})' 
  =    - \frac{s \mu_\zs{1}}{\sigma_\zs{1}} - \frac{1}{2} \; K \; \frac{\sigma_\zs{2}}{2 \sigma^2_\zs{1}} .
\end{align*}
Taking into account that,  
$\mu_\zs{1}$ is bounded, we obtain that  for some $ \c^{*}>0$
\begin{align*}
\bigg|\frac{\partial}{\partial t}  \varphi (s, t, z,  u) \bigg| 
& 
 \leq    \c^{*} \; (1+ |s|)  \me^{-\frac{K^2}{2}}   \frac{(K^2+ |K|+ 1)}{\sigma^3_\zs{1}} \; .
\end{align*}
Therefore, for some $ \c^{*}>0$ and $ u > t$
$$
\int_\zs{\bbr} \bigg|\frac{\partial}{\partial t}  \varphi (s, t, z,  u) \bigg|  \;  \d z 
 \leq   \frac{\c^{*}  (1 + |s|)}{u-t} ,
$$
and we get 
\begin{align*}
|I_\zs{2}( \delta)| 
& \leq    \c^{*} \;  (1+ |s|) \; \int^{t_\zs{2}}_\zs{t_\zs{1}} \; \Bigg( \int^{T}_\zs{t_\zs{2}+  \delta_\zs{1}}  \frac{1}{ u-\theta}   \d u \Bigg) \;  \d \theta
								\\[2mm]
								&  \le   
								 \c^{*} \;  (1+ |s|)  \;  \delta \; \int^{T}_\zs{t_\zs{2}+  \delta_\zs{1}} \frac{ \d u}{ u- t_\zs{2}} 
								\le  	\c^{*} \; (1+ |s|)  \delta  |\ln  \delta_\zs{1}| . 
\end{align*}
Therefore, for some $\c^{*}>0$
$$
\limsup_\zs{\delta\to 0}
\frac{1}{\delta^\beta} \big| h(s, t_\zs{2})-  h(s, t_\zs{1} ) \big|  
							\leq
							   \c^{*} \; (1+ |s|) .
$$
Now to prove the second part we firstly take the partial derivative of the function $ h$ which may represented by
\begin{equation}\label{eq: hs}
\frac{\partial}{\partial s} h(s, t) =  \frac{1}{\sqrt{2 \pi}} \int^T_\zs{t} \frac{\mu(u, t)}{\sigma^2_\zs{1}(u, t)} \Bigg( \int_\zs{\bbr} \Psi_\zs{h}(u, z) \;  K \; \me^{- \frac{K^2}{2}}\;   \d z \Bigg) \;   \d u .
\end{equation}
Then, 
\begin{align*}
\frac{\partial}{\partial s} h(s, t)
&
 =   \int^T_\zs{t} \frac{\mu(u, t)}{\sigma_\zs{1}(u, t)} \Bigg( \int_\zs{\bbr} \Psi_\zs{h}(s \mu + \sigma_\zs{1} \; K, u) K \; \frac{\me^{- \frac{K^2}{2}}}{\sqrt{2 \pi}}  \d K \Bigg)  \d u
\\[2mm]
&
 =  \int^T_\zs{t} \frac{\mu(u, t)}{\sigma_\zs{1}(u, t)} \Bigg(  \E \; \Psi_\zs{h}(s \mu(u, t) + \sigma_\zs{1}(u, t) \; \xi, u) \xi \Bigg)  \d u ,
\end{align*}
where  $ \xi  \thicksim \cN(0, 1)$.
%
So, we can represent the derivative \cref{eq: hs} as
$$
\frac{\partial}{\partial s} h(s, t) =   \int^T_\zs{t} \q(t,u)  \d u 
\quad
\mbox{and}
\quad
\q(t,u) =  \q_\zs{1}(t,u) \q_\zs{2}(t,u) ,
$$
where
$\q_\zs{1}(t,u) =  \E \xi \Psi_\zs{h}(s\mu(u, t) + \sigma_\zs{1}(u, t)\xi,u)$
and $\q_\zs{2}(t,u)= \mu(u, t)/\sigma_\zs{1}(u, t)$. 
Setting now
$\q_\zs{3}(u)  =   \q(t_\zs{2},u) - \; \q(t_\zs{1},u)$,
we obtain that
$$
\frac{\partial}{\partial s} h(s, t_\zs{2})- \frac{\partial}{\partial s} h(s, t_\zs{1})
=\int^T_\zs{t_\zs{2}}  \q_\zs{3}(u)   \d u - \int^{t_\zs{2}}_\zs{t_\zs{1}} \q(t_\zs{1},u)   \d u .
$$
Now we recall, that the function $\Psi_\zs{h}$ is bounded, i.e. $\vert \q(t,u)\vert \le \c^{*}/\sqrt{u-t} $ for some $\c^{*}>0$. 
Therefore,
\begin{align*}
\bigg| \frac{\partial}{\partial s} h(s, t_\zs{2})- \frac{\partial}{\partial s} h(s, t_\zs{1}) \bigg| 
						& \leq  
						\int^T_\zs{t_\zs{2}}  \big| \q_\zs{3}(u)   \big|  \d u
+\int^{t_\zs{2}}_\zs{t_\zs{1}} \frac{\c^{*}}{\sqrt{u-t_\zs{1}}}   \d u 
						\\[2mm]
						& \leq  I_\zs{1}^*( \delta) + I_\zs{2}^*(\delta) + 2 \c^{*} \sqrt{ \delta}  ,
\end{align*}
where
\begin{align*}
 I^*_\zs{1}( \delta) =  \int^{t_\zs{2}+  \delta_\zs{1}}_\zs{t_\zs{2}}  \big| \q_\zs{3}(u)  \big|  \d u
 \quad
 \mbox{and}
 \quad 
  I^*_\zs{2}( \delta) =  \int^{T}_\zs{t_\zs{2}+  \delta_\zs{1}}  \big| \q_\zs{3}(u)   \big|  \d u .
 \end{align*}
Similarly,  for $0 < t_\zs{1} < t_\zs{2}$
\begin{align*}
I^*_\zs{1}( \delta)   \leq   \c^{*} \int^{t_\zs{2}+  \delta_\zs{1}}_\zs{t_\zs{2}} \Bigg( \frac{1}{\sqrt{u- t_\zs{2}}} + \frac{1}{\sqrt{u- t_\zs{1}}} \Bigg)  \d u \quad
 					\le
                    4 \c^{*} \sqrt{ \delta_\zs{1}}.
\end{align*}
To estimate $I^*_\zs{2}( \delta) $, note that
\begin{align*}
\big| \q_\zs{3}(u)  \big| 
& =  \big| \q_\zs{1}(u, t_\zs{2}) \; \q_\zs{2}(u, t_\zs{2})   -   \q_\zs{1}(u, t_\zs{1}) \; \q_\zs{2}(u, t_\zs{1})  \big|  
\\[2mm]
& \leq  \Big|  \q_\zs{2}(u, t_\zs{2}) \;\Big(  \q_\zs{1}(u, t_\zs{2}) -  \q_\zs{1}(u, t_\zs{1}) \Big) \Big|
+
\Big| \q_\zs{1}(u, t_\zs{1})   \;\Big(  \q_\zs{2}(u, t_\zs{2})- \q_\zs{2}(u, t_\zs{1}) \Big) \Big|.
\end{align*}
Moreover, noting that
$$
\q_\zs{2}(u, t)  =  \frac{\mu(u, t)}{\sigma_\zs{1}(u,t)}  \leq  \frac{1}{\sigma_\zs{1}(u,t)}  \leq  \frac{\c^{*}}{\sqrt{u-t}} ,
 $$
we obtain that for $ u>t $,
$$
\big| \q_\zs{3}(u)   \big|   
					\leq
					c \bigg(  \big| \q_\zs{2}(u, t_\zs{2}) -  \q_\zs{2}(u, t_\zs{1}) \big| + \frac{1}{\sqrt{u-t_\zs{1}}} \big| \q_\zs{1}(u, t_\zs{2})- \q_\zs{1}(u, t_\zs{1})  \big| \bigg) .
$$
From the definition of $ q_\zs{1}$, we can obtain that for some $ \c^{*}>0$
$$
\big| \q_\zs{2}(u, t_\zs{2}) -  \q_\zs{2}(u, t_\zs{1}) \big| 
 \leq  \int^{t_\zs{2}}_\zs{t_\zs{1}}  \frac{1}{(u- \theta)^{\frac{3}{2}}}  \d \theta
 \leq  \frac{ \delta}{(u- t_\zs{2})^{\frac{3}{2}}} .
$$
and
$$
\big| \q_\zs{3}(u)    \big|   \leq  \frac{ \delta}{(u- t_\zs{2})^{\frac{3}{2}}}  
						\; + \; 
						 \frac{\c^{*}}{\sqrt{u-t_\zs{1}}} \big| \q_\zs{1}(u, t_\zs{2})- \q_\zs{1}(u, t_\zs{1})  \big| .
$$
It should be noted that \cref{prop: h/ s2-s1} implies that for any  $ 0 < \beta <1 $ and for some $\c^{*}>0$,
\begin{equation}\label{eq: Psih(s2)-Psih(s1)}
\Big| \Psi_\zs{h}(s_\zs{2}, t)- \Psi_\zs{h} (s_\zs{1}, t) \Big|  \leq  \c^{*}|s_\zs{2}-  s_\zs{1}|^\beta.
\end{equation}
So,
$$
\big| \q_\zs{1}(u, t_\zs{2})- \q_\zs{1}(u, t_\zs{1})  \big|
							 \leq 
							 \c^{*} \; (1+ |s|^\beta) \Big(  | \mu(u, t_\zs{2}) - \mu(u, t_\zs{1}) |^\beta +\big| \sigma_\zs{1}(u, t_\zs{2})- \sigma_\zs{1}(u, t_\zs{1}) \big|^\beta  \Big) .
$$
We recall that $|\sigma_\zs{1}(u, t_\zs{2})- \sigma_\zs{1}(u, t_\zs{1})| \le \delta/ \sqrt{u-t_\zs{2}} $.
Therefore, 
$$
 \big| \q_\zs{1}(u, t_\zs{2})- \q_\zs{1}(u, t_\zs{1})  \big| 
						  \leq 
						  (1+ |s|^\beta) \Big( \frac{ \delta^\beta }{(\sqrt{u- t_\zs{2}})^\beta} \Big)
						    \leq 
						    (1+ |s|^\beta) \Big( \frac{ \delta^\beta }{(\sqrt{u- t_\zs{2}})^\beta} \Big).
$$
Thus, 
$$
\big| \q_\zs{3}(u)  \big|   \leq  \frac{ \delta}{(u- t_\zs{2})^{\frac{3}{2}}} \; + \; \frac{(1+ |s|^\beta) }{\sqrt{u- t_\zs{2}}} \; \frac{ \delta^\beta}{(\sqrt{u- t_\zs{2}})^\beta}.
$$
Therefore,
\begin{align*}
I^*_\zs{2}( \delta) 
&
 \leq   (1+ |s|^\beta) \int^T_\zs{t_\zs{2}+  \delta_\zs{1}} \Bigg( \frac{ \delta}{(u- t_\zs{2})^{\frac{3}{2}}} \; + \; \frac{ \delta^\beta}{(u- t_\zs{2})^{\frac{\beta}{2}+ \frac{1}{2}} } \Bigg)  \d u
							\\[2mm]
							 &\leq   
							\Bigg( \frac{ \delta}{\sqrt{ \delta_\zs{1}}} +  \delta^{\beta} ( \delta_\zs{1})^{\frac{1- \beta}{2}}  \Bigg) \Big( 1+ |s|^\beta \Big) .
\end{align*}
Therefore, for any $ 0 < \beta < 1/2$
$$
\overline{\lim}_\zs{\delta \rightarrow 0} \frac{I^*_\zs{1}(\delta) + I^*_\zs{2}(\delta)}{\delta^\beta} < + \infty .
$$
Hence, \cref{Pro:h.hs(t2-t1)}.
\end{proof}

\bigskip

\section{Proofs}\label{sec:Pr}
\subsection{Proof of  \cref{sec:solY}}

 Let $ h \in \cX$  
be the fixed point for the mapping $\cL$, i.e. $h=\cL_\zs{h}$. 
 Consider now the following equation 
 \begin{equation}\label{eq: Y_t}
Y_\zs{t}(s,t) + \frac{\sigma^2  Y_\zs{ss}(s,t)}{2} + s g_1(t) Y_\zs{s}(s,t) + \Psi_\zs{h}(s, t)=0,
\quad
Y(s,T)=0 ,
\end{equation}
where $ g_\zs{1}(t) =  \gamma_\zs{1}  g(t) -  \gamma_\zs{2} >0 $ 
and $\Psi_\zs{h}(s, t)$ is given in \cref{sec:FKm.3}.
Then we change the variables as
$ u(s,t)= Y(s, T-t)$, so we get
\begin{equation}\label{eq: u_t}
u_\zs{t}(s,t) - \frac{\sigma^2  u_\zs{ss}(s,t)}{2} - s g_1(t) u_\zs{s}(s,t) - \Psi_\zs{h}(s, t)=0,
\quad
u(s,0)=0 .
\end{equation}
We can rewrite the previous equation as 
$$
u_\zs{t}(s,t) - \frac{\sigma^2  u_\zs{ss}(s,t)}{2} +a(s, t, u, u_\zs{s})=0,
\quad
u(s,0)=0 ,
$$
where $a(s, t, u, p)= - s g_1(t) p - \Psi_\zs{h}(s, t)$.
Taking into account that
$$
\Psi_\zs{max}=
\sup_\zs{s \in \bbr} \sup_\zs{t} \Psi_\zs{h}(s, t) 
<\infty
,
$$
we obtain that 
$
a(s, t, u, 0)u= - \Psi_\zs{h}(s, t)|u| \ge - \Psi_\zs{max} |u|$, 
i.e.  the condition in \cref{sec:CaPr.1} holds with
$\Phi(r)= \Psi_\zs{max}$ and
$b=0$.
In view of Propositions \ref{prop: h/ s2-s1} and \ref{Pro:h.hs(t2-t1)}, the function $ \Psi_\zs{h}$ satisfies the H\"older condition $ C_\zs{5})$ for any  
$ 0< \beta<1/2$.
By using \cref{CaPrt00 theorem} we obtain that \cref{eq: u_t} has a bounded solution. Therefore, there exists a solution of \cref{eq: Y_t}. 
In order to prove this proposition we use the probabilistic representation.
 Now, we define a stopping time $ \tau_\zs{n}$
  $$
 \tau_\zs{n} = \inf\big\{n\geq t: |\eta^{s,t}_\zs{u}| \geq n\big\} \wedge T , 
 $$
 where the process $(\eta^{s,t}_\zs{u})_\zs{u\ge t}$
 is defined in  \cref{sec:FKm.4}. 
By the It\^o formula we obtain that
\begin{align*}
Y(s,t)  = & -\int^{\tau_\zs{n}}_\zs{t}  \Big( (Y_\zs{t}( \eta^{s,t}_\zs{u},u) + g_1(u)  \eta^{s, t}_\zs{u}  Y_\zs{s}( \eta^{s,t}_\zs{u},u)  +\frac{\sigma^2}{2}  Y_\zs{ss}( \eta^{s,t}_\zs{u},u) \Big) \;  \d  u \\&-  \int^{\tau_\zs{n}}_\zs{t} Y_\zs{s}( \eta^{s,t}_\zs{u},u) \;  \d \check{W}_\zs{u} +Y( \eta^{s, t}_\zs{\tau_\zs{n}}, \tau_\zs{n}) .
\end{align*}
taking into account the equation \cref{sec:HJB.0}
we obtain that
$$
Y(s,t) =  \int^{\tau_\zs{n}}_\zs{t}  \Psi_\zs{h}( \eta^{s,t}_\zs{u}, u) \;  \d  u -  \int^{\tau_\zs{n}}_\zs{t} Y_\zs{s}( \eta^{s,t}_\zs{u},u)\;  \d  \check{W}_\zs{u} +Y(\tau_\zs{n}, \eta^{s, t}_\zs{\tau_\zs{n}})  .
$$
Taking into account that $\E \int^{\tau_\zs{n}}_\zs{t} Y_\zs{s}( \eta^{s,t}_\zs{u},u) \;  \d  \check{W}_\zs{u}=0$, we obtain
$$
Y(s,t) =  \E \int^{\tau_\zs{n}}_\zs{t}  \Psi_\zs{h}( \eta^{s,t}_\zs{u}, u) \;  \d  u
+\E Y(\tau_\zs{n}, \eta^{s, t}_\zs{\tau_\zs{n}}) .
$$
Note here that the solution of the equation \cref{eq: Y_t}
 is bounded. So,  by Dominated Convergence theorem and in view of the boundary condition in \cref{eq: Y_t}
we obtain that
\begin{align*}
 \lim_\zs{n \rightarrow \infty} \E Y( \eta^{s, t}_\zs{\tau_\zs{n}}, \tau_\zs{n})
 					= \E \lim_\zs{n \rightarrow \infty} Y( \eta^{s, t}_\zs{\tau_\zs{n}}, \tau_\zs{n})
 					= \E Y( \eta^{s, t}_\zs{T}, T)=0 .
\end{align*}
Moreover, taking into account that $ \Psi_\zs{h} \ge 0 $,
by the Monotone Convergence theorem we obtain
$$
Y(s,t)
 = \E \lim_\zs{n \rightarrow \infty} \int^{\tau_\zs{n}}_\zs{t}  \Psi_\zs{h}( \eta^{s,t}_\zs{u},u) \;  \d  u
							  =  
							 \E \int^{T}_\zs{t} \Psi_\zs{h}( \eta^{s,t}_\zs{u},u) \d u ,
$$
i.e. $Y(s,t)=\cL_\zs{h}(s,t)  = h$.
Hence \cref{sec:solY}. \fdem

\subsection{Proof of \cref{sec: optimal 3}}
To proof this theorem we use the verification theorem (\ref{VTh: A1}) and find the solution to the  HJB equation  using FK mapping with $h$ a fixed point for the mapping $\cL$.
Therefore, the function 
\begin{equation} \label{eq: z}
z(\varsigma, t) = \varpi x^\gamma U(s, t) 
\quad
\mbox{and}
\quad
U(s,t)= \exp{\Bigg\{ \frac{s^2}{2} g(t)+ h(s,t)\Bigg\}} ,
\end{equation}
is the solution of the HJB equation \cref{sec:HJB.4}.
By using this function we calculate the optimal control variables in \cref{sec:alpha_c_00} and we obtain the strategies \cref{eq:dXt*} - \cref{opt. stgy.11}.
Hence $\H_\zs{3}$). 
Now we want to check condition $\H_\zs{4}$). First note that the equation 
$$
 \d \varsigma^*_\zs{t} =  a^*( \varsigma^*_\zs{t}, t)  \d t + b^*( \varsigma^*_\zs{t}, t)  \d W_\zs{t}, \qquad t \geq 0, \quad \varsigma^*_0 = x
$$
is identical to the equation (\ref{sec:HJB.1}).
By the assumptions on the market parameters, all the coefficients of (\ref{sec:HJB.1}) are continuous and bounded. so the usual integrability and Lipschitz conditions are satisfied, this implies $\H_\zs{4}$).\\
\begin{lemma} \label{sec: verif 1}
There exists some  $ \check{\delta}>1$  such that for any  $ 0 \le t < T < T_\zs{0}$,  
$$
\sup_\zs{\tau \in \cM_\zs{t}}  \E  \Big( Z^{{\check{\delta}}}(\varsigma_\zs{\tau}, \tau )   |  \varsigma_\zs{t}= \varsigma \Big) < + \infty ,
$$
where $ \cM_\zs{t}$ is the set of all stopping times  with $ t \le \tau \le T$ and
the function $ z$ is given in \cref{eq: z}.
\end{lemma}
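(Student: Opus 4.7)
The plan is to reduce the bound on $\E[z^{\check{\delta}}(\varsigma_\zs{\tau},\tau)\mid\varsigma_\zs{t}=\varsigma]$ to two exponential-type moment estimates, one for the OU state $S$ and one for the wealth process $X^{*}$ under the optimal strategy, and then to fix $\check{\delta}>1$ by exploiting the smallness $T<T_\zs{0}$.

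First I would use the explicit form of $z$ from \cref{eq: z}. Since $h\in\cX$ implies $0\le h(s,t)\le \B_\zs{0}$ and $g$ is decreasing with $g(t)\le g(0)$ on $[0,T]$,
$$
z^{\check{\delta}}(\varsigma,t)\le \varpi^{\check{\delta}}\me^{\check{\delta}\B_\zs{0}} x^{\gamma\check{\delta}}\exp\Big(\tfrac{\check{\delta} g(0)}{2}s^2\Big).
$$
I would then apply Hölder's inequality with conjugate exponents $p,q>1$ to decompose
$$
\E\big[z^{\check{\delta}}(\varsigma_\zs{\tau},\tau)\mid \varsigma_\zs{t}=\varsigma\big]\le C\,\big(\E X_\zs{\tau}^{\gamma\check{\delta} p}\big)^{1/p}\,\big(\E\exp(q\check{\delta} g(0)S^2_\zs{\tau}/2)\big)^{1/q},
$$
so the problem reduces to controlling two $\tau$-uniform moments.

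For the $S$-factor I would use the OU representation $S_\zs{u}=\me^{-\kappa(u-t)}(s+N_\zs{u})$ with $N_\zs{u}=\sigma\int_\zs{t}^u \me^{\kappa(v-t)}\d W_\zs{v}$ a continuous martingale whose quadratic variation on $[t,T]$ is bounded by $\sigma^2\me^{2\kappa T}/(2\kappa)$. Since $S_\zs{u}^2\le 2s^2+2N_\zs{u}^2$, one gets $\sup_\zs{\tau}\E\me^{\lambda S^2_\zs{\tau}}\le \me^{2\lambda s^2}\,\E\me^{2\lambda\sup_\zs{t\le u\le T}N^2_\zs{u}}$, and the latter is finite by the exponential martingale inequality as soon as $\lambda$ lies strictly below an explicit multiple of $\kappa/(\sigma^2\me^{2\kappa T})$. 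This is the first threshold that $q\check{\delta} g(0)$ must respect. For the wealth factor, $X^{*}_\zs{\tau}$ solves the linear SDE \cref{eq:dXt*} and admits the representation $x\exp(\int_\zs{t}^\tau a^{*}(u)\d u)\,\cE_\zs{t,\tau}(b^{*})$. Because $\check{\beta}(s,t)$ is linear in $s$ modulo the bounded remainder $Y_\zs{s}$ and $\check{G}$ is bounded on $\cX$, both $a^{*}$ and $b^{*}$ grow at most linearly in $|S_\zs{u}|$. Raising to $\gamma\check{\delta} p$ and using $\cE(f)^\alpha=\cE(\alpha f)\exp(\alpha(\alpha-1)\int f^2/2)$, I would split $(X^{*}_\zs{\tau})^{\gamma\check{\delta} p}$ into a true stochastic exponential of $\gamma\check{\delta} p\, b^{*}$ (expectation $\le 1$ once a Novikov-type bound is in hand) times $\exp(C\int_\zs{t}^\tau(1+S_\zs{u}^2)\d u)\le \exp(CT(1+\sup_\zs{u\le T}S^2_\zs{u}))$, then apply Cauchy--Schwarz once more to extract another exponential moment of $\sup_\zs{u\le T}S_\zs{u}^2$.

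Choosing $\check{\delta}>1$ and $q>1$ sufficiently close to $1$ (so $p$ large), all the smallness constraints on the Hölder exponents reduce to bounds on $T$ that are enforced by $T<T_\zs{0}$: the three terms in \cref{eq: T0} match the three independent thresholds arising for the $S$-moment, the wealth/Novikov moment (whose constant involves $\kappa_\zs{2}$, reflecting the linear-in-$S$ growth of $b^{*}$), and the auxiliary $\B_\zs{1}$-type bound already used to put $\cL_\zs{h}$ in $\cX$. The main obstacle is the joint bookkeeping of these thresholds: one must verify that, for every $T<T_\zs{0}$, the window of admissible exponents $(\check{\delta},p,q)$ with $\check{\delta}>1$ is non-empty, which is precisely what the specific form of $T_\zs{0}$ guarantees.
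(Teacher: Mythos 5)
Your overall strategy (bounding $z^{\check{\delta}}$ by $x^{\gamma\check{\delta}}\exp\{\check{\delta}g(0)s^{2}/2\}$ up to a constant, splitting the $X^{*}$-factor from the $S$-factor by H\"older, and reducing everything to exponential moments of the OU fluctuation) is the same as the paper's, and your treatment of the $S$-factor via the martingale $N_u$ is an acceptable variant of the paper's use of the stopped-moment bound $\E\,\xi_{t,\tau}^{2m}\le m!(2\sigma^{2}T)^{m}$ from \cref{Props: Kab-Perg}. The genuine gap is in the exponent bookkeeping, and it is not a formality: you propose to take $q>1$ close to $1$, hence $p=q/(q-1)$ large, and to handle $(X^{*}_{\tau})^{\gamma\check{\delta}p}$ through the identity $\cE(b^{*})^{\alpha}=\cE(\alpha b^{*})\exp\bigl(\tfrac{\alpha(\alpha-1)}{2}\int (b^{*})^{2}\bigr)$ with $\alpha=\gamma\check{\delta}p$. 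Since $b^{*}=\sigma\check{\beta}(S_t,t)$ grows linearly in $S$, the correction factor is $\exp\bigl(c\,\alpha^{2}\int_{t}^{\tau}S_{u}^{2}\,\d u\bigr)$, and a Gaussian-type exponential moment of $\int S_{u}^{2}\,\d u$ is finite only when the constant in the exponent is below a threshold of order $\kappa/T$. For any fixed $T>0$ this fails once $\alpha$ is large, so letting $p\to\infty$ empties the ``window of admissible exponents'' rather than opening it; the trade-off you chose is backwards. Your closing claim that the three terms of \cref{eq: T0} ``match'' the thresholds of your scheme is asserted, not verified, and is false for that choice: the constants in \cref{eq: T0} (in particular the factors $(3+\gamma)$ and $(1+\gamma)$) are calibrated to one specific set of exponents, not to an arbitrary H\"older splitting.

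The paper's proof avoids this entirely by never raising the stochastic exponential to a power exceeding one. It fixes $\check{\delta}=1+(1-\gamma)/(2\gamma)$, so that $\delta_{1}=\gamma\check{\delta}=(1+\gamma)/2<1$, then takes $p=(1+\delta_{1})/(2\delta_{1})$ and $q=(1+\delta_{1})/(1-\delta_{1})=(3+\gamma)/(1-\gamma)$ (so $q$ is \emph{large}, not close to $1$), which keeps $\delta_{2}=p\delta_{1}=(3+\gamma)/4<1$. A second H\"older step with exponents $1/\delta_{2}$ and $1/(1-\delta_{2})$ then isolates $\bigl(\E\,\cE_{t,\tau}(b^{*})\bigr)^{\delta_{2}}\le 1$ by the supermartingale property alone — no Novikov condition and no $\exp\bigl(\int(b^{*})^{2}\bigr)$ correction ever appear — and leaves only $\E\exp\bigl\{\tfrac{\delta_{2}}{1-\delta_{2}}\int_{t}^{T}|a^{*}(u)|\,\d u\bigr\}$ with $|a^{*}|\le\kappa_{2}S^{2}+c$, plus $\E\exp\{q\check{\delta}_{1}\xi_{t,\tau}^{2}\}$. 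These give exactly the two constraints $T<\kappa(1-\gamma)/(2(3+\gamma)\kappa_{2})$ and $T<\gamma(1-\gamma)/((3+\gamma)(1+\gamma)\sigma^{2}g(0))$ appearing in \cref{eq: T0}. To repair your argument you would need to reverse your limit: choose $\check{\delta}$ close to $1$ and $p$ small enough that $\gamma\check{\delta}p<1$, accept a large but fixed $q$, and then verify the two resulting smallness conditions against \cref{eq: T0} explicitly rather than by assertion.
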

\noindent
\cref{sec: verif 1} yields condition $\H_\zs{5}$),
where
$
 z(\varsigma, t) = \varpi x^\gamma \exp \{ s^2 g(t) /2+ Y(s,t) \}.
$
\\
Now, the Verification Theorem \ref{VTh: A1} implies  \cref{sec: optimal 3}. \fdem
\subsection{Proof of \cref{sec: optimal 2}}
We set 
$
 \Delta_\zs{n}(y, t) =  h(y, t)- h_\zs{n}(y, t) .
$ 
So, 
\begin{align*}
\Upsilon^*_\zs{h,h_\zs{n}}(y, t) 
 = & \sup_{(y, t) \in \mathcal{K}} (| \Delta_\zs{n}(y, t)|+ |D_\zs{y}  \Delta_\zs{n}(y, t)|) 
				\le 
				 \me^{\varkappa T} \rho(h_, h_\zs{n}) 
				 \\
				&  \le 
				(\B_\zs{0}+\B_\zs{1}) \frac{\lambda^n}{1- \lambda} \me^{\varkappa T}
				=
				(\B_\zs{0}+\B_\zs{1}) \exp\{ H (\lambda) \},
\end{align*}
where $ \B_\zs{0}$ and $\B_\zs{1}$ are defined in \cref{eq:B0.B1} and 
$H(\lambda)= \tilde{\B}_\zs{2} T/ \lambda^2 + n \ln n - \ln (1- \lambda)$.
%
\\
If we take $\lambda= 1/\sqrt{n}$ and $\varkappa= n(\tilde{\B}_\zs{2})^2$ 
then we obtain
$$
\Upsilon^*_\zs{h,h_\zs{n}}(y, t)= O(n^{-\delta n}) ,
$$
for any  $0<\delta <1/2$. Hence \cref{sec: optimal 2}.\fdem
\bigskip

\renewcommand{\theequation}{A.\arabic{equation}}
\renewcommand{\thetheorem}{A.\arabic{theorem}}
\renewcommand{\thesubsection}{A.\arabic{subsection}}

\section{Appendix}\label{sec:A}
\setcounter{equation}{0}
\setcounter{theorem}{0}

\bigskip

\subsection{Verification theorem} \label{VTh: A1}

Now we give the verification theorem from \cite{BerdjanePergamenchtchikov2013}.
Consider on the interval $ [0,T] $. The stochastic control process given by the $ N$-dimensional It\^o process
\begin{align}\label{sec:Model.00}
\begin{cases}
d \varsigma^{\upsilon}_\zs{t} & =  a(\varsigma^{\upsilon}_\zs{t}, t,  \upsilon) dt+ b(t, \varsigma^{\upsilon}_\zs{t}, \upsilon) dW_\zs{t} ,
 \quad 
 t\geq 0 , 
\\
\varsigma^{\upsilon}_0 & =  x\in \mathbb{R}^N , 
\\
\end{cases}
\end{align}
where $ (W)_{0 \leq  t  \leq  T} $  is a standard $ k-$ dimensional Brownian motion. We assume that the control process $ \upsilon$ takes values in some set $ \Theta $. Moreover, we assume that the coefficients $ a$ and 
$ b$ satisfy the following conditions:
\begin{itemize}
\item[$V_\zs{1}$)] {\sl for all $ t \in [0,T] $  the functions $ a(., t, .)$  and $ b(., t, .,) $ are continuous on $ \mathbb{R}^N \times \Theta ;$} where $ \Theta \in \mathbb{R} \times \mathbb{R}_{+} $. 
\item[$V_\zs{2}$)] {\sl for every deterministic vector $ \upsilon \in \Theta $  the stochastic differential equation 
$$
d \varsigma^{\upsilon}_\zs{t}  =  a(\varsigma^{\upsilon}_\zs{t}, t,  \upsilon) dt+ b(\varsigma^{\upsilon}_\zs{t}, t, \upsilon) dW_\zs{t}
$$
has a unique strong solution.}
\end{itemize}

Now we introduce admissible control process for the equation (\ref{sec:Model.00}).
\\
We set 
$$
\mathcal{F}_\zs{t} =  \sigma \{ W_\zs{u}, 0  \leq  u  \leq  t\} ,
\quad
\mbox{for any}
\quad
0 <t  \leq  T ,
$$
where a stochastic control process 
$ \upsilon = (\upsilon_\zs{t})_{t\geq0} = (\alpha_\zs{t}, c_\zs{t})_{t \geq 0}$ 
 is called admissible on $ [0,T] $  
 with respect to equation (\ref{sec:Model.00})
  if it is $ (\mathcal{F}_\zs{t})_{0  \leq  t  \leq  T} $  progressively measurable
   with values in $ \Theta$, 
   and equation (\ref{sec:Model.00}) has a unique strong a.s. continuous solution
    $ (\varsigma_\zs{t}^{\upsilon})_{0  \leq  t  \leq  T} $  such that 
$$
\int_0^T (|a(\varsigma_\zs{t}^\upsilon, t, \upsilon_\zs{t})|+|b(\varsigma_\zs{t}^\upsilon, t, \upsilon_\zs{t})|^2 )dt < \infty \quad \mbox{a.s.} .
$$
We denote by $ \mathcal{V} $  the set of all admissible control processes with respect to the equation (\ref{sec:Model.00}).\\
Moreover, let $ \dsl\bold{f}:  \mathbb{R}^m \times  [0,T]  \times \Theta\rightarrow [0,\infty) $ and $ \bold{h}: \mathbb{R}^m \rightarrow [0, \infty) $  be continuous utility functions. We define the cost function by
$$
\pmb{J}(x, t, \upsilon) =  \E_{x, t} \Bigg( \int^T_\zs{t} \bold{f} (\varsigma, u, \upsilon_\zs{u}) \d u + \bold{h}(\varsigma^\upsilon_\zs{T} ) \Bigg), \quad 0 \leq  t  \leq  T,
$$
where $ \E_{x, t} $ is the expectation operator conditional on $ \varsigma^{\upsilon}_\zs{t} = x$. Our goal is to solve the optimization problem
\begin{align}\label{sec:Model.01}
\pmb{J}^*(x, t) : =  \sup_{\upsilon \in \mathcal{V}} \pmb{J}(x, t, \upsilon) .
\end{align}
To this end we introduce the Hamilton function, i.e. for any $\varsigma$ and $ 0  \leq  t  \leq  T$,  with $ \bold{q} \in \mathbb{R}^N $ and symmetric $ N \times N $  matrix $ \bold{M} $  we set
$$
H(\varsigma, t, q, M): =  \sup_{\theta \in \Theta} H_0(\varsigma, t, \bold{q}, \bold{M}, \theta ) ,
$$
where 
$$
 H_0(\varsigma, t, \bold{q}, \bold{M}, \theta ) : =  a'(\varsigma, t, \theta) \bold{q} + \frac{1}{2} tr [bb'(\varsigma, t, \theta) \bold{M}] + \bold{f}(\varsigma, t, \theta) .
 $$
In order to find the solution to (\ref{sec:Model.01}) we investigate the HJB equation 
\begin{align}
\begin{cases}
z_\zs{t}(\varsigma, t) + H(\varsigma, t, z_{\varsigma}(\varsigma, t), z_{\varsigma \varsigma} (\varsigma, t)) = 0, \quad t \in [0,T], \\
z(\varsigma, T) =  \bold{h}(\varsigma), \quad \varsigma \in \mathbb{R}^N .
\end{cases}
\end{align}
Here $ z_\zs{t} $  denote the partial derivatives of $ z $  with respect to $ t,  z_{\varsigma}(\varsigma, t) $ the gradient vector with respect to $ \varsigma $  in $ \mathbb{R}^N $  and $ z_{\varsigma \varsigma} (x, t) $  denotes the symmetric hessian matrix, that is the matrix of the second order partial derivatives with respect to $ \varsigma $.
\\
We assume the following conditions hold:
\begin{itemize}
\item[$\H_\zs{1}$)] {\sl The functions $ \bold{f} $  and $ \bold{h} $ are non negative.}
\item[$\H_\zs{2}$)] {\sl There exists a function $ z(\varsigma \; \mbox{from} \; \C^\zs{2,1} ( \mathbb{R}^N \times [0,T] ) , t) $  from $  \bbr^N \times [0,T]  \rightarrow (0, \infty) $  which satisfies the HJB equation .}
\item[$\H_\zs{3}$)] {\sl There exists a measurable function $ \theta^*: \mathbb{R}^N \times [0,T]  \rightarrow \Theta $   such that for all  $ \varsigma \in \mathbb{R}^N $ and $ 0  \leq  t  \leq  T $
$$
H(\varsigma, t, z_{\varsigma}(\varsigma, t), z_{\varsigma \varsigma} (\varsigma, t)) =  H_0 (\varsigma, t, z_{\varsigma}(\varsigma, t), z_{\varsigma \varsigma} (\varsigma, t), \theta^*(\varsigma^0, t)) .
$$}
\item[$\H_\zs{4}$)] {\sl There exists a unique strong solution to the It\^o equation
$$
d \varsigma^*_\zs{t}  =  a(\varsigma^*_\zs{t}, t) dt+ b( \varsigma^*, t) dW_\zs{t}, \quad \varsigma^*_0 = x, \quad t \geq 0  ,
$$
where $ a(., t) = a(., t,  \theta^*(., t)) $ and  $ b(., t) = b(., t, \theta^*(., t)) $. Moreover, the optimal control process $ \upsilon^*_\zs{t} =  \theta^*( \upsilon^*_\zs{t}, t) $ for $ 0  \leq  t  \leq  T $  belongs to $ \mathcal{V} $.}
\item[$\H_\zs{5}$)] {\sl 
$$
\E \left( \int_\zs{0}^T f(u) \d u + h(\varsigma^*_\zs{T})  \right) < + \infty \,.
$$ 
 }
\end{itemize}

\begin{theorem}
Assume that conditions $ \H_\zs{1}$)- $ \H_\zs{5}$) holds
$$
\Rightarrow \upsilon_\zs{t}^* = (\upsilon_\zs{t}^*)_{0  \leq  t  \leq  T}
$$
is a solution for this problem.
\end{theorem}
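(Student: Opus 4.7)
The plan is the classical verification argument via It\^o's formula applied to $z(\varsigma^{\upsilon}_\zs{t},t)$, combined with localization and the sign conditions built into $\H_\zs{1})$--$\H_\zs{5})$. First I would fix an initial condition $(\varsigma,t)$, an arbitrary admissible control $\upsilon\in\cV$, and a localizing sequence of stopping times
$$
\tau_\zs{n}=\inf\{u\ge t:\,|\varsigma^{\upsilon}_\zs{u}|\ge n\}\wedge T,
$$
so that the coefficients $a,b$ and the derivatives of $z$ are bounded on $[t,\tau_\zs{n}]$, and the stochastic integral below is a true martingale.

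Applying It\^o's formula to $z(\varsigma^{\upsilon}_\zs{u},u)$ on $[t,\tau_\zs{n}]$ and using $\H_\zs{2})$ together with the identity
$$
z_\zs{t}(\varsigma,u)+a'(\varsigma,u,\upsilon)z_\zs{\varsigma}(\varsigma,u)+\tfrac{1}{2}\tr[bb'(\varsigma,u,\upsilon)z_\zs{\varsigma\varsigma}(\varsigma,u)]+\f(\varsigma,u,\upsilon)=z_\zs{t}+H_\zs{0}(\varsigma,u,\partial z,\partial^{2}z,\upsilon)-\f(\varsigma,u,\upsilon)+\f(\varsigma,u,\upsilon),
$$
and the HJB inequality $z_\zs{t}+H_\zs{0}(\cdot,\upsilon)\le z_\zs{t}+H=0$, I would rearrange to obtain
$$
z(\varsigma,t)\ge \E_\zs{\varsigma,t}\Bigl[z(\varsigma^{\upsilon}_\zs{\tau_\zs{n}},\tau_\zs{n})+\int^{\tau_\zs{n}}_\zs{t}\f(\varsigma^{\upsilon}_\zs{u},u,\upsilon_\zs{u})\,\d u\Bigr],
$$
the stochastic integral vanishing in expectation by the localization. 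For the optimal control $\upsilon^{*}$ produced by $\H_\zs{3})$ and $\H_\zs{4})$, $H_\zs{0}(\cdot,\theta^{*})=H$, so the inequality above becomes an equality.

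Next I would pass to the limit $n\to\infty$. On the one hand, by $\H_\zs{1})$ the running cost is nonnegative, so monotone convergence gives
$$
\E_\zs{\varsigma,t}\int^{\tau_\zs{n}}_\zs{t}\f(\varsigma^{\upsilon}_\zs{u},u,\upsilon_\zs{u})\,\d u\;\longrightarrow\;\E_\zs{\varsigma,t}\int^{T}_\zs{t}\f(\varsigma^{\upsilon}_\zs{u},u,\upsilon_\zs{u})\,\d u.
$$
For the boundary term I would split $z(\varsigma^{\upsilon}_\zs{\tau_\zs{n}},\tau_\zs{n})=\Chi_\zs{\{\tau_\zs{n}=T\}}\h(\varsigma^{\upsilon}_\zs{T})+\Chi_\zs{\{\tau_\zs{n}<T\}}z(\varsigma^{\upsilon}_\zs{\tau_\zs{n}},\tau_\zs{n})$. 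Since $z\ge 0$ by $\H_\zs{2})$, Fatou's lemma applied to the right-hand side yields
$$
z(\varsigma,t)\ge \E_\zs{\varsigma,t}\h(\varsigma^{\upsilon}_\zs{T})+\E_\zs{\varsigma,t}\int^{T}_\zs{t}\f(\varsigma^{\upsilon}_\zs{u},u,\upsilon_\zs{u})\,\d u=\pmb{J}(\varsigma,t,\upsilon),
$$
so $z(\varsigma,t)\ge\pmb{J}^{*}(\varsigma,t)$. For $\upsilon^{*}$ the same computation is an equality for every $n$, and the integrability condition $\H_\zs{5})$ together with dominated/monotone convergence (using $\E\h(\varsigma^{*}_\zs{T})<\infty$ and the fact that $z(\varsigma^{*}_\zs{\tau_\zs{n}},\tau_\zs{n})\to \h(\varsigma^{*}_\zs{T})$ a.s.\ on the event $\{\tau_\zs{n}=T\}$, which has probability tending to $1$) allows one to take the limit and conclude $z(\varsigma,t)=\pmb{J}(\varsigma,t,\upsilon^{*})$. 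Combining the two inequalities gives optimality of $\upsilon^{*}$.

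The delicate step, and the main obstacle, is the passage to the limit in the boundary term $\E_\zs{\varsigma,t}\,z(\varsigma^{\upsilon}_\zs{\tau_\zs{n}},\tau_\zs{n})$: for a general admissible $\upsilon$ the function $z$ need not be uniformly integrable along the process, so one really needs the sign assumptions of $\H_\zs{1})$--$\H_\zs{2})$ (to use Fatou and obtain only an inequality) and the stronger integrability from $\H_\zs{5})$ tailored to $\upsilon^{*}$ (to upgrade it to an equality). In our application this is precisely why \cref{sec: verif 1} is needed: it provides a uniform moment bound on $z(\varsigma^{*}_\zs{\tau},\tau)$ of order $\check\delta>1$ over all stopping times $\tau\in\cM_\zs{t}$, hence uniform integrability of the family $\{z(\varsigma^{*}_\zs{\tau_\zs{n}},\tau_\zs{n})\}_\zs{n}$, which closes the verification argument.
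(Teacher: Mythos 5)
The paper itself gives no proof of this theorem: it is imported verbatim from \cite{BerdjanePergamenchtchikov2013} and used as a black box, so there is no in-paper argument to compare yours against. That said, your proposal is the standard verification proof and is sound in outline: localize, apply It\^o's formula to $z(\varsigma^{\upsilon}_\zs{u},u)$, use $z_\zs{t}+H_\zs{0}(\cdot,\upsilon)\le z_\zs{t}+H=0$ from $\H_\zs{2}$) to get the supersolution inequality for arbitrary admissible $\upsilon$, use $\H_\zs{3}$)--$\H_\zs{4}$) to turn it into an equality along $\upsilon^{*}$, and pass to the limit with monotone convergence and Fatou on the nonnegative terms. Your diagnosis of the delicate point is also the right one: condition $\H_\zs{5}$) \emph{as literally stated} (finiteness of the cost at $\upsilon^{*}$) does not by itself give convergence of $\E_\zs{\varsigma,t}\,z(\varsigma^{*}_\zs{\tau_\zs{n}},\tau_\zs{n})$ to $\E_\zs{\varsigma,t}\,\h(\varsigma^{*}_\zs{T})$; one needs uniform integrability of the boundary terms, which is exactly what the $\check\delta$-moment bound of \cref{sec: verif 1}, uniform over stopping times, supplies in this paper's application (and what the corresponding hypothesis in \cite{BerdjanePergamenchtchikov2013} is designed to encode). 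So your argument is correct provided $\H_\zs{5}$) is read as that uniform-integrability condition rather than mere finiteness of $\pmb{J}(x,t,\upsilon^{*})$; with the weaker literal reading there would be a genuine gap at the final limit for $\upsilon^{*}$, which you have correctly identified and closed via \cref{sec: verif 1}.
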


\bigskip
\subsection{Cauchy Problem}\label{Apdx:Chyplm}
Suppose $ u(x,t) $ is the classical solution of the following a nonlinear problem 
\begin{align}\label{sec:Main equation}
\begin{cases}
\cL u \equiv u_\zs{t}  -  \sum_\zs{1  \leq  i, j  \leq  n} a_\zs{ij}(x,t, u, u_x)u_\zs{x_\zs{i} x_\zs{j} }+ a(x, t, u, u_x) = 0 \\[3mm]
u|_\zs{t = 0} = u(x, 0) = \psi_0(x).
\end{cases}
\end{align}
We assume that there exist some functions $(a_\zs{1}, a_\zs{2}, ..., a_\zs{n}) ,$ such that
\begin{align}\label{sec:CaPr.00}
a_\zs{ij}(x, t, u, p)  &\equiv \frac{\partial a_\zs{i}(x, t,  u, p)}{\partial p_\zs{j}} 
\quad
\mbox{and}
\\
A(x, t, u, p)  &\equiv a(x, t, u, p)- \sum^{n}_\zs{i = 1} \frac{\partial a_\zs{i}}{\partial u} p_i- \sum^{n}_\zs{i = 1} \frac{\partial a_\zs{i}}{\partial x_\zs{i}} \; .
\end{align}
Now for any  $ N \geq 1 $,
$$
\Gamma_\zs{N} =  \{ (x, t):  \;  |x|  \leq  N, \; 0  \leq  t  \leq  T \} 
$$
We introduce the following conditions for ensuring the existence of the solution $ u(x, t)  $ of Cauchy problem.

Suppose that the following conditions hold.

\begin{itemize}
\item[$\C_\zs{1}$)] {\sl For all $ N \geq 1$,  
\begin{align*}
\psi_0(x) \in \cH^{2+\beta}(\Gamma_\zs{N})
\quad
\mbox{and}
\quad 
\max_\zs{E_\zs{n}} \mid \psi_\zs{0}(x) \mid <\infty .
\end{align*} }
\item[$\C_\zs{2}$)] { There exist $ h \geq 0 $ and some $ \bbr_\zs{+} \rightarrow \bbr_\zs{+}$ function $ \Phi $,
such that for all $ \dsl{x \in \bbr^n}$, $u \in \bbr$ and for all $ 0 \le t \le T$
\begin{equation}\label{sec:CaPr.1}
 A(x, t, u, 0) u \geq - \Phi (|u|) |u| - b ,
\end{equation} 
and
$$
\int_\zs{0}^\infty \frac{d \tau}{ \Phi (\tau)} =  \infty  .
$$

}
\item[$\C_\zs{3}$)] {\sl For $ t \in (0,T]$ and arbitrary $ x,\;  u,\;  p \in \bbr^n $, and any $ \xi =  (\xi_\zs{1}, \xi_\zs{2}, ..., \xi_\zs{n}) \in \bbr^{n}$, there exists $ 0 < \nu < \mu$  such that
\begin{align*}
\sum_\zs{1  \leq  i, j  \leq  n} a_\zs{ij} (x, t,  u, p)\xi_\zs{i} \xi_\zs{j} \geq 0 
\quad
\mbox{and}
\quad 
 \nu |\xi|^2  \leq  a_\zs{ij} (x, t,  u, p) \xi_\zs{i} \xi_\zs{j}  \leq  \mu |\xi|^2  .
\end{align*}
}
 \item[$\C_\zs{4}$)] 
{\sl The functions $ a_\zs{i} (x, t, u, p) $ and $ a(x, t, u, p)$ are continuous, 
and  the functions $ \dsl{(a_\zs{i})_\zs{1  \leq  i  \leq  n} }$ are differentiable with respect to $ x, u$ and $ p \in \bbr^n$ ,
and for any $ N \geq 1\;$  there exists $ \mu_\zs{1}  =  \mu_\zs{1} (N)$ such that
 $$
\sup_\zs{\substack{(x, t)  \leq  \Gamma_\zs{N} \\[1.5mm] |u|  \leq  N \\[1.5mm] p \in \bbr^n}} \frac{ \sum^{n}_\zs{i = 1} \big( |a_i| + \big| \frac{\partial a_i}{\partial u} \big| \big) (1+|p|) + \sum^n_{i, j = 1} |\frac{\partial a_i}{\partial x_\zs{j}}| +|a| }{(1+ |p|)^2} \leq  \mu_\zs{1}(N) . 
 $$
} 
 \item[$\C_\zs{5}$)] 
 {\sl  For all $ N \geq 1,$ 
 and for all $ |x|  \leq  N$,
  $ 0  \leq  t  \leq  T$,
   $ |u|  \leq  N$ 
   and $ |p|  \leq  N $,
    the functions $ a_i , \; a , \;  \partial a_i / \partial p_j, \; \partial a_i / \partial u$,
     and $ \partial a_i / \partial x_i$ are continuous functions satisfying a H$\ddot{o}$lder condition in $ x,\;  t,\;  u$ and $ p$ with exponents $ \beta,\;  \beta / 2,\;  \beta$ and $ \beta$ respectively for some $ \beta > 0$.
     
     }    
\end{itemize}

\begin{theorem}[ See Theorem 8.1, p. 495 of \cite{LadyzenskajasolonnikovUralceva1967}
]\label{CaPrt00 theorem}
Assume that the conditions $ \C_\zs{1}$)--$ \C_\zs{5}$) hold. Then there exists at least one solution $ u(x,t)$ of Cauchy problem (\ref{sec:Main equation}) that is bounded in $ R^N \times [0, T]$ which belongs to $ \cH^{2+\beta, 1+\beta/2}(
\Gamma_\zs{N})$ for any $ N \geq 1$.
\end{theorem}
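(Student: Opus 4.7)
The plan is to establish existence through a standard three-stage program: localize the Cauchy problem to bounded cylinders, obtain a hierarchy of a priori estimates that survive in the limit, and then pass to the whole space through a diagonal/compactness argument. First, for each $N\ge 1$, I would consider the auxiliary initial-boundary value problem $\cL u^N = 0$ on $\Gamma_\zs{N}$ with $u^N|_\zs{t=0}=\psi_\zs{0}$ and a suitable smooth boundary condition on $|x|=N$ (e.g.\ a cutoff of $\psi_\zs{0}$). Condition $C_\zs{3}$ guarantees uniform parabolicity and condition $C_\zs{5}$ gives the H\"older regularity of the coefficients needed to invoke classical Schauder theory on bounded cylinders.

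The heart of the argument is the chain of a priori bounds. The $L^\infty$ estimate for $u^N$ comes from the coercivity condition \cref{sec:CaPr.1}: one compares $u^N$ with the solution $\Phi$--ODE $\dot{M}=\Phi(M)+b$, whose divergence-integrability hypothesis $\int_\zs{0}^{\infty} \d\tau/\Phi(\tau)=\infty$ guarantees that $M(t)$ stays finite on $[0,T]$; a maximum-principle comparison then gives $\|u^N\|_\zs{\infty}\le M(T)$, a bound independent of $N$. Next, the quadratic growth structure encoded in $C_\zs{4}$ (terms bounded by $\mu_\zs{1}(N_\zs{0})(1+|p|)^2$ once $|u|$ is already controlled) is exactly what a Bernstein-type device needs: differentiating the equation, multiplying by appropriate test functions, and using the uniform ellipticity from $C_\zs{3}$ yields an interior gradient bound $\sup|u^N_\zs{x}|\le C(N_\zs{0})$ on any fixed $\Gamma_\zs{N_\zs{0}}$ with $N_\zs{0}<N$. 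With $u^N$ and $\nabla u^N$ bounded, the equation becomes quasilinear with coefficients in a bounded set, so Krylov--Safonov / Nash--De Giorgi machinery delivers interior H\"older estimates for $\nabla u^N$, and then the Schauder estimates of $C_\zs{5}$ (H\"older continuity of $a_\zs{i}, a$ and their derivatives in $(x,t,u,p)$) upgrade these to interior $\cH^{2+\beta,1+\beta/2}$ bounds.

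With a solution $u^N$ in hand on each $\Gamma_\zs{N}$ (obtained via the Leray--Schauder fixed-point theorem applied to the linearization $v\mapsto u$ where $u$ solves $u_\zs{t}-\sum a_\zs{ij}(x,t,v,v_\zs{x})u_\zs{x_\zs{i}x_\zs{j}}+a(x,t,v,v_\zs{x})=0$, using the a priori estimates above to bound the fixed-point set), I would then extract the global solution. For each fixed $N_\zs{0}$, the sequence $\{u^N\}_\zs{N>N_\zs{0}}$ is uniformly bounded in $\cH^{2+\beta,1+\beta/2}(\Gamma_\zs{N_\zs{0}})$, hence precompact in $C^{2,1}(\Gamma_\zs{N_\zs{0}})$ by Arzel\`a--Ascoli. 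A diagonal extraction along $N_\zs{0}\to\infty$ produces a limit $u$ that satisfies \cref{sec:Main equation} classically on each $\Gamma_\zs{N_\zs{0}}$ and inherits the H\"older-class membership. Boundedness of $u$ on $\bbr^N\times[0,T]$ is preserved from the $N$-uniform $L^\infty$ bound.

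The main obstacle I anticipate is the gradient estimate: condition $C_\zs{4}$ permits genuine quadratic growth in $p$, which is the borderline case for Bernstein's method and forces careful handling of the auxiliary function $w=|\nabla u|^2+\lambda u^2$ with a sharp choice of $\lambda$ depending on the already-established $L^\infty$ bound. The integrability hypothesis $\int^{\infty}\d\tau/\Phi(\tau)=\infty$ in $C_\zs{2}$ is what rules out blow-up at this stage; without it one could only obtain a local-in-time solution. Once the gradient is controlled, the rest is routine linear parabolic regularity.
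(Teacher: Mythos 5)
This theorem is not proved in the paper at all: it is quoted verbatim from Theorem 8.1, p.~495 of \cite{LadyzenskajasolonnikovUralceva1967}, so there is no in-paper argument to compare yours against. Your outline --- $L^\infty$ bound by comparison with the ODE $\dot{M}=\Phi(M)+b$ under the hypothesis $\int^{\infty}\d\tau/\Phi(\tau)=\infty$, a Bernstein-type gradient bound exploiting the quadratic growth allowed by $C_\zs{4}$, interior H\"older and then Schauder estimates from $C_\zs{3}$ and $C_\zs{5}$, Leray--Schauder solvability on bounded cylinders, and a diagonal extraction to pass to $\bbr^n\times[0,T]$ --- is precisely the architecture of the proof in that monograph, so as a strategy it is correct and faithful to the cited source. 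Be aware, however, that it remains a roadmap rather than a proof: the two genuinely hard steps, namely the gradient estimate in the borderline case of quadratic growth in $p$ and the H\"older estimate for $u_\zs{x}$ (which in the original predates Krylov--Safonov and is done by a De Giorgi-type iteration), are named but not executed, and they constitute the bulk of the work in the cited reference.
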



\subsection{Proof of \cref{Pr.sec:FKm.0}}
One can check directly that the set $\cX$ is closed in the 
$\dsl{\C^{1,0}\left(\bbr\times [0,T]\right)}$ which is complete. So, the space $(\cX,\rho)$
is complete also. Hence \cref{Pr.sec:FKm.0}.\fdem

\subsection{Proof of \cref{Pr.sec:FKm.2}}
Firstly, note that from the definition of the mapping in  \cref{sec:FKm.3} we get for any $\delta >0$
$$
\frac{\cL_\zs{h}(s +  \delta, t)   -   \cL_\zs{h}(s,t)}{ \delta}  =  \int^T_\zs{t} \int_\zs{\bbr}  \Bigg( \Psi_\zs{h} (z,  u)   \bigg( \frac{\varphi (s+  \delta, t, z, u)   -   \varphi(s, t, z, u)}{ \delta} \bigg)  \d  z \Bigg) 
 \d  u .
$$
Taking into account that the function $ \rho$ is continuously differentiable, we can rewrite 
\begin{align*}
\frac{\varphi (s+  \delta, t, z, u)   -   \varphi(s, t, z, u)}{ \delta} & =  \frac{1}{ \delta} \int^{s+  \delta}_\zs{s} \varrho(\nu, t, z, u)  \d  \nu 
 =  \varrho(s, t, z, u) + \pmb{D}_\zs{ \delta} (s, t, z, u)  ,
\end{align*}
where 
$ \dsl\varrho(s, t, z, u) =  \partial  \varphi(s, t, z, u) / \partial s $  and
$$
\pmb{D}_\zs{ \delta} (s, t, z, u)  =  \frac{1}{ \delta}  \int^{s +  \delta}_\zs{s} \Big( \varrho (\nu, t, z, u)   -   \varrho (s, t, z, u)  \Big)  \d  \nu .
$$
So, 
$$
\frac{\cL_\zs{h}(s +  \delta, t)   -   \cL_\zs{h}(s,t)}{ \delta} 
 = 
 \int^T_\zs{t} \Bigg(  \int_\zs{\bbr}   \Psi_\zs{h} (z,  u)    \varrho (s, t, z, u)   \d  z\Bigg)   
 \d  u
+ \pmb{G}_\zs{ \delta} ,
$$
where
$
 \pmb{G}_\zs{ \delta}  =  \int^T_\zs{t} \Big(  \int_\zs{\bbr}   \Psi_\zs{h} (z,  u)    \pmb{D}_\zs{ \delta} (s, t, z, u)   \d  z \Big)   
 \d  u .
$
Now we have to prove that the term $\pmb{G}_\zs{ \delta}$ goes to zero as $\delta \rightarrow 0 $.
\\
As $ \Psi_\zs{h}(s,t)$ is a bounded function as for $h \in \cX $, therefore,
$$
|\pmb{G}_\zs{ \delta} |  
\leq  
\Psi^* \int^T_\zs{t} \frac{1}{ \delta}  \Bigg(  \int^{s +  \delta}_\zs{s}   \pmb{L}(\nu,  u)  \d  \nu   \Bigg)  \d u 
\le \Psi^*  \int^T_\zs{t} \pmb{L}^*_\zs{\delta}( u)  \d  u
$$
where 
$ \Psi^*=\sup_{\substack{z \in \bbr , \; 0\le t \le T}} |\Psi_\zs{h}(z,u)| , \;$ 
 $ \pmb{L}^*_\zs{\delta}( u)= \max_\zs{s \le \nu \le s+ \delta} \pmb{L}(\nu, u)$ 
 and
$$
\pmb{L}(\nu, u)  =   \int_\zs{\bbr}  | \varrho (\nu, t, z, u)   -   \varrho (s, t, z, u)  |  \d  z .
$$
We can check directly that for some $\c^{*}>0$
$$
\sup_{\substack{0 < \delta <1}} \pmb{L}^*_\zs{\delta}(u)   \leq  \frac{\c^{*}}{\sqrt{u -  t}} .
$$
Moreover, note that for some $N>1$
\begin{align*}
\pmb{L}(\nu, u)   \leq  \int\limits_\zs{|z|  \leq  N}  | \varrho (\nu, t, z, u)   -   \varrho (s, t, z, u)  |  \d  z
+
\int\limits_\zs{|z| > N}  | \varrho (\nu, t, z, u)   -   \varrho (s, t, z, u)  |  \d  z.
\end{align*}
 \\
The first part approach zero when $ N \rightarrow 0$.
\begin{align*}
\int\limits_\zs{|z| > N}   | \varrho (\nu, t, z, u)   |  \d  z  
 &=  
\frac{\mu(u,t)}{\sqrt{2  \pi}\sigma_\zs{1}(u,t)} \int\limits_\zs{|\sigma_\zs{1} y + \nu \mu| >N} |y| \me^{- \frac{y^2}{2}} \d y
\\[2mm]
\; &\leq \; 
\frac{\mu(u,t)}{\sqrt{2  \pi} \sigma_\zs{1}(u,t)}    
\int\limits_{|y|> N_\zs{1}}  |y|  \me^{ -  \frac{y^2}{2}}   \d  y \rightarrow 0 
\quad 
\mbox{as}
\quad
N \rightarrow \infty .
\end{align*}
\\
where $ N_\zs{1}= \big( N- (|s|+ \delta) |\mu|\big)/ \sigma_\zs{1}$, and $ s, \mu, \sigma_\zs{1}$ are fixed.
\\
Thus, for any  $ s, t$,
and $ u$, 
\begin{equation}\label{eec: FKm: lim}
\lim_\zs{\delta \rightarrow 0} \pmb{L}^*_\zs{\delta}( u)  =  0 .
\end{equation}
So, by the Lebesgue dominated theorem
$
 \int^T_\zs{t} \pmb{L}^*_\zs{\delta}( u) \d u \rightarrow 0.
$
\\
Hence \cref{Pr.sec:FKm.2}. \fdem

\subsection{Proof of \cref{sec: verif 1}}
From the optimal wealth process given in \cref{eq:dXt*} through It\^o formula we have that
$$
X^*_\zs{t}= x  \exp \bigg\{ \int^t_\zs{0} a^*(u) \d u \bigg\}  \cE_\zs{0, t}(b^*) ,
$$
where the function $ a^*$ and $ b^*$ are defined in \cref{eq:dXt*} and 
$$
 \cE_\zs{0, t}(b^*)= \exp \bigg\{ \int^t_\zs{0} b^*(u) \d W_\zs{u} - \frac{1}{2} \int^t_\zs{0} (b^*(u))^2 \d u \bigg\} .
 $$ 
 We will show \cref{sec: verif 1}  for $ \check{\delta} = 1+ (1- \gamma)/2 \gamma $,
 taking into account that 
 $$
  z(\varsigma, t) \le \c^{*} x^\gamma \exp\{ s^2 g(0)/2 \} \,.
  $$
  To this end it is sufficient to show that
 $$
 \sup_\zs{\tau \in \cM_\zs{t}} \E \Bigg( (X^*_\zs{\tau})^{\delta_\zs{1}}   \exp\bigg\{ \frac{\check{\delta}_\zs{1}}{2}  S_\zs{\tau}^2 \bigg\} 
  \bigg|  X_\zs{t}= x, S_\zs{t}= s \Bigg) < + \infty  ,
 $$
 where $\delta_\zs{1}= \gamma \check{\delta}=(1+\gamma)/2<1$ and $ \check{\delta}_\zs{1}= g(0) \check{\delta}$.
 Note that $ \big( \cE_\zs{t, u} \big)_\zs{u \ge t}$ is supermartingale and 
 $
 \E  \cE_\zs{t, \tau} (b^*) \le 1
 $ 
 for any stopping time $ \tau \in \cM$.
 Moreover, note that
 $$
 S_\zs{\tau}= \me^{- \varkappa (T-t)}  s+ \xi_\zs{t, \tau} 
 \quad
 \mbox{and}
 \quad
  \xi_\zs{t, \tau} =  \sigma  \me^{- \varkappa \tau} \int^\tau_\zs{t} \me^{ \varkappa u} \d W_\zs{u} .
 $$
Since
$|S_\zs{\tau}| \le |s|+ |\xi_\zs{t, \tau}|$,
  one needs to check that 
  $$
 \sup_\zs{\tau \in \cM_\zs{t}} \E \Big( (X^*_\zs{\tau})^{\delta_\zs{1}}   \exp\big\{  \check{\delta}_\zs{1} \xi_\zs{t, \tau}^2 \big\}  \Big) < + \infty .
 $$
 By  H\"older inequality we obtain that for $ p= (1+\delta_\zs{1})/2 \delta_\zs{1}$ and  $ q= (1+ \delta_\zs{1})/(1- \delta_\zs{1})$
 $$
 \E_\zs{\varsigma, t} (X^*_\zs{\tau})^{\delta_\zs{1}} \exp\big\{  \check{\delta}_\zs{1} \xi_\zs{t, \tau}^2 \big\}
					 \le 
 					\Big( \E_\zs{\varsigma, t} (X^*_\zs{\tau})^{ \delta_\zs{2}}   \Big)^{\frac{1}{p}}
  					\Big( \E_\zs{\varsigma, t} \exp\big\{  q \check{\delta}_\zs{1} \xi_\zs{t, \tau}^2 \big\}  \Big)^{\frac{1}{q}} ,
 $$
 where $\delta_\zs{2}= p \delta_\zs{1}=(1+\delta_\zs{1})/2<1$.
 Note that 
$$
 \E_\zs{\varsigma, t} (X^*_\zs{\tau})^{ \delta_\zs{2}} = x^{\delta_\zs{2}} \E \exp \bigg\{ \delta_\zs{2} \int^\tau_\zs{t} a^*(u) \d u \bigg\} \Big( \cE_\zs{t, \tau}(b^*) \Big)^{\delta_\zs{2}}.
 $$
 By H\"older inequality,  for  $ r = 1/ \delta_\zs{2}$
and~$q_\zs{1}= 1/(1- \delta_\zs{2})$  
\begin{align} 
\begin{split}
 \E_\zs{\varsigma, t} (X^*_\zs{\tau})^{ \delta_\zs{2}}
  		&\le
  x^{\delta_\zs{2}} 
 \bigg( 
 		\E \exp \bigg\{ 
 		\frac{\delta_\zs{2} }{1- \delta_\zs{2}}  \int^\tau_\zs{t}  a^*(u)  \d u
  					 \bigg\}
 \bigg)^{1- \delta_\zs{2}}
  \Big( \E_\zs{t, \tau}  \cE_\zs{t, \tau}( b^*) \Big)^{\delta_\zs{2}}
  		\\[2mm]
  		&\le
  x^{\delta_\zs{2}} 
 \bigg( 
 		\E \exp \bigg\{ 
 		\frac{\delta_\zs{2} }{1- \delta_\zs{2}}  \int^T_\zs{t}  |a^*(u)|  \d u
  					 \bigg\}
 \bigg)^{1- \delta_\zs{2}} .
 \end{split}
  \end{align}
 Moreover, note that
  \begin{align*}
  |a^{*}(t)|
  	  \le 
  \Big(g(0)+ \frac{\kappa_\zs{1}}{\sigma^2} \Big) \kappa_\zs{1} s^2 + \kappa_\zs{1} |s| \bold{B}_\zs{1}  + 1+ r 
  				 \le  				
  				 \kappa_\zs{2} s^2+ \c^{*} ,
\end{align*}
 where~$\c^{*} $ is some constant and $ \kappa_\zs{2}= \kappa^2_\zs{1}(1/\sigma^2 + 1/2) +  g(0) \kappa_\zs{1}$.
So, for some $\c^{*}>0$ 
$$
\int^T_t |a^*(u)| \d u 
		\le 
2 \kappa_\zs{2} \int^T_t \xi^2_\zs{t,u} \d u +\c^{*} .
$$
Let us show now that 
$$
\E \exp \bigg\{ \tilde{\kappa}_\zs{2} \int^T_t \xi^2_\zs{t, u} \d u  \bigg\} 
				< +\infty ,
$$
where
$\tilde{\kappa}_\zs{2}= 2 \delta_\zs{2} \kappa_\zs{2}/ (1- \delta_\zs{2}) 
				= \Big( 2(3+\gamma ) \kappa_\zs{1}^2 \big(1/\sigma^2+ 1/2+ g(0)/\kappa_\zs{1}\big) \Big)/(1-\gamma) $.
$$
\E \exp \bigg\{ \tilde{\kappa}_\zs{2} \Big( \int^T_t \xi^2_\zs{t, u} \d u \Big) \bigg\}
						=
						\sum^\infty_\zs{m=0} \frac{\tilde{\kappa}^m_\zs{2}}{m !} \E \Bigg( \int^T_t \xi^2_\zs{t, u} \d u \Bigg)^m <+\infty.
$$
Moreover, note  that in view of the H\"older inequality
$$
\E \Bigg( \int^T_t \xi^2_\zs{t, u} \d u \Bigg)^m 
							\le 
							(T-t)^{m-1}  \int^T_t \E  \xi^{2m}_\zs{t, u} \d u.
$$
Taking into account
that $\xi_\zs{t, u} \sim \cN(0, \int^u_\zs{t} \me^{ - 2 \varkappa (u-v)} \d v)$, we obtain that
$$
\E  \xi_\zs{t, u}^{2m } 
				=
				 (2m-1)!! \Big( \int^u_\zs{t} \me^{-2 \varkappa (u-v)} \d v \Big)^m 
				\le 
				\frac{m!}{\kappa^m}
$$
and
\begin{align*}
\E \bigg( \int^T_\zs{t} \xi_\zs{t, u}^2  \d u \bigg)^m
					 \le  
					m! \frac{T^m}{ \kappa^m}.
\end{align*}
Therefore,
$$
\E \exp \bigg\{ \check{\kappa}_\zs{2} \int^T_\zs{t} \xi_\zs{t, u}^2  \d u \bigg\}
					=
					\sum^{\infty}_\zs{m=0} \frac{\check{\kappa}_\zs{2}^m}{m!}  \E  \bigg( \int^T_\zs{t} \xi_\zs{t, u}^2  \d u \bigg)^m
					\le 
					 \sum^\infty_\zs{m=0} \bigg( \frac{\check{\kappa}_\zs{2}}{\kappa} T \bigg)^m .
$$
In view of the definition of $T_\zs{0}$ in \cref{eq: T0} we obtain that the condition 
$T < T_\zs{0}$ implies that $T< \kappa/ \check{\kappa}_\zs{2} $, i.e. this series is finite.
Moreover, by \cref{Props: Kab-Perg} , we have that
$
\E \xi_\zs{t, \tau} 
			\le 
			m ! (2 \sigma^2 T)^m 
$ 
for all $ m \ge 1$,
and for any $\tau \in \cM_\zs{t}$,
So
\begin{align*}
\E_\zs{\varsigma, t} \exp \{ q \check{\delta}_\zs{1} \xi_\zs{t, \tau}^2 \}
			&= 
1+ \sum_\zs{m = 1}^\infty \frac{(q \check{\delta}_\zs{1})^m }{m !}  \E \xi^m_\zs{t, \tau}
			\\
			& \le 
1+ \sum_\zs{m = 1}^\infty \frac{(q \check{\delta}_\zs{1})^m }{m !}  m! (2 \sigma^2 T)^m 		
			 \le 
1+ \sum_\zs{m = 1}^\infty ( 2  q \check{\delta}_\zs{1}  \sigma^2 T)^m 
				< + \infty ,
\end{align*}
for $T< 1/ 2 q \sigma^2 \check{\delta}_\zs{1}= \gamma (1-\gamma)/ (3+\gamma) (1+ \gamma) \sigma^2 g(0)$, which is true due to the condition
$T<T_\zs{0}$.
Hence \cref{sec: verif 1}. \fdem

\subsection{The smoothness properties for the function $\E Q(\eta^{s, t}_\zs{u}, u)$}
\label{E Q}
\begin{lemma}\label{Lem: Prob.sol}
For any bounded function $Q$ in $\cX$ and for $u>t$
$$
\left| \frac{\partial }{\partial s} \int^T_\zs{t} \E Q(\eta^{s, t}_\zs{u}, u)  \d u \right| 
					\le 
					Q^*_\zs{t} \frac{2 }{\sigma} \sqrt{\frac{2(T-t)}{ \pi }}  
$$
where 
$$
Q^*_\zs{t}  =  \sup_{\substack{ s \in \mathbb{R} \\[1mm] t  \leq  u  \leq  T  }} \left| Q(s,u) \right|.
$$ 
\end{lemma}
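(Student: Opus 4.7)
The plan is to exploit the Gaussian law of $\eta^{s,t}_u$ to differentiate under the integral, and then to reduce the whole estimate to a direct bound on the ratio $\mu(u,t)/\sigma_1(u,t)$. Starting from
\[
\E Q(\eta^{s,t}_u,u) \;=\; \int_\bbr Q(z,u)\,\varphi(s,t,z,u)\,\d z,
\]
I would invoke \cref{eq:varrho} and \cref{eq:varphi} to write
\[
\frac{\partial}{\partial s}\,\E Q(\eta^{s,t}_u,u) \;=\; \int_\bbr Q(z,u)\,\varrho(s,t,z,u)\,\d z \;=\; \frac{\mu(u,t)}{\sigma_1(u,t)}\int_\bbr Q(z,u)\,K\,\varphi\,\d z.
\]
Passing to the standard-normal variable $\xi=K=(z-s\mu)/\sigma_1$ and bounding $|Q|\le Q^{*}_t$ yields the pointwise estimate $|\partial_s \E Q(\eta^{s,t}_u,u)| \le Q^{*}_t\,(\mu(u,t)/\sigma_1(u,t))\sqrt{2/\pi}$, since $\E|\xi|=\sqrt{2/\pi}$.

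Integrating in $u$ would then reduce the lemma to proving $\int_t^T \mu(u,t)/\sigma_1(u,t)\,\d u \le 2\sqrt{T-t}/\sigma$. I would actually establish the stronger pointwise inequality
\[
\frac{\mu(u,t)}{\sigma_1(u,t)} \;\le\; \frac{1}{\sigma\sqrt{u-t}}.
\]
Using the identity $\sigma_1^2(u,t)=\sigma^2\int_t^u \mu(u,z)^2\,\d z$ from \cref{eqn:mu&sigma}, this is equivalent to $\mu(u,t)^2(u-t)\le \int_t^u \mu(u,z)^2\,\d z$, which follows at once if $\mu(u,z)\ge \mu(u,t)$ for every $z\in[t,u]$. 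Because $\mu(u,z)=\exp\bigl(\int_z^u g_1(\nu)\,\d\nu\bigr)$ and the exponent increases toward $0$ as $z\uparrow u$ exactly when $g_1\le 0$, the desired monotonicity is equivalent to the sign condition $g_1\le 0$ on $[t,u]$.

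The main obstacle is therefore the sign analysis $g_1(t)=\gamma_1 g(t)-\gamma_2\le 0$ on $[0,T]$. From the explicit formula \cref{funct_g}, $g(t)$ is obtained from $\check\gamma_2-\vartheta$ by subtracting the nonnegative fraction $2\vartheta(\check\gamma_2-\vartheta)/[e^{\omega(T-t)}(\check\gamma_2+\vartheta)-(\check\gamma_2-\vartheta)]$, whose denominator is bounded below by $2\vartheta>0$; consequently
\[
g(t)\;\le\;\check\gamma_2-\vartheta\;<\;\check\gamma_2\;=\;\gamma_2/\gamma_1 \qquad (t\in[0,T]),
\]
so $g_1(t)<0$ throughout. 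Combining with the previous step and integrating $\int_t^T \d u/\sqrt{u-t}=2\sqrt{T-t}$ gives
\[
\left|\frac{\partial}{\partial s}\int_t^T\E Q(\eta^{s,t}_u,u)\,\d u\right| \;\le\; Q^{*}_t\sqrt{\frac{2}{\pi}}\cdot\frac{2\sqrt{T-t}}{\sigma} \;=\; Q^{*}_t\,\frac{2}{\sigma}\sqrt{\frac{2(T-t)}{\pi}},
\]
as claimed. The differentiation under the integral is routine given the Gaussian density, and the only delicate point in the whole argument is the verification of $g_1\le 0$ via the explicit formula for $g$.
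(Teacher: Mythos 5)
Your proof follows essentially the same route as the paper's: differentiate the Gaussian representation under the integral sign, change variables to reduce the inner integral to $\E|\xi|=\sqrt{2/\pi}$, bound the resulting factor $\mu(u,t)/\sigma_{1}(u,t)$ by $1/(\sigma\sqrt{u-t})$, and integrate in $u$. The only place you go beyond the paper is in actually verifying that ratio bound, via the monotonicity of $z\mapsto\mu(u,z)$ and the sign condition $g_{1}\le 0$ deduced from the explicit formula for $g$ — a step the paper merely asserts (with a sign slip in the displayed exponent of $\mu$) — and your verification is correct.
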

\begin{proof}
By Fubini theorem if a function $Q>0$ then 
$$
\frac{\partial}{ \partial s} \E \int^T_\zs{t} Q( \eta^{s, t}_\zs{u}, u)  \d u =  \frac{\partial}{ \partial s} \int^T_\zs{t}  \E Q(\eta^{s, t}_\zs{u}, u)  \d u
$$
As
$$
\E  Q(\eta^{s, t}_\zs{u}, u)  =  \frac{1}{\sigma_\zs{1}(u,t)} \int_{\mathbb{R}} Q(y, u) \varphi \Big( \frac{y- s\mu(u, t)}{\sigma_\zs{1}(u,t)}\Big)  \d y \,,
$$
where 
$$
\varphi(\theta) = \frac{1}{\sqrt{2 \pi}} \me^{-\frac{\theta^2}{2} } \quad \mbox{and} \quad \theta = \frac{z- x\mu(u,t)}{\sigma_\zs{1}(u, t)} \,.
$$
Then we have that, 
$$
\E  Q( \eta^{s, t}_\zs{u}, u) =  \frac{1}{\sqrt{2 \pi} \sigma_\zs{1}(u,t)} \int_{\mathbb{R}} Q(y, u) \exp \bigg\{ - \frac{(y- s\mu(u,t))^2}{2 \sigma_\zs{1}^2 (u,t)} \bigg\}  \d y \,.
$$
Thus by deriving the last expression with respect to $s$,
$$
\frac{\partial}{\partial s}\E Q( \eta^{s, t}_\zs{u}, u) =  \frac{\mu(u,t)}{\sqrt{2 \pi} \sigma_\zs{1}^3(u,t)} \int_{\mathbb{R}} Q(y, u) (y-s \mu(u,t))\exp \bigg\{ - \frac{(y- s \mu(u,t))^2}{2 \sigma_\zs{1}^2 (u,t)} \bigg\}  \d y \,.
$$
Then by letting 
$$
v = \frac{y-s \mu(u,t)}{ \sigma_\zs{1} (u,t)} \,.
$$
$$
 \frac{\partial}{\partial s}\E Q(\eta^{s, t}_\zs{u}, u)  =  \frac{\mu(u,t)}{\sqrt{2 \pi} \sigma_\zs{1}(u,t)} \int_{\bbr}  Q( s \mu(u,t)+ v \sigma_\zs{1}(u,t), u) v \me^{-v^2/2}  \d v.
$$
By taking the absolute value for both sides we get
\begin{align*}
| \frac{\partial}{\partial s}\E Q(\eta^{s, t}_\zs{u}, u) |  & \le  \frac{\mu(u,t)}{\sqrt{2 \pi} \sigma_\zs{1}(u,t)} \int_{\bbr}  | Q( s \mu(u,t)+ v \sigma_\zs{1}(u,t), u) | |v| \me^{-v^2/2}  \d v \,,
\\
& \le  Q^*_\zs{t}  \frac{\mu(u,t)}{\sqrt{2 \pi} \sigma_\zs{1}(u,t)} \int_{\bbr}   |v| \me^{-v^2/2}  \d v \,,
\\
& = Q_\zs{t}^* \sqrt{\frac{2}{\pi}}  \frac{ \mu(u,t)}{ \sigma(u,t) \big( \int_\zs{t}^u \mu^2(u,z) \d z \big)^{1/2}} \,,
\end{align*}
where $Q_\zs{t}^*=  \sup_\zs{y \in \bbr, u \ge t} |Q(y,u)|$.
Therefore, 
\begin{align*}
| \frac{\partial}{\partial s}\E Q(\eta^{s, t}_\zs{u}, u) |  & \le  Q_\zs{t}^*\sqrt{\frac{2}{\pi}}  \frac{ \mu(u,t)}{ \sigma_\zs{1}(u,t)} \,.
\end{align*}
Since the integral  
\begin{align*}
\int^T_\zs{t} \frac{\mu (u,t)}{\sigma_\zs{1}(u,t)}  \d u & =  \int^T_\zs{t} \frac{\me^{-\int^u_\zs{t} g_\zs{1}(v)  \d v}}{ \sigma \sqrt{\int^u_\zs{t} \me^{-2 \int^u_z g_\zs{1}(v)  \d v}  \d z}}  \d u 
							\\
							& \leq 
							 \int^T_\zs{t} \frac{1}{\sigma \sqrt{u-t}}  \d u
							 \;  = 
							  \frac{2}{\sigma}   \sqrt{T-t}.
\end{align*}
Therefore, 
\begin{align} \label{Apx: 4}
\begin{split}
\Bigg| \frac{\partial}{\partial s}\E Q(\eta^{s, t}_\zs{u}, u)\Bigg| 
							 \leq  Q_\zs{t}^* \sqrt{\frac{2}{ \pi}}  \frac{1}{ \sigma \sqrt{u-t}} \,.
\end{split}
\end{align}
Therefore by taking the integral from $t$ to $T$,
\begin{align} \label{Prob. sol.01}
\bigg| \frac{\partial}{\partial s}\int^T_\zs{t} \E  Q( \eta^{s, t}_\zs{u}, u)  \d u \bigg|  
						\le  Q^*_\zs{t}  \sqrt{\frac{2}{ \pi}}   \int^T_\zs{t} \frac{1}{ \sigma \sqrt{u-t}}  \d u
						\le Q^*_\zs{t} \frac{2 }{\sigma} \sqrt{\frac{2(T-t)}{ \pi }}     \,.
\end{align}
Hence \cref{Lem: Prob.sol}.
\end{proof}
\bigskip

We need the following Proposition  from \cite{KabanovPergamenshchikov2003}.
\begin{proposition}\label{Props: Kab-Perg}
Let $y$ be the solution to the following equation
\begin{equation} \label{eqn: lemma 1.1.1}
 \d y_\zs{t} =  f(y_\zs{t}, t)  \d t + G_\zs{t}  \d W_\zs{t}, \qquad y_\zs{0} = 0 .
\end{equation}
where  $y f(y, t) \le - \kappa y^2$ for all $y$ and for $\kappa >0$, and 
$$
\sup_\zs{0 \le t \le T} |G_\zs{t}| 
					\le 
					M,
					\quad
					\mbox{for all} 
					\quad
					M>0.				
$$
Then for any stopping time $0 \le \tau \le T$
$$
\E  |y_\zs{\tau}|^{2m}  \leq   m!  (2  M^2  T)^m .
$$
\end{proposition}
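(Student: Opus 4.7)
The plan is to apply It\^o's formula to $y_t^{2m}$ and induct on $m\ge 0$. The base case $m=0$ is trivial since both sides equal $1$.

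For the inductive step ($m\ge 1$), the key observation is dissipativity: since $yf(y,t)\le -\kappa y^2$ and $y^{2m-2}\ge 0$, we have $2m\,y^{2m-1}f(y,t)=2m\,y^{2m-2}\cdot yf(y,t)\le -2m\kappa\,y^{2m}\le 0$, so the $f$-drift produced by It\^o is non-positive and may be discarded. Applying It\^o's formula gives
\[
dy_t^{2m}=2m\,y_t^{2m-1}f(y_t,t)\,dt+m(2m-1)\,y_t^{2m-2}G_t^2\,dt+2m\,y_t^{2m-1}G_t\,dW_t.
\]
Introducing the localization $\tau_n=\tau\wedge\inf\{t\ge 0:|y_t|\ge n\}$ so that the stochastic integral becomes a true martingale on $[0,\tau_n]$, taking expectations, dropping the non-positive $f$-drift, and using $|G_t|\le M$ together with Fubini yields
\[
\E\, y_{\tau\wedge\tau_n}^{2m}\le m(2m-1)M^2\int_0^T \E\, y_s^{2m-2}\,ds.
\]

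The induction hypothesis applied at the constant (hence stopping) time $s$ gives $\E\, y_s^{2m-2}\le(m-1)!\,(2M^2 s)^{m-1}$. The crucial point is that integrating $s^{m-1}$ produces the sharp factor $T^m/m$, not the crude $T^m$ that one would get from a mere sup bound:
\[
\E\, y_{\tau\wedge\tau_n}^{2m}\le m(2m-1)M^2(m-1)!(2M^2)^{m-1}\cdot\frac{T^m}{m}=\frac{2m-1}{2m}\,m!\,(2M^2T)^m\le m!\,(2M^2T)^m.
\]
Removing the localization by Fatou's lemma, $\E\, y_\tau^{2m}\le\liminf_n \E\, y_{\tau\wedge\tau_n}^{2m}\le m!\,(2M^2T)^m$, which closes the induction.

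The only technical delicacy I anticipate is the bookkeeping around localization, specifically the justification that $y_{\tau\wedge\tau_n}\to y_\tau$ almost surely (guaranteed by non-explosion, which itself follows from the dissipativity $yf(y,t)\le -\kappa y^2$ and boundedness of $|G|$) and that the truncated moments remain finite at each inductive stage. Once the sign of the It\^o drift is recognized, the essential arithmetic is simply $m(2m-1)/m=2m-1\le 2m$, which absorbs the integration constant and yields exactly the claimed bound; notably, the parameter $\kappa$ does not appear in the constant because we discard rather than exploit the drift.
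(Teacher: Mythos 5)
Your proof is correct. Note, however, that the paper does not prove this proposition at all: it is imported verbatim from the reference [Kabanov--Pergamenshchikov, 2003], so there is no in-paper argument to compare against. Your derivation --- It\^o's formula applied to $y_t^{2m}$, discarding the drift term via the dissipativity $y f(y,t)\le -\kappa y^2$, bounding the quadratic-variation term by $M^2$, localizing so the stochastic integral is a true martingale, and closing the induction by integrating the hypothesis $\E\, y_s^{2m-2}\le (m-1)!(2M^2 s)^{m-1}$ to pick up the factor $T^m/m$ --- is a complete and self-contained proof, and you correctly identify the one place where care is needed: the crude uniform-in-$s$ bound would give $(2m-1)\,m!\,2^{m-1}M^{2m}T^m$, which exceeds the target for $m\ge 2$, so the sharp $s$-dependent induction hypothesis (i.e., running the statement with horizon $s$ rather than $T$) is genuinely required. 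The removal of the localization by Fatou, justified by non-explosion of the assumed solution on $[0,T]$, is also handled properly. The only cosmetic remark is that the constant $\kappa$ plays no quantitative role beyond guaranteeing the drift has the right sign, which is consistent with its absence from the stated bound.
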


\section*{Acknowledgments}
 This research
  is partly supported by the Ministry of Education and Science of the Russian Federation in the framework of the research project no. 2.3208.2017/4.6
   and by  the RSF grant 17-11-01049 (National Research Tomsk State University).

\bigskip

\newpage

\end{document}